\DeclarePairedDelimiter\floor{\lfloor}{\rfloor}
\definecolor{black}{cmyk}{1.,1.,1.,1.0}
\definecolor{blue}{cmyk}{1.,1.,0.,0.63}
\definecolor{green}{cmyk}{1.,0.,1.,0.63}
\newcommand{\red}{\textcolor{red}}
\DeclareMathOperator{\der}{d}
\DeclareMathOperator{\ord}{ord}
\newcommand{\vf}{\vfill\end{document}}
\newcommand{\explain}[1]{\text{\scriptsize\sf [#1]}}
\newtheorem{thm}{Theorem}[section]
\newtheorem{lem}{Lemma}[section]
\newtheorem{claim}{Claim}[section]
\theoremstyle{plain}
\numberwithin{equation}{section}
\newcommand{\thistheoremname}{}
\newtheorem*{genericthm*}{\thistheoremname}
\newenvironment{namedthm*}[1]{\renewcommand{\thistheoremname}{#1}%
\begin{genericthm*}}
{\end{genericthm*}}
\newtheoremstyle{named}{}{}{\itshape}{}{\bfseries}{.}{.5em}{\thmnote{#3's }#1}
\theoremstyle{named}
\title{EXAMPLES OF HYPERBOLIC HYPERSURFACES
\\
OF LOW DEGREE
\\
IN PROJECTIVE SPACES}
\providecommand{\keywords}[1]{\textbf{\textit{Keywords:}} #1}
\author{Dinh Tuan HUYNH}
\newcommand{\Addresses}{{
\bigskip
\footnotesize
\textsc{Dinh Tuan Huynh, Laboratoire de Math\'{e}matiques d'Orsay, Univ. Paris-Sud, CNRS, Universit\'{e} Paris-Saclay, 91405 Orsay, France.}\\
\textsc{Department of Mathematics, College of Education, Hue University, 34 Le Loi St., Hue City, Vietnam.}
\par\nopagebreak
\textit{E-mail address}: \texttt{dinh-tuan.huynh@math.u-psud.fr}}}
\date{\vspace{-5ex}}
\begin{document}
\maketitle
\begin{abstract}
We construct families of hyperbolic hypersurfaces of degree $2n$ in the projective space $\mathbb{P}^n(\mathbb{C})$ for $3\leq n\leq 6$.
\end{abstract}
\keywords{Kobayashi conjecture}, {hyperbolicity}, {Brody Lemma}, {Nevanlinna Theory}

\section{Introduction and the main result}

The Kobayashi conjecture states that a generic hypersurface $X_d\subset\mathbb{P}^n(\mathbb{C})$ of degree $d\geq 2n-1$ is hyperbolic. It is proved by Demailly and El Goul \cite{demailly_goul2000} for $n=3$ and a very generic surface of degree at least $21$.  In \cite{mihaipaun2008}, P\u{a}un improved the degree to $18$. In $\mathbb{P}^4(\mathbb{C})$, Rousseau \cite{Rousseau2007} was able to show that a generic three-fold of degree at least $593$ contains no Zariski-dense entire curve, a result from which hyperbolicity follows, after removing divisorial components \cite{Diverio-Trapani2010}. In $\mathbb{P}^n(\mathbb{C})$, for any $n$ and for $d \geq2^{(n-1)^5}$, Diverio, Merker and Rousseau \cite{DMR2010} established algebraic degeneracy of entire curves in $X_d$. Also, Siu \cite{siu2015} proposed a positive answer for arbitrary $n$ and degree $d=d(n)\gg 1$ very large. Most recently, Demailly \cite{demailly2015proof} has announced a strategy that is expected to attain Kobayashi's conjecture for {\em very} generic hypersurfaces of degree $d\geq2n$.

Concurrently, many authors tried to find examples of hyperbolic hypersurfaces of degree as low as possible. The first example of a {\em compact} Kobayashi hyperbolic manifold of dimension $2$ is a hypersurface in $\mathbb{P}^3(\mathbb{C})$ constructed by Brody and Green \cite{Brody-Green1977}. Also, the first examples in all higher dimensions $n-1\geq 3$ were discovered by Masuda and Noguchi \cite{masuda_noguchi1996}, with degree large. So far, the best degree asymptotic is the square of dimension, given by Siu and Yeung \cite{siu_yeung1997} with $d=16(n-1)^2$ and by Shiffman and Zaidenberg \cite{shiffman_zaidenberg2002_pn} with $d=4(n-1)^2$. In $\mathbb{P}^3(\mathbb{C})$ many examples of low degree were given (see the reference of \cite{zaidenberg2003hyperbolic}). The lowest degree found up to date is $6$, given by Duval \cite{duval2004}. There are not so many examples of low degree hyperbolic hypersurfaces in $\mathbb{P}^4(\mathbb{C})$. We mention here an example of a hypersurface of degree $16$ constructed by Fujimoto \cite{fujimoto2001}. Various examples in $\mathbb{P}^5(\mathbb{C})$ and $\mathbb{P}^6(\mathbb{C})$ only appear in the cases of arbitrary dimension mentioned above.

Before going to introduce the main result, we need some notations and conventions. A family of hyperplanes $\{H_i\}_{1\leq i\leq q}$ with $q\geq n+1$ in $\mathbb{P}^n(\mathbb{C})$ is said to be in \textsl{general position} if any $n+1$ hyperplanes in this family have empty intersection. A hypersurface $S$ in $\mathbb{P}^n(\mathbb{C})$ is said to be in \textsl{general position with respect to $\{H_i\}_{1\leq i\leq q}$} if it avoids all intersection points of $n$ hyperplanes in $\{H_i\}_{1\leq i\leq q}$, namely if:
\[
S\cap\big(\cap_{i\in I}H_i\big)=\emptyset,\quad\forall I\subset \{1,\ldots, q\}, |I|=n.
\]

Now assume that $\{H_i\}_{1\leq i\leq q}$ is a family of hyperplanes of $\mathbb{P}^n(\mathbb{C})$ ($n\geq 2$) in general position. Let $\{H_i\}_{i\in I}$ be a subfamily of $n+2$ hyperplanes. Take a partition $I=J\cup K$ such that $|J|,|K|\geq 2$. Then there exists a unique hyperplane $H_{JK}$ containing $\cap_{j\in J}H_j$ and $\cap_{k\in K}H_k$. We call $H_{JK}$ a \textsl{diagonal hyperplane} of $\{H_i\}_{i\in I}$. The family $\{H_i\}_{1\leq i\leq q}$ is said to be \textsl{generic} if, for all disjoint subsets $I,J,J_1,\ldots,J_k$ of $\{1,\ldots,q\}$ such that $|I|, |J_i|\geq 2$ and $|I|+|J_i|=n+2$, $1\leq i\leq k$, for every subset $\{i_1,\ldots,i_l\}$ of $I$, the intersection between the $|J|$ hyperplanes $H_j,j\in J$, the $k$ diagonal hyperplanes $H_{IJ_1},\ldots,H_{IJ_k}$, and the $l$ hyperplanes $H_{i_1},\ldots,H_{i_l}$ is a linear subspace of codimension $\min\{k+l,|I|\}+|J|$, with the convention that when $\min\{k+l,|I|\}+|J|>n$, this intersection is empty. Such a generic condition naturally appears in our constructions,  and it has the virtue of being preserved when passing to smaller-dimensional subspaces\\

 Our aim in this article is to prove that, for $3\leq n\leq 6$, a small deformation of a union of generic $2n$ hyperplanes in $\mathbb{P}^n(\mathbb{C})$ is hyperbolic.
 
\begin{namedthm*}{Main Theorem}\label{maintheorem}
Let $n$ be an integer number in $\{3,4,5,6\}$. Let $\{H_i\}_{1\leq i\leq 2n}$  be a family of $2n$ generic hyperplanes in $\mathbb{P}^n(\mathbb{C})$, where $H_i=\{h_i=0\}$. Then there exists a hypersurface $S=\{s=0\}$ of degree $2n$ in general position with respect to $\{H_i\}_{1\leq i\leq 2n}$ such that the hypersurface 
\[
\Sigma_{\epsilon}
=
\big\{\epsilon s+\Pi_{i=1}^{2n}h_i=0\big\}
\]
is hyperbolic for sufficiently small complex $\epsilon\not=0$.
\end{namedthm*}

Our proof is based on the technique of Duval \cite{duval2014} in the case $n=3$. By the deformation method of Zaidenberg and Shiffman\cite{shiffman_zaidenberg2005}, the problem reduces to finding a hypersurface $S$ such that all complements of the form
\[ 
\cap_{i\in I}H_i\setminus\big(\cup_{j\not\in I}H_j\setminus S\big)
\]
are hyperbolic. This situation is very close to Theorem~\ref{fujimoto-green}. To create such $S$, we proceed by deformation in order to allow points of intersection of $S$ with more and more linear subspaces coming from the family $\{H_i\}_{1\leq i\leq 2n}$.

\section*{Acknowledgments}
This is a part of my Ph.D. thesis at Universit\'{e} Paris-Sud. I would like to gratefully thank my thesis advisor Julien Duval for his support, his very careful readings and many inspiring discussions on the subject. I am specially thankful to my thesis co-advisor Jo\"{e}l Merker for his encouragements, his prompt helps in {\LaTeX} and his comments that greatly improved the manuscript. I would like to thank Junjiro Noguchi for his bibliographical help. Finally, I acknowledge support from Hue University - College of Education.

\section{Notations and preparation}
\subsection{Brody Lemma}
Let $X$ be a compact complex manifold equipped with a hermitian metric $\| \cdot\|$. By an \textsl{entire curve} in $X$ we mean a nonconstant holomorphic map $f:\mathbb{C}\rightarrow X$. A \textsl{Brody curve} in $X$ is an entire curve $f:\mathbb{C}\rightarrow X$ such that $\| f'\|$ is bounded. Brody curves arise as limits of sequences of holomorphic maps as follows (see \cite{brody1978}).

\begin{namedthm*}{Brody Lemma}
\label{brodylemma}
Let $f_n:\mathbb{D}\rightarrow X$ be a sequence of holomorphic maps from the unit disk to a compact complex manifold $X$. If $\|f_n'(0)\|\rightarrow\infty$ as $n\rightarrow \infty$, then there exist a point $a\in\mathbb{D}$, a sequence $(a_n)$ converging to $a$, and a decreasing sequence $(r_n)$ of positive real numbers converging to 0 such that the sequence of maps
\[
z\rightarrow f_n(a_n+ r_n z)
\]
converges toward a Brody curve, after extracting a subsequence.
\end{namedthm*}

From an entire curve in $X$, the Brody Lemma also produces a Brody curve in $X$. A second consequence is a well-known characterization of Kobayashi hyperbolicity.

\begin{namedthm*}{Brody Criterion}\label{brody_criterion}
A compact complex manifold $X$ is Kobayashi hyperbolic if and only if it contains no entire curve (or no Brody curve).
\end{namedthm*}

We shall repeatedly use the Brody Lemma under the following form.

\begin{namedthm*}{Sequences of entire curves}\label{lemma brody_use}

Let $X$ be a compact complex manifold and let $(f_n)$ be a sequence of entire curves in $X$. Then there exist a sequence of reparameterizations $r_n:\mathbb{C}\rightarrow\mathbb{C}$ and a subsequence of $(f_n\circ r_n)$ which converges towards an entire curve (or Brody curve).
\end{namedthm*}

\subsection{Nevanlinna theory and some applications}
We recall some facts from Nevanlinna theory in the projective space $\mathbb{P}^n(\mathbb{C})$. Let $E=\sum\mu_{\nu}a_{\nu}$ be a divisor on $\mathbb{C}$ and let
$k\in \mathbb{N}\cup\{\infty\}$. Summing the $k$-truncated degrees of the divisor on disks by
\[
n^{[k]}(t,E)
:=
\sum_{|a_{\nu}|<t}
\min
\,
\{k,\mu_{\nu}\}\quad\quad 
{\scriptstyle (t\,>\,0)},
\]
the \textsl{truncated counting function at level} $k$ of $E$ is defined by
\[
N^{[k]}(r,E)
\,
:=
\,
\int_1^r \frac{n^{[k]}(t, E)}{t}\,\der t\quad \quad
{\scriptstyle (r\,>\,1)}.
\]
When $k=\infty$, we write $n(t,E)$, $N(r,E)$ instead of $n^{[\infty]}(t,E)$, $N^{[\infty]}(r,E)$. We denote the zero divisor of a nonzero meromorphic function $\varphi$ by $(\varphi)_{0}$. Let $f\colon\mathbb{C}\rightarrow \mathbb{P}^n(\mathbb{C})$ be an entire curve having a reduced representation $f=[f_0:\cdots:f_n]$ in the homogeneous coordinates $[z_0:\cdots:z_n]$ of $\mathbb{P}^n(\mathbb{C})$. Let $D=\{Q=0\}$ be a hypersurface in $\mathbb{P}^n(\mathbb{C})$ defined by a homogeneous polynomial $Q\in\mathbb{C}[z_0,\dots,z_n]$ of degree $d\geq 1$. If $f(\mathbb{C})\not\subset D$, we define the \textsl{truncated counting function} of $f$ with respect to $D$ as
\[
N_f^{[k]}(r,D)
\,
:=
\,
N^{[k]}\big(r,(Q\circ f)_0\big).
\]
The \textsl{proximity function} of $f$ for the divisor $D$ is defined as
\[
m_f(r,D)
\,
:=
\,
\int_0^{2\pi}
\log
\frac{\big\Vert f(re^{i\theta})\big\Vert^d\,
\Vert Q\Vert}{\big|Q(f)(re^{i\theta})\big|}
\,
\frac{\der\theta}{2\pi},
\]
where $\Vert Q\Vert$ is the maximum  absolute value of the coefficients of $Q$ and
\[
\big\Vert f(z)\big\Vert
\,
=
\,
\max
\,
\{|f_0(z)|,\ldots,|f_n(z)|\}.
\]
Since $\big|Q(f)\big|\leq \Vert Q\Vert\cdot\Vert f\Vert^d$, one has $m_f(r,D)\geq 0$. Finally, the \textsl{Cartan order function} of $f$ is defined by
\[
T_f(r)
\,
:=
\,
\frac{1}{2\pi}\int_0^{2\pi}
\log
\big\Vert f(re^{i\theta})\big\Vert \der\theta.
\]

It is known that \cite{eremenko2010} if $f$ is a Brody curve, then its order
\[
\rho_f
\,
:=
\,
\limsup_{r\rightarrow +\infty}\dfrac{T_f(r)}{\log r}
\]
 is bounded from above by 2. Furthermore, Eremenko \cite{eremenko2010} showed the following.
 
\begin{thm}
\label{omit n hyperplanes}
If $f\colon\mathbb{C}\rightarrow \mathbb{P}^n(\mathbb{C})$ is a Brody curve omitting $n$ hyperplanes in general position, then it is of order 1.
\end{thm}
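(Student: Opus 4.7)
The strategy is to reduce to a coordinate-hyperplane configuration, rewrite the Brody condition as a pointwise PDE inequality for a K\"ahler potential, and then exploit the nowhere-vanishing coordinate functions to upgrade the naive bound $T_f(r)=O(r^2)$ to a linear one.

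\textit{Step 1 (Normalization).} After a projective linear change of coordinates in $\mathbb{P}^n(\mathbb{C})$, I may assume that the $n$ omitted hyperplanes are the coordinate hyperplanes $H_i=\{z_i=0\}$ for $i=1,\dots,n$. In a reduced representation $f=[f_0:f_1:\cdots:f_n]$, the hypothesis then forces $f_1,\dots,f_n$ to be nowhere-vanishing entire functions; hence $f_i=e^{g_i}$ for some entire $g_i$, while $f_0$ is an arbitrary entire function.

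\textit{Step 2 (Brody as a PDE inequality).} Dividing by $e^{g_n}$, represent $f$ in the affine chart $\{z_n\neq 0\}$ by
\[
(h_0,\,e^{a_1},\,\dots,\,e^{a_{n-1}}),\qquad h_0:=f_0\,e^{-g_n},\quad a_j:=g_j-g_n\quad(1\le j\le n-1),
\]
where $h_0$ and each $a_j$ are entire. Setting
\[
U(z)\;:=\;\log\!\Bigl(1+|h_0(z)|^2+\sum_{j=1}^{n-1}e^{2\mathrm{Re}\,a_j(z)}\Bigr),
\]
Jensen's formula gives $T_f(r)=\tfrac1{2\pi}\!\int_0^{2\pi}U(re^{i\theta})\,d\theta+O(1)$, and the Brody condition $\sup_{\mathbb{C}}\|f'\|_{FS}<\infty$ is exactly the pointwise inequality $\Delta U\le C$ on $\mathbb{C}$.

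\textit{Step 3 (Linear growth).} A direct integration of $\Delta U\le C$ via the Jensen--Riesz formula only yields $T_f(r)=O(r^2)$, which recovers the general Brody bound $\rho_f\le 2$ recalled above. To win the missing power of $r$, I would partition $\mathbb{C}$ according to which summand in $U$ dominates. On the region where $e^{2\mathrm{Re}\,a_j}$ is largest, $U$ is close to the harmonic function $2\mathrm{Re}\,a_j$; direct computation of $\|f'\|_{FS}^2$ shows that the leading-order terms cancel, and the Brody bound then reduces to a uniform bound on $|a_j'|$ throughout that region. On the region where $|h_0|^2$ dominates, an analogous cancellation controls $|h_0'/h_0|$ away from zeros of $h_0$ and, via Jensen, the counting function $N(r,1/h_0)$. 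Patching these regional estimates shows that each $a_j$ has globally bounded derivative, hence is affine by Liouville, and that $h_0$ has order at most $1$; so $T_f(r)=O(r)$, i.e.\ $\rho_f\le 1$. The matching lower bound $\rho_f\ge 1$ comes from the classical fact that a non-constant entire function with bounded spherical derivative omitting a value has order at least $1$.

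\textit{Main obstacle.} The technical core is Step 3, specifically the behavior on transition strips where several summands in $U$ are of comparable size: there the harmonic approximation breaks down and the cancellation in $\|f'\|_{FS}^2$ is delicate. The hypothesis of exactly $n$ omitted hyperplanes is what makes the regional analysis close; with fewer coordinates in exponential form the transition regions proliferate and cannot be patched.
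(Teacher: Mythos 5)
The paper does not actually prove this statement: it is quoted from Eremenko \cite{eremenko2010}, whose argument is a potential--theoretic generalization of the Clunie--Hayman theorem (an entire function with bounded spherical derivative satisfies $\log M(r)=O(r)$). So there is no in-paper proof to match your attempt against; judged on its own, your Steps 1--2 are fine and standard, but Step 3 --- which is where the whole theorem lives --- rests on a pointwise claim that is false. Writing $f=(h_0,e^{a_1},\dots,e^{a_{n-1}},1)$, one has
\[
\|f'\|_{FS}^2\,=\,\frac{1}{\|f\|^4}\Bigl(\textstyle\sum_j e^{2\mathrm{Re}\,a_j}|a_j'|^2+\sum_j e^{2\mathrm{Re}\,a_j}\,|h_0a_j'-h_0'|^2+\sum_{j<k}e^{2\mathrm{Re}(a_j+a_k)}|a_j'-a_k'|^2+|h_0'|^2\Bigr),
\]
and on the region where $e^{2\mathrm{Re}\,a_j}$ dominates $\|f\|^2$ this is comparable to $e^{-2\mathrm{Re}\,a_j}\bigl(|a_j'|^2+\cdots\bigr)$. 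The Fubini--Study metric degenerates precisely where one summand is huge, so the Brody bound there yields only $|a_j'|\le C\,e^{\mathrm{Re}\,a_j}$, not a uniform bound on $|a_j'|$; the ``cancellation of leading-order terms'' you invoke works against you, not for you. Consequently the assertion that each $a_j$ has globally bounded derivative (hence is affine by Liouville) is unsupported --- it is true, but only as a \emph{consequence} of the theorem (cf.\ Theorem~\ref{omit-hyperplanes}), not as a step toward it via regional pointwise estimates. The same defect appears already at $n=1$, where your sketch would have to reprove Clunie--Hayman: there $\|f'\|^2\approx|h_0'/h_0|^2/|h_0|^2$ where $|h_0|$ is large, which controls nothing, and the actual proof requires a global argument (bounded Riesz mass of $\log\|f\|$ plus harmonic-majorant or Wiman--Valiron type estimates). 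You correctly identify the transition regions as the obstacle, but the gap is not only there.

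Two smaller points. First, the needed conclusion (and what Eremenko proves) is the upper bound $T_f(r)=O(r)$; your final claim that $\rho_f\ge1$ always holds is both unnecessary and false in general --- a rational Brody curve such as $[1:1:z]$ omits the two coordinate hyperplanes $\{z_0=0\}$, $\{z_1=0\}$ of $\mathbb{P}^2(\mathbb{C})$, which are in general position, and has order $0$. Second, if you want a self-contained proof you should follow Eremenko's route: set $u=\log\|f\|$, note that $\log|f_j|$ is harmonic for each omitted coordinate while $\Delta u$ has uniformly bounded density, and run the argument at the level of the counting/characteristic functions rather than pointwise on $\mathbb{C}$.
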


Consequently, we have the following theorem.
 
\begin{thm}
\label{omit-hyperplanes}
If $f\colon\mathbb{C}\rightarrow\mathbb{P}^n(\mathbb{C})$ is a Brody curve avoiding the first $n$ coordinate hyperplanes $\{z_i=0\}_{i=0}^{n-1}$, then it has a reduced representation of the form
\[
[1:e^{\lambda_1\,z+\mu_1}:\cdots:e^{\lambda_{n-1}\, z+\mu_{n-1}}:g],
\]
where $g$ is an entire function and $\lambda_i$, $\mu_i$ are constants. If $f$ also avoids the remaining coordinate hyperplane $\{z_n=0\}$, then $g$ is of the form $e^{\lambda_n\,z+\mu_n}$.
\end{thm}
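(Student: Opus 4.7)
My plan is to combine the order bound from Theorem~\ref{omit n hyperplanes} with Hadamard's factorization theorem. The key observation is that avoiding the first $n$ coordinate hyperplanes forces $n$ of the homogeneous coordinate functions of a reduced representation to be nowhere vanishing, and the Eremenko order bound will then force these nowhere-vanishing entire functions to be exponentials of affine polynomials.

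First, since $f$ avoids $n$ coordinate hyperplanes, which are in general position, Theorem~\ref{omit n hyperplanes} gives $\rho_f\leq 1$, that is $T_f(r)=O(r)$. Second, pick any reduced representation $f=[f_0:\cdots:f_n]$. The hypothesis $f(\mathbb{C})\cap\{z_i=0\}=\emptyset$ for $0\leq i\leq n-1$ means that each such $f_i$ is a nowhere-vanishing entire function. Since $f_0$ has no zeros, dividing through by $f_0$ gives another reduced representation
\[
f=[1:\phi_1:\cdots:\phi_{n-1}:g],
\]
where $\phi_i:=f_i/f_0$ is entire and nowhere zero for $1\leq i\leq n-1$, and $g:=f_n/f_0$ is entire. (The representation is reduced because the first component is the constant $1$.)

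Third, I would bound the order of each $\phi_i$ by that of $f$. Using the standard comparison $T(r,f_i/f_0)\leq T_f(r)+O(1)$ between the Nevanlinna characteristic of the ratio and the Cartan characteristic of the curve, together with the fact that $\phi_i$ is entire so its Nevanlinna order equals its usual order, I get that $\phi_i$ is a nowhere-vanishing entire function of order $\leq 1$. By Hadamard's factorization theorem, such a function has the form $e^{p(z)}$ with $\deg p\leq 1$, so $\phi_i=e^{\lambda_iz+\mu_i}$, which is the desired representation. For the last assertion, if $f$ additionally avoids $\{z_n=0\}$, then $g$ is also nowhere vanishing; the same order estimate $T(r,g)\leq T_f(r)+O(1)=O(r)$ combined with Hadamard gives $g=e^{\lambda_nz+\mu_n}$.

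The entire argument is assembled from standard ingredients, so I do not anticipate a serious obstacle: the only nontrivial input is Eremenko's theorem (quoted above), and the rest is the Hadamard factorization of a nowhere-vanishing entire function of order at most one, together with the elementary comparison between Cartan's and Nevanlinna's characteristics applied to the coordinate ratios.
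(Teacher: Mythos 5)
Your proof is correct and follows exactly the route the paper intends: the paper states this theorem as a direct consequence of Eremenko's order bound (Theorem~\ref{omit n hyperplanes}) without writing out the details, and your argument—normalize by the zero-free coordinate $f_0$, bound the order of each ratio by that of $f$, and apply Hadamard factorization to the resulting zero-free entire functions of order at most one—is precisely the intended deduction. (The only cosmetic imprecision is equating ``order $\leq 1$'' with $T_f(r)=O(r)$; Hadamard needs only the order bound, so nothing is affected.)
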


The core of Nevanlinna theory consists of two main theorems.

\begin{namedthm*}{First Main Theorem}\label{fmt} Let $f\colon\mathbb{C}\rightarrow \mathbb{P}^n(\mathbb{C})$ be a holomorphic curve and let $D$ be a hypersurface of degree $d$ in $\mathbb{P}^n(\mathbb{C})$ such that $f(\mathbb{C})\not\subset D$. Then for every $r>1$, the following holds
\[
m_f(r,D)
+
N_f(r,D)
\,
=
\,
d\,T_f(r)
+
O(1),
\]
hence
\begin{equation}
\label{-fmt-inequality}
N_f(r,D)
\,
\leq
\,
d\,T_f(r)+O(1).
\end{equation}
\end{namedthm*}

A holomorphic curve $f\colon\mathbb{C}\rightarrow \mathbb{P}^n(\mathbb{C})$ is said to be  \textsl{linearly nondegenerate} if its image is not contained in any hyperplane. For non-negatively valued functions $\varphi(r)$, $\psi(r)$, we write
\[
\varphi(r)
\,
\leq
\,
\psi(r)\parallel
\]
if this inequality holds outside a Borel subset $E$ of $(0,+\infty)$ of finite Lebesgue measure. Next is the Second Main Theorem of Cartan \cite{cartan1933}.
 
\begin{namedthm*}{Second Main Theorem}\label{smt}
Let $f\colon\mathbb{C}\rightarrow \mathbb{P}^n(\mathbb{C})$ and let $\{H_i\}_{1\leq i\leq q}$ be a family of hyperplanes in general position in $\mathbb{P}^n(\mathbb{C})$. Then the following estimate holds:
\[
(q-n-1)
\,
T_f(r)
\,
\leq
\,
\sum_{i=1}^q N_f^{[n]}(r,H_i)+S_f(r),
\]
where $S_f(r)$  is a small term compared with $T_f(r)$
\[
S_f(r)
\,
=
\,
o(T_f(r))\parallel.
\]
\end{namedthm*}

The next three theorems can be deduced from the Second Main Theorem.

\begin{thm}\label{diagonal}
Let $\{H_i\}_{1\leq i\leq n+2}$ be a family of hyperplanes in general position in $\mathbb{P}^n(\mathbb{C})$ with $n\geq 2$. If $f\colon\mathbb{C}\rightarrow \mathbb{P}^n(\mathbb{C})\setminus \cup_{i=1}^{n+2}H_i$ is an entire curve, then its image lies in one of the diagonal hyperplanes of $\{H_i\}_{1\leq i\leq n+2}$.
\end{thm}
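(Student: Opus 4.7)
The plan is to apply Cartan's Second Main Theorem twice — first in $\mathbb{P}^n(\mathbb{C})$ to force $f$ into a proper linear subspace $V$, then inside $V$ to constrain the combinatorics of the traces $V\cap H_i$ — so as to identify a diagonal hyperplane containing the image of $f$.

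Suppose first that $f$ is linearly non-degenerate in $\mathbb{P}^n(\mathbb{C})$. Since $f$ avoids every $H_i$, each truncated counting function vanishes identically: $N_f^{[n]}(r,H_i)\equiv 0$. Cartan's Second Main Theorem applied to the family $\{H_i\}_{1\le i\le n+2}$, which is in general position, then gives
\[
T_f(r)\;=\;(q-n-1)\,T_f(r)\;\le\;\sum_{i=1}^{n+2}N_f^{[n]}(r,H_i)+S_f(r)\;=\;o(T_f(r)),
\]
contradicting the non-constancy of $f$, for which $T_f(r)\to\infty$. Hence $f(\mathbb{C})$ lies in some proper linear subspace, and I let $V$ be the smallest such, of dimension $k\in\{1,\ldots,n-1\}$. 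By minimality, $f\colon\mathbb{C}\to V\cong\mathbb{P}^k$ is linearly non-degenerate; and since $f$ avoids every $H_i$ one has $V\not\subset H_i$, so each trace $V\cap H_i$ is a genuine hyperplane of $V$ also avoided by $f$.

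Next I would analyse the partition $\{1,\ldots,n+2\}=I_1\sqcup\cdots\sqcup I_m$ into coincidence classes defined by $i\sim j\iff V\cap H_i=V\cap H_j$. A second application of Cartan's SMT inside $V$, to a maximal sub-family of distinct traces that is in general position in $V$, yields $m\le k+1$. On the other hand, the general-position hypothesis on $\{H_i\}$ in $\mathbb{P}^n$ forces $|I_\alpha|\le n-k+1$ for every class, since otherwise the $(k-1)$-dimensional subspace $V\cap H_i$ would be contained in $\cap_{i\in I_\alpha}H_i$, violating the dimension count $\dim(\cap_{i\in I_\alpha}H_i)=n-|I_\alpha|$. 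A short pigeonhole argument based on $n+2=\sum_\alpha|I_\alpha|$ combined with these two bounds produces at least two coincidence classes of size $\ge 2$.

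Each coincidence $V\cap H_i=V\cap H_j$ is equivalent algebraically to the restrictions $h_i|_V$ and $h_j|_V$ being proportional, so within every non-singleton class the restrictions are mutually proportional to a single linear form on $V$. Selecting two non-singleton classes and distributing the remaining indices among them yields a partition $\{1,\ldots,n+2\}=J\sqcup K$ with $|J|,|K|\ge 2$; the proportionalities then combine into a non-trivial linear relation $l|_V\equiv 0$ whose coefficients in the basis $\{h_i\}$ are all non-zero, and such $l$ is precisely the defining equation of the diagonal hyperplane $H_{JK}$. Hence $f(\mathbb{C})\subset V\subset H_{JK}$, as required. The main obstacle is this last combinatorial step: the singleton classes constrain which partitions $(J,K)$ can work, and one must use both the bound $|I_\alpha|\le n-k+1$ and the generic linear relations among $\{h_i\}$ to choose an admissible assignment. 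I expect to handle this by an induction on the codimension $n-k$, with the base case $k=1$ following at once from Picard's theorem on $\mathbb{P}^1$, which forces $m=2$ and partition sizes at least $2$ directly from the general-position hypothesis.
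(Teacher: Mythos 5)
The paper does not spell out a proof of this theorem; it is the classical statement usually deduced from Borel's lemma applied to the unique linear relation $\sum_{i=1}^{n+2}c_i\,h_i=0$ (all $c_i\neq 0$): the unit functions $c_i\,(h_i\circ f)$ sum to zero, Borel's lemma partitions the indices into classes of size $\geq 2$ with vanishing sub-sums, and any one class $J$ gives $\sum_{j\in J}c_j\,(h_j\circ f)\equiv 0$, i.e.\ $f(\mathbb{C})\subset H_{J,\,J^c}$. Your route through the minimal linear span $V$ of $f(\mathbb{C})$ is genuinely different and its first stages are sound: the first SMT application, the bound $|I_\alpha|\leq n-k+1$, and the pigeonhole producing two non-singleton classes are all correct. (One small repair: your bound $m\leq k+1$ does not follow from applying SMT to a sub-family in general position; you must also rule out the case where the distinct traces are \emph{not} in general position, which needs Theorem~\ref{-hyperplane-not-in-general-position} to contradict the linear non-degeneracy of $f$ in $V$.)

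The genuine gap is the final step, which you yourself flag as the ``main obstacle'': ``distributing the remaining indices among'' two non-singleton classes does not produce a diagonal hyperplane containing $V$, because for an arbitrary partition $J\sqcup K$ there is no reason why $\ell_{JK}=\sum_{j\in J}c_j h_j$ should vanish on $V$ --- the pairwise proportionalities $h_i|_V\propto h_j|_V$ only give you linear forms supported on a single class, with a priori unrelated coefficients. The step can be closed, but by a different mechanism than the one you sketch: let $W$ be the $(n-k)$-dimensional space of linear forms vanishing on $V$, and for each class set $W_\alpha=W\cap\operatorname{span}\{h_i:i\in I_\alpha\}$, which has dimension $|I_\alpha|-1$. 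Any dependence $\sum_\alpha v_\alpha=0$ with $v_\alpha\in W_\alpha$ is a linear relation among the $h_i$ supported on a union of classes, hence (by uniqueness of the relation, whose coefficients are all non-zero) forces $v_\alpha=t\sum_{i\in I_\alpha}c_i h_i$ with $t\neq 0$ \emph{and} forces every class to be non-singleton; if instead the $W_\alpha$ were independent in $W$ one would get $n+2-m\leq n-k$, i.e.\ $m\geq k+2$, contradicting $m\leq k+1$. So there are no singleton classes at all, every $\sum_{i\in I_\alpha}c_i h_i$ lies in $W$, and taking $J=I_1$, $K=I_1^c$ gives $V\subset H_{JK}$ directly --- no ``distribution of remaining indices'' and no induction on codimension is needed. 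As written, your proposal stops short of this argument, so the proof is incomplete at precisely the point where the diagonal hyperplane is supposed to appear.
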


The following strengthened theorem is due to Dufresnoy \cite{dufresnoy1944}.

\begin{thm}
\label{diagonal_streng}
If a holomorphic map $f\colon\mathbb{C}\rightarrow\mathbb{P}^n(\mathbb{C})$ has its image in the complement of $n + p$ hyperplanes $H_1,\ldots, H_{n+p}$ in general position, then this image is contained in a linear subspace of dimension $\floor*{\frac{n}{p}}$.
\end{thm}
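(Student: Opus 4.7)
The plan is to reduce to a linearly nondegenerate curve in the smallest linear subspace containing $f(\mathbb{C})$, extract equations for $f$ in terms of exponentials using non-vanishing of the defining forms of the hyperplanes, then invoke a Borel-type analysis together with the general position of the hyperplanes in $\mathbb{P}^n(\mathbb{C})$ to constrain the dimension of that subspace.

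Let $V \subseteq \mathbb{P}^n(\mathbb{C})$ be the smallest linear subspace containing $f(\mathbb{C})$ and put $k := \dim V$; the aim is to show $k \leq \lfloor n/p \rfloor$. Since $f$ avoids each $H_i$, every restriction $L_i := V \cap H_i$ is a proper hyperplane of $V$, and $f\colon \mathbb{C} \to V \cong \mathbb{P}^k(\mathbb{C})$ is linearly nondegenerate by minimality. The general position of the $H_i$ in $\mathbb{P}^n(\mathbb{C})$ implies that the $L_i$ are in $n$-subgeneral position in $V$: any $n+1$ of them have empty intersection. A greedy argument (using that any proper subspace of $V$ contains at most $n$ of the $L_i$) selects $k+1$ of the $L_i$ in general position in $V$, which after a linear change of coordinates become coordinate hyperplanes, allowing one to write
\[
f = [\,1 : e^{g_1} : \cdots : e^{g_k}\,],
\]
with $g_1, \ldots, g_k$ entire, each non-constant and no two differing by a constant.

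Each of the remaining $n + p - k - 1$ restricted hyperplanes produces a non-vanishing entire linear combination $a_{\alpha 0} + \sum_{j=1}^k a_{\alpha j}\, e^{g_j} = e^{G_\alpha}$ for some entire $G_\alpha$. Applying Borel's identity, in the spirit of the proof of Theorem~\ref{omit-hyperplanes}, to the relation $a_{\alpha 0} + \sum_j a_{\alpha j} e^{g_j} - e^{G_\alpha} = 0$, and using the linear independence of $\{1, e^{g_1}, \ldots, e^{g_k}\}$, one concludes that $G_\alpha$ differs by a constant from exactly one of $\{0, g_1, \ldots, g_k\}$, and that all other coefficients $a_{\alpha j}$ vanish. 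Geometrically this means $V \cap H_\alpha = V \cap H_{i_{j_\alpha}}$ for some chosen index: each remaining restricted hyperplane coincides in $V$ with one of the $k+1$ coordinate hyperplanes.

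The concluding step, where the bound $k \leq \lfloor n/p \rfloor$ actually emerges, is combinatorial-geometric. Each forced coincidence $V \cap H_\alpha = V \cap H_{i_{j_\alpha}}$ means that $V$ contains the codimension-two subspace $H_\alpha \cap H_{i_{j_\alpha}}$. Grouping the $n+p$ hyperplanes into $k+1$ ``absorption classes'' according to $j_\alpha$ and writing $q_s$ for the size of class $s$, general position in $\mathbb{P}^n(\mathbb{C})$ forces each $q_s \leq n-k+1$ and, more delicately, imposes skew-incompatibility conditions between distinct classes (since intersections of $V$ with several codimension-two subspaces $H_\alpha \cap H_\beta$ must simultaneously lie in $V$, while general position prevents too many of these subspaces from being coplanar). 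The main obstacle is precisely this final sharp accounting: one must show that the system of incidences arising from the $k+1$ absorption classes of $n+p$ hyperplanes in general position is compatible only when $pk \leq n$, the extremal arrangement being a cluster structure balancing the class sizes against the codimension budget of $V$ in $\mathbb{P}^n(\mathbb{C})$.
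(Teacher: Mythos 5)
The paper itself gives no proof of this statement (it is quoted from Dufresnoy), so your attempt is measured against the classical argument. Your reduction is the standard route and is essentially sound up to and including the Borel step: take $V$ the smallest linear subspace containing $f(\mathbb{C})$, of dimension $k$; note $f$ is linearly nondegenerate in $V$; choose $k+1$ of the restricted hyperplanes with linearly independent defining forms on $V$ (possible because the $n+p\ge n+1$ hyperplanes have empty common intersection) to write $f=[1:e^{g_1}:\cdots:e^{g_k}]$; then Borel's identity plus nondegeneracy forces each remaining restricted hyperplane $V\cap H_\alpha$ to coincide with exactly one of the $k+1$ coordinate hyperplanes $L_0,\dots,L_k$ of $V$, partitioning the $n+p$ hyperplanes into $k+1$ classes of sizes $q_0,\dots,q_k$.

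The problem is that the concluding combinatorial step, the only place the bound $k\le\lfloor n/p\rfloor$ can come from, is not actually carried out, and what you do assert there is partly misstated and partly insufficient. The coincidence $V\cap H_\alpha=V\cap H_{i_{j_\alpha}}$ does \emph{not} mean that $V$ contains the codimension-two subspace $H_\alpha\cap H_{i_{j_\alpha}}$ (that subspace has dimension $n-2$, generally much larger than $k$); it means the $(k-1)$-dimensional hyperplane $L_{j_\alpha}$ of $V$ is contained in it. Your bound $q_s\le n-k+1$ only yields $n+p\le(k+1)(n-k+1)$, i.e. $p\le k(n-k)+1$, which does not give $k\le n/p$ (for $n=4$, $p=2$ it fails to exclude $k=3$), and you then explicitly defer the ``final sharp accounting'' to unspecified skew-incompatibility conditions. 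The correct finish is short and goes in the opposite direction, bounding the $q_t$ from \emph{below}: let $P_t=\cap_{s\ne t}L_s$ be the $t$-th coordinate vertex of $V$. Since $L_s\subset H$ for every $H$ in class $s$, the point $P_t$ lies in all $n+p-q_t$ hyperplanes outside class $t$, and general position forces $n+p-q_t\le n$, i.e. $q_t\ge p$ for every $t$. Summing over the $k+1$ classes gives $n+p=\sum_t q_t\ge(k+1)p$, hence $kp\le n$ and $k\le\lfloor n/p\rfloor$. Without this (or an equivalent) argument your proof is incomplete.
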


As a consequence, we have the classical generalization of Picard's Theorem (case $n=1$), due to Fujimoto \cite{Fujimoto1972} (see also \cite{Green1972}).

\begin{thm}
\label{fujimoto-green}
The complement of a collection of $2n+1$ hyperplanes in general position in $\mathbb{P}^n(\mathbb{C})$ is hyperbolic.
\end{thm}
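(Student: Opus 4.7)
The plan is to deduce this directly from Dufresnoy's Theorem~\ref{diagonal_streng}. By the Brody Criterion it suffices to show that no entire curve $f \colon \mathbb{C} \to \mathbb{P}^n(\mathbb{C})$ can have image in the complement of the $2n+1$ hyperplanes. I would write $2n + 1 = n + p$ with $p = n + 1$ and apply Theorem~\ref{diagonal_streng}: the image of any such $f$ must lie in a linear subspace of dimension $\floor*{n/(n+1)} = 0$, that is, a single point. Hence $f$ is constant, which yields the desired hyperbolicity.

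For completeness, I would also sketch an alternative induction on $n$ using Cartan's Second Main Theorem. The base case $n=1$ is Picard's little theorem for $\mathbb{P}^1(\mathbb{C})$ minus three points. For the inductive step, if $f$ is linearly nondegenerate, then Cartan's theorem with $q = 2n+1$ yields
\[
n\,T_f(r) \,\leq\, \sum_{i=1}^{2n+1} N_f^{[n]}(r, H_i) + S_f(r) \,=\, o(T_f(r))\parallel,
\]
since each counting term vanishes; thus $T_f$ is bounded and $f$ is constant. If $f$ is linearly degenerate, its image lies in a proper linear subspace $L$, and one would invoke the induction hypothesis on the traces $\{H_i \cap L\}$ inside $L$.

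The main obstacle along the inductive route is to verify that the restricted hyperplanes $H_i \cap L$ remain in general position in $L$ and that enough of them stay pairwise distinct to satisfy the inductive hypothesis — a check that requires using a genericity property of the original configuration. The Dufresnoy-based approach sidesteps this entirely, handling the dimensional descent in one shot through the arithmetic $\floor*{n/(n+1)} = 0$, and is therefore the preferred proof for the statement as phrased.
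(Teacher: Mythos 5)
Your primary argument --- applying Dufresnoy's Theorem~\ref{diagonal_streng} with $p=n+1$, so that the image of any entire curve omitting the $2n+1$ hyperplanes lies in a linear subspace of dimension $\floor*{n/(n+1)}=0$ and is therefore a point --- is exactly how the paper obtains this statement, which it presents as an immediate consequence of Theorem~\ref{diagonal_streng} with no further argument, so the alternative inductive sketch is not needed. The only caveat is that the Brody Criterion as stated applies to \emph{compact} manifolds, so for the non-compact complement the clean conclusion of this argument is the absence of entire curves (which is precisely how the paper invokes this theorem throughout); upgrading that to Kobayashi hyperbolicity of the complement requires the standard extra limiting argument on boundary strata, a point the paper itself also leaves implicit.
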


For hyperplanes that are not in general position, we have the following result (see \cite{kobayashi1998}, Theorem~3.10.15).

\begin{thm}
\label{-hyperplane-not-in-general-position}
Let $\{H_i\}_{1\leq i\leq q}$ be a family of $q\geq 3$ hyperplanes that are not in general position in $\mathbb{P}^n(\mathbb{C})$. If $f\colon\mathbb{C}\rightarrow \mathbb{P}^n(\mathbb{C})\setminus \cup_{i=1}^{q}H_i$ is an entire curve, then its image lies in some hyperplane.
\end{thm}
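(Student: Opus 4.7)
The plan is to argue by induction on $n$ using linear projection from a common intersection point of $n+1$ of the hyperplanes. Since $\{H_i\}_{1\le i\le q}$ is not in general position, there exist $n+1$ of the hyperplanes (which forces $q\ge n+1$) with nonempty common intersection; after relabelling, pick $p\in H_1\cap\cdots\cap H_{n+1}$. Because $f$ avoids each of these hyperplanes, in particular $f(\mathbb{C})\not\ni p$, so the linear projection $\pi\colon\mathbb{P}^n(\mathbb{C})\setminus\{p\}\to\mathbb{P}^{n-1}(\mathbb{C})$ composes with $f$ to yield an entire curve $g:=\pi\circ f\colon\mathbb{C}\to\mathbb{P}^{n-1}(\mathbb{C})$. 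Each $H_i$ for $1\le i\le n+1$ contains $p$, so $H_i':=\pi(H_i)$ is a hyperplane of $\mathbb{P}^{n-1}(\mathbb{C})$; these hyperplanes are pairwise distinct because $\pi^{-1}(H_i')=H_i$, and $g$ omits their union.

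For the base case $n=2$, the three concurrent lines $H_1,H_2,H_3$ project to three distinct points of $\mathbb{P}^1(\mathbb{C})$. Picard's little theorem forces $g\colon\mathbb{C}\to\mathbb{P}^1(\mathbb{C})$, omitting these three points, to be constant, so $f(\mathbb{C})$ is contained in the line $\pi^{-1}(g(0))\cup\{p\}$.

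For the inductive step $n\ge 3$, I split according to whether the family $\{H_i'\}_{1\le i\le n+1}$ is in general position in $\mathbb{P}^{n-1}(\mathbb{C})$. If it is, then since $n+1=(n-1)+2$ and $n-1\ge 2$, Theorem~\ref{diagonal} applies and $g(\mathbb{C})$ lies inside one of the diagonal hyperplanes of that family. If it is not, then since $n+1\ge 3$, the induction hypothesis applied in $\mathbb{P}^{n-1}(\mathbb{C})$ places $g(\mathbb{C})$ inside some hyperplane. In either case, $g(\mathbb{C})\subset H'$ for some hyperplane $H'\subset\mathbb{P}^{n-1}(\mathbb{C})$, whence $f(\mathbb{C})\subset\pi^{-1}(H')\cup\{p\}$, which is a hyperplane of $\mathbb{P}^n(\mathbb{C})$ through $p$.

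The main subtle point is the dichotomy in the inductive step: regardless of whether the projected family $\{H_i'\}$ is in general position in $\mathbb{P}^{n-1}(\mathbb{C})$, one must place $g(\mathbb{C})$ inside a hyperplane, which is handled either by Theorem~\ref{diagonal} or by the induction hypothesis. A secondary check is the bijective correspondence between hyperplanes of $\mathbb{P}^n(\mathbb{C})$ containing $p$ and hyperplanes of $\mathbb{P}^{n-1}(\mathbb{C})$ via $H\mapsto \pi(H)$, which permits pulling back the containment $g(\mathbb{C})\subset H'$ to $f(\mathbb{C})\subset\pi^{-1}(H')\cup\{p\}$.
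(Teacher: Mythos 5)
The paper does not actually prove this statement: it is quoted from Kobayashi's book (Theorem~3.10.15), so there is no in-text argument to compare against. Your projection-and-induction proof is the standard one and is correct for the situation the paper actually uses. Under the paper's definition, ``general position'' is only defined for $q\geq n+1$ and means that every $n+1$ of the hyperplanes have empty intersection, so its failure is precisely the existence of a point $p\in H_1\cap\cdots\cap H_{n+1}$, and from there your argument goes through: hyperplanes through $p$ correspond bijectively to hyperplanes of $\mathbb{P}^{n-1}(\mathbb{C})$, the composition $g=\pi\circ f$ omits the $n+1$ distinct images, the base case is Picard, and the dichotomy in the inductive step (Theorem~\ref{diagonal} when the projected family is in general position, the induction hypothesis otherwise) covers both alternatives. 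Every invocation of the theorem in the paper is of this concurrent type (in fact always three concurrent lines in a plane), so your proof suffices for the paper's purposes.

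Two caveats. First, in the inductive step you should dispose of the possibility that $g$ is constant, since Theorem~\ref{diagonal} and the induction hypothesis concern nonconstant curves; this is harmless, because a constant $g$ places $f(\mathbb{C})$ inside a line through $p$, hence inside a hyperplane. Second, the statement is phrased for any $q\geq 3$, and under the usual convention of Kobayashi's book ``not in general position'' also covers a linearly dependent subfamily of $k\leq n$ distinct hyperplanes (for instance three planes in $\mathbb{P}^3(\mathbb{C})$ sharing a line), a configuration with no $n+1$ concurrent hyperplanes; your parenthetical ``which forces $q\geq n+1$'' does not follow in that reading. The same scheme repairs it: take a minimal linearly dependent subfamily $H_{i_1},\dots,H_{i_k}$ with $k\geq 3$, project from their common codimension-$(k-1)$ intersection onto $\mathbb{P}^{k-2}(\mathbb{C})$, where the images are $k=(k-2)+2$ hyperplanes in general position, and conclude by Picard when $k=3$ or by Theorem~\ref{diagonal} when $k\geq 4$.
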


\section{Starting lemmas}
\label{-starting-lemmas}

Let us introduce some notations before going to other applications. Let $\{H_i\}_{1\leq i\leq q}$ be a family of generic hyperplanes of $\mathbb{P}^n(\mathbb{C})$, where $H_i=\{h_i=0\}$. For some integer $0\leq k\leq n-1$ and some subset $I_k=\{i_1,\dots,i_{n-k}\}$ of the index set $\{1,\ldots,q\}$ having cardinality $n-k$, the linear subspace $P_{k,I_k}= \cap_{i\in I_k}H_i\simeq\mathbb{P}^k(\mathbb{C})$ is called a \textsl{subspace of dimension $k$}. For a holomorphic mapping $f\colon\mathbb{C}\rightarrow\mathbb{P}^n(\mathbb{C})$, we define
\begin{equation*}
n_f(t,P_{k,I_k})
\,
:=
\,
\sum_{|z|<t,f(z)\in P_{k,I_k}}
\min_{i\in I_k}
\,
\ord_z (h_i\circ f)
\quad\quad {\scriptstyle (t\,>\,0)},
\end{equation*}
where we take the sum only for $z$ in the preimage of $P_{k,I_k}$, and
\begin{equation}
\label{counting function for subspace}
N_f(r,P_{k,I_k})
\,
:=
\,
\int_1^r\dfrac{n_f(t,P_{k,I_k})}{t}\,\der t
\quad\quad {\scriptstyle (r\,>\,1)}.
\end{equation}

We denote by $P_{k,I_k}^*$ the complement $P_{k,I_k}\setminus\big(\cup_{i\not\in I_k} H_i\big)$ which will be called a \textsl{star-subspace of dimension $k$}. We can also define $n_f(t,P_{k,I_k}^*)$ and $N_f(r,P_{k,I_k}^*)$. Assume now $q=2n+1+m$ with $m\geq 0$. Consider complements of the form
\begin{equation}\label{hyperbolic_complement}
\mathbb{P}^n(\mathbb{C})\setminus(\cup_{i=1}^{2n+1+m}H_i\setminus A_{m,n}),
\end{equation}
where $A_{m,n}$ is a set of at most $m$ elements of the form $P_{k,I_k}^*$ ($0\leq k\leq n-2$). We note that if $m=0$, these complements are hyperbolic by Theorem~\ref{fujimoto-green}.

In $\mathbb{P}^2(\mathbb{C})$, a union of lines $\cup_{i=1}^q H_i$ is in general position if any three lines have empty intersection, and it is generic if in addition any three intersection points between three distinct pairs of lines are not collinear. 

\begin{lem}\label{start_lem_P2} 
In $\mathbb{P}^2(\mathbb{C})$, if $m\leq 3$, all complements of the form \eqref{hyperbolic_complement} are hyperbolic.
\end{lem}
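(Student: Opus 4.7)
The plan is to suppose for contradiction that a nonconstant holomorphic curve $f\colon\mathbb{C}\to\mathbb{P}^2(\mathbb{C})$ exists in the complement
\[
\mathbb{P}^2(\mathbb{C})\setminus\bigl(\cup_{i=1}^{5+m}H_i\setminus A_{m,2}\bigr),
\]
and, via the Brody Lemma, to replace $f$ by a Brody curve. The case $m=0$ is immediate from Theorem~\ref{fujimoto-green} since then $A_{m,2}=\emptyset$. For $m\in\{1,2,3\}$ I would proceed by case analysis on the combinatorial type of $A_{m,2}$ relative to the family $\{H_i\}$.

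Let $s$ denote the number of \emph{used} lines (those containing at least one point of $A_{m,2}$), so that $f$ omits the remaining $5+m-s$ \emph{free} lines. Since each allowed point lies on two lines, $s\leq 2m$, and the number of free lines is at least $5-m\geq 2$. When at least $5$ lines are free, Theorem~\ref{fujimoto-green} yields an immediate contradiction; when exactly $4$ lines are free, Theorem~\ref{diagonal_streng} forces $f(\mathbb{C})$ into a projective line $L\subset\mathbb{P}^2(\mathbb{C})$. On such an $L\cong\mathbb{P}^1(\mathbb{C})$, the four free lines cut out four distinct points (by general position of $\{H_i\}$) which $f$ must omit, and $f$ can only meet the used lines at the allowed points of $A_{m,2}$. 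A short subcase analysis (depending on whether $L$ coincides with a used line, passes through an allowed point, or does neither) shows that in every situation $f$ avoids at least four points of $\mathbb{P}^1$, hence $f$ is constant by Picard's theorem. This handles $m=1$ entirely and several subcases of $m\in\{2,3\}$.

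The remaining subcases are those with $s\geq 5$: namely $m=2$ with two points on disjoint pairs of lines ($s=4$, three free lines), and $m=3$ with $s\in\{5,6\}$. Here I would apply Theorem~\ref{omit-hyperplanes} to obtain, in suitable coordinates adapted to two free lines, the reduced representation $f=[1:e^{\lambda z+\mu}:g(z)]$, or even the fully exponential form $[1:e^{\lambda_1 z+\mu_1}:e^{\lambda_2 z+\mu_2}]$ when three coordinate hyperplanes are free. Each used line $H_i=\{\alpha_iz_0+\beta_iz_1+\gamma_iz_2=0\}$ then gives
\[
h_i\circ f \,=\, \alpha_i+\beta_i\,e^{\lambda z+\mu}+\gamma_i\,g(z),
\]
whose zero set must coincide with the $f$-preimage of a single allowed point on $H_i$. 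Using Borel's theorem on exponential sums, combined with the genericity hypothesis (which forbids accidental collinearities among intersections of $\{H_i\}$), one shows that the parameters $\lambda,\mu$ and the function $g$ cannot simultaneously satisfy all of the resulting constraints, yielding a contradiction.

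The hardest case, and the one I expect to be the main obstacle, is the extremal configuration $m=3$ with $s=6$: only two lines are free and $g$ is not a priori exponential, so neither Dufresnoy's theorem nor reduction to a triple-exponential form applies directly. Here I would combine Theorem~\ref{omit n hyperplanes} (which bounds the order of $f$ by $1$, strongly restricting $g$) with the Second Main Theorem of Cartan truncated at level $2$ applied to the $8$ lines, and then invoke the full strength of the genericity assumption on $\{H_i\}$ to eliminate any remaining alignment among the exponents and line coefficients that could permit such a Brody curve to persist.
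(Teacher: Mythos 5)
Your reduction by counting \emph{free} lines is sound and reproduces the easy part of the paper's argument: the case $m=0$ is Theorem~\ref{fujimoto-green}, and whenever at least four lines are omitted you can push $f$ into a line via Theorem~\ref{diagonal_streng} (the paper uses Theorem~\ref{diagonal} for $m=1$, to the same effect) and finish with Picard. One small inaccuracy there: the four free lines need not cut out four \emph{distinct} points on $L$ by general position alone, since $L$ may pass through intersection points of pairs of free lines; what saves the count is the genericity hypothesis (no three pairwise intersection points collinear), which your subcase analysis would have to invoke explicitly.

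The genuine gap is in the substantive cases --- $m=2$ with $A,B$ on disjoint pairs of lines, and $m=3$ --- where you propose a Borel-type analysis of $h_i\circ f=\alpha_i+\beta_i e^{\lambda z+\mu}+\gamma_i g$. Borel's theorem on exponential sums applies to identities among \emph{nowhere-vanishing} functions, but here each $h_i\circ f$ is allowed to vanish (precisely on the preimages of the admitted points), so no unit identity is available and the argument does not get off the ground; moreover a used line may carry two admitted points, so the zero set need not be the preimage of a single point as you assert. What actually closes these cases in the paper is a quantitative Second Main Theorem computation: one bounds $\sum_{i\in I}N_f^{[2]}(r,H_i)$ by $3$ times the sum of the counting functions $N_f(r,A_i)$ of the admitted points (an elementary local multiplicity inequality at each preimage), then dominates $\sum_i N_f(r,A_i)$ by the counting function of an auxiliary low-degree divisor through the admitted points (the line $AB$ for $m=2$, the degenerate cubic $AB\cup BC\cup CA$ for $m=3$), and feeds this into Cartan's theorem and the First Main Theorem to get $4T_f\le 3T_f+S_f$ (resp.\ $5T_f\le\tfrac92 T_f+S_f$) for a linearly nondegenerate $f$; the degenerate case is then dispatched by Picard. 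None of this bookkeeping --- in particular the choice of the auxiliary divisor and the two counting-function inequalities --- appears in your proposal, and your closing paragraph for the $s=6$ configuration only names the tools without producing the inequality that yields the contradiction. As written, the proof is incomplete precisely where the lemma has content.
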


%\vf

\begin{proof}
Without loss of generality, we can assume that $A_{m,2}$ is a set of $m$ distinct points belonging to $\cup_{1\leq i_1<i_2\leq 5+m}\,H_{i_1}\cap H_{i_2}$.

When $m=1$, an entire curve $f\colon\mathbb{C}\rightarrow \mathbb{P}^2(\mathbb{C})\setminus (\cup_{i=1}^6H_i\setminus A_{1,2})$, if it exists, must avoid at least four lines.
\begin{center}
\begin{picture}(0,0)%
\includegraphics{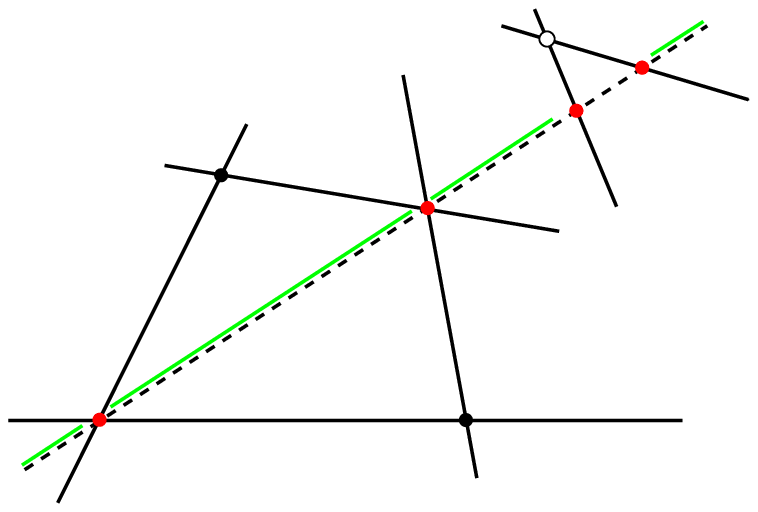}%
\end{picture}%
\setlength{\unitlength}{4144sp}%
\begingroup\makeatletter\ifx\SetFigFont\undefined%
\gdef\SetFigFont#1#2#3#4#5{%
  \reset@font\fontsize{#1}{#2pt}%
  \fontfamily{#3}\fontseries{#4}\fontshape{#5}%
  \selectfont}%
\fi\endgroup%
\begin{picture}(3427,2325)(471,-2379)
\put(3011,-201){\makebox(0,0)[lb]{\smash{{\SetFigFont{11}{13.2}{\familydefault}{\mddefault}{\updefault}{\color[rgb]{0,0,0}$A_{1,2}$}%
}}}}
\put(1349,-1389){\makebox(0,0)[lb]{\smash{{\SetFigFont{11}{13.2}{\familydefault}{\mddefault}{\updefault}{\color[rgb]{0,0,0}$f(\mathbb{C})$}%
}}}}
\end{picture}%
\end{center}

\noindent
By Theorem \ref{diagonal}, its image lies in a diagonal line, 
which does not contain the intersection point of the two remaining lines by the generic condition. Hence, $f$ must be contained in the complement of four points in a line. By Picard's theorem, $f$ is constant, which is contradiction.

When $m=2$, $A_{2,2}$ is a set consisting of two points $A$, $B$, where $A=H_{i_1}\cap H_{i_2}$, $B=H_{i_3}\cap H_{i_4}$. We denote by $I$ the index set $\{i_1,i_2,i_3,i_4\}$, which has three elements if both $A$ and $B$ belong to a single line $H_i$ and which has four elements otherwise.
\begin{center}
\begin{picture}(0,0)%
\includegraphics{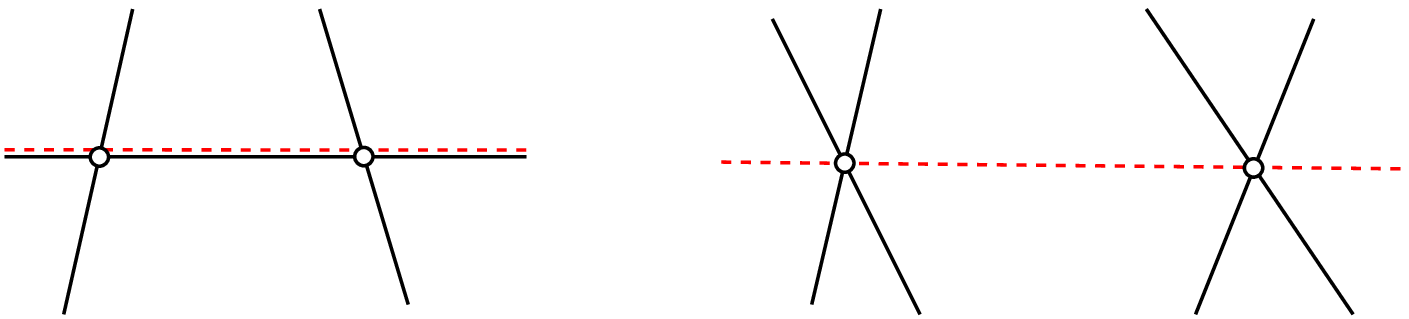}%
\end{picture}%
\setlength{\unitlength}{4144sp}%
\begingroup\makeatletter\ifx\SetFigFont\undefined%
\gdef\SetFigFont#1#2#3#4#5{%
  \reset@font\fontsize{#1}{#2pt}%
  \fontfamily{#3}\fontseries{#4}\fontshape{#5}%
  \selectfont}%
\fi\endgroup%
\begin{picture}(6425,1439)(564,-1838)
\put(5611,-619){\makebox(0,0)[lb]{\smash{{\SetFigFont{12}{14.4}{\familydefault}{\mddefault}{\updefault}{\color[rgb]{0,0,0}$H_{i_3}$}%
}}}}
\put(6588,-612){\makebox(0,0)[lb]{\smash{{\SetFigFont{12}{14.4}{\familydefault}{\mddefault}{\updefault}{\color[rgb]{0,0,0}$H_{i_4}$}%
}}}}
\put(4622,-577){\makebox(0,0)[lb]{\smash{{\SetFigFont{12}{14.4}{\familydefault}{\mddefault}{\updefault}{\color[rgb]{0,0,0}$H_{i_2}$}%
}}}}
\put(2602,-1009){\makebox(0,0)[lb]{\smash{{\SetFigFont{12}{14.4}{\familydefault}{\mddefault}{\updefault}{\color[rgb]{0,0,0}\red{$\mathcal{L}$}}%
}}}}
\put(5299,-1079){\makebox(0,0)[lb]{\smash{{\SetFigFont{12}{14.4}{\familydefault}{\mddefault}{\updefault}{\color[rgb]{0,0,0}\red{$\mathcal{L}$}}%
}}}}
\put(800,-625){\makebox(0,0)[lb]{\smash{{\SetFigFont{12}{14.4}{\familydefault}{\mddefault}{\updefault}{\color[rgb]{0,0,0}$H_{i_1}$}%
}}}}
\put(790,-1277){\makebox(0,0)[lb]{\smash{{\SetFigFont{12}{14.4}{\familydefault}{\mddefault}{\updefault}{\color[rgb]{0,0,0}$A$}%
}}}}
\put(1302,-1275){\makebox(0,0)[lb]{\smash{{\SetFigFont{12}{14.4}{\familydefault}{\mddefault}{\updefault}{\color[rgb]{0,0,0}$H_{i_2}=H_{i_3}$}%
}}}}
\put(2340,-1266){\makebox(0,0)[lb]{\smash{{\SetFigFont{12}{14.4}{\familydefault}{\mddefault}{\updefault}{\color[rgb]{0,0,0}$B$}%
}}}}
\put(3861,-580){\makebox(0,0)[lb]{\smash{{\SetFigFont{12}{14.4}{\familydefault}{\mddefault}{\updefault}{\color[rgb]{0,0,0}$H_{i_1}$}%
}}}}
\put(4195,-1313){\makebox(0,0)[lb]{\smash{{\SetFigFont{12}{14.4}{\familydefault}{\mddefault}{\updefault}{\color[rgb]{0,0,0}$A$}%
}}}}
\put(6470,-1328){\makebox(0,0)[lb]{\smash{{\SetFigFont{12}{14.4}{\familydefault}{\mddefault}{\updefault}{\color[rgb]{0,0,0}$B$}%
}}}}
\put(2134,-595){\makebox(0,0)[lb]{\smash{{\SetFigFont{12}{14.4}{\familydefault}{\mddefault}{\updefault}{\color[rgb]{0,0,0}$H_{i_4}$}%
}}}}
\end{picture}%
\end{center}

\noindent
Let $f\colon\mathbb{C}\rightarrow \mathbb{P}^2(\mathbb{C})\setminus (\cup_{i=1}^7H_i\setminus A_{2,2})$ be an entire curve. If $z\in f^{-1}(A)$, we have 
\begin{align*}
\ord_z(h_{i_1}\circ f)
&
\,
\geq
\,
1,\\
\ord_z(h_{i_2}\circ f)
&
\,
\geq
\,1.
\end{align*}
This implies 
 \begin{equation}\label{es at point A,z}
\min
\,
\{\ord_z (h_{i_1}\circ f),2\}
+
\min
\,
\{\ord_z (h_{i_2}\circ f),2\}
\,
\leq
\,
3
\,
\min_{1\leq j\leq 2}
\,
\ord_z (h_{i_j}\circ f).
\end{equation}
Hence, by summing this inequality
\begin{equation}
\label{-estimate-at-the-point-A}
\sum_{|z|<t,f(z)=A}
\min
\,
\{\ord_z (h_{i_1}\circ f),2\}
+
\sum_{|z|<t,f(z)=A}
\min
\,
\{\ord_z (h_{i_2}\circ f),2\}
\,
\leq
\,
3
\sum_{|z|<t,f(z)=A}
\min_{1\leq j\leq 2}
\,
\ord_z (h_{i_j}\circ f).
\end{equation}
Similarly for $h_{i_3}$, $h_{i_4}$ and $z\in f^{-1}(B)$, we have
\begin{equation}
\label{-estimate-at-the-point-B}
\sum_{|z|<t,f(z)=B}
\min
\,
\{\ord_z (h_{i_3}\circ f),2\}
+
\sum_{|z|<t,f(z)=B}
\min
\,
\{\ord_z (h_{i_4}\circ f),2\}
\,
\leq
\,
3 \sum_{|z|<t,f(z)=B}
\min_{3\leq j\leq 4}
\,
\ord_z (h_{i_j}\circ f).
\end{equation}
By taking the sum of both sides of these inequalities and by integrating, we obtain
\begin{equation}\label{-compare-the-counting-functions1}
\sum_{i\in I} N_f^{[2]}(r,H_i)
\,
\leq
\,
3
\,
\big(N_f(r,A)+N_f(r,B)\big).
\end{equation}

Now, let $\mathcal{L}=\{\ell=0\}$ be the line passing through $A$ and $B$. Since $\ell=\alpha_1 h_{i_1}+\alpha_2 h_{i_2}=\alpha_3 h_{i_3}+\alpha_4 h_{i_4}$ for some $\alpha_1,\ldots,\alpha_4\in\mathbb{C}$, the following inequalities hold
\begin{align}
\label{-compare-the-order_at-each-point}
\min_{1\leq j\leq 2}
\,
\ord_z (h_{i_j}\circ f)
&
\,
\leq
\,
\ord_z(\ell\circ f)
\quad\quad {\scriptstyle (z\,\in\,f^{-1}(A))},\notag\\
\min_{3\leq j\leq 4}
\,
\ord_z (h_{i_j}\circ f)
&
\,
\leq
\,
\ord_z(\ell\circ f)
\quad\quad {\scriptstyle (z\,\in\,f^{-1}(B))}.
\end{align}
Since $f^{-1}(A)$ and $f^{-1}(B)$ are two disjoint subsets of $f^{-1}(\mathcal{L})$, by taking the sum of both sides of these inequalities on discs and by integrating, we obtain
\begin{equation}
\label{-compare-the-counting-function-at-points-and-line-L}
N_f(r,A)
+
N_f(r,B)
\,
\leq
\,
N_f(r,\mathcal{L}).
\end{equation}

If $f$ would be linearly nondegenerate, then starting from Cartan Second Main Theorem, and using \eqref{-compare-the-counting-functions1}, \eqref{-compare-the-counting-function-at-points-and-line-L}, we would get
\begin{align}\label{-last-estimate-conclude1}
4\,T_f(r)
&
\,
\leq
\,
\sum_{i=1}^{7}
N_f^{[2]}(r,H_i)
+
S_f(r)\notag\\
&
\,
=
\,
\sum_{i\in I}N_f^{[2]}(r,H_i)
+
S_f(r)\notag\\
&
\,
\leq
\,
3\,\big(N_f(r,A)+N_f(r,B)\big)
+
S_f(r)\notag\\
&
\,
\leq
\,
3\,N_f(r,\mathcal{L})
+
S_f(r)\notag\\
\explain{Use \eqref{-fmt-inequality}}
\ \ \ \ \ \ \ \ \ \ \ \ \
&
\,
\leq
\,
3\,T_f(r)
+
S_f(r),
\end{align}
which is absurd. Thus, any entire curve $f:\mathbb{C}\rightarrow \mathbb{P}^2(\mathbb{C})\setminus (\cup_{i=1}^7H_i\setminus A_{2,2})$ must be contained in some line $L$. Furthermore, the number of points of intersection between $L$ and $\cup_{i=1}^7H_i\setminus\{A,B\}$ is at least 3 by the generic condition. By Picard's Theorem, this contradicts the assumption that $f$ is nonconstant.

When $m=3$, $A_{3,2}$ is a set consisting of three points $A$, $B$, $C$, where $A=H_{i_1}\cap H_{i_2}$, $B=H_{i_3}\cap H_{i_4}$, $C=H_{i_5}\cap H_{i_6}$. In this case, the index set $J=\{i_1,i_2,i_3,i_4,i_5,i_6\}$ may contain $4$, $5$ or $6$ elements.

\begin{center}
\begin{picture}(0,0)%
\includegraphics{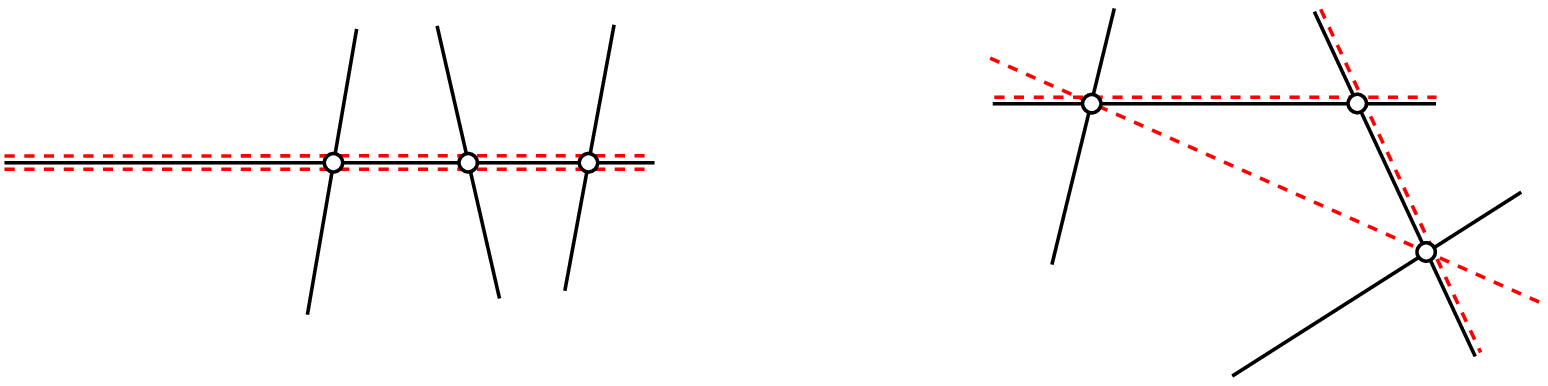}%
\end{picture}%
\setlength{\unitlength}{4144sp}%
\begingroup\makeatletter\ifx\SetFigFont\undefined%
\gdef\SetFigFont#1#2#3#4#5{%
  \reset@font\fontsize{#1}{#2pt}%
  \fontfamily{#3}\fontseries{#4}\fontshape{#5}%
  \selectfont}%
\fi\endgroup%
\begin{picture}(7082,1840)(261,-2124)
\put(3097,-575){\makebox(0,0)[lb]{\smash{{\SetFigFont{12}{14.4}{\familydefault}{\mddefault}{\updefault}{\color[rgb]{0,0,0}$H_{i_6}$}%
}}}}
\put(5035,-441){\makebox(0,0)[lb]{\smash{{\SetFigFont{12}{14.4}{\familydefault}{\mddefault}{\updefault}{\color[rgb]{0,0,0}$H_{i_2}$}%
}}}}
\put(6658,-1717){\makebox(0,0)[lb]{\smash{{\SetFigFont{12}{14.4}{\familydefault}{\mddefault}{\updefault}{\color[rgb]{0,0,0}$C$}%
}}}}
\put(6265,-964){\makebox(0,0)[lb]{\smash{{\SetFigFont{12}{14.4}{\familydefault}{\mddefault}{\updefault}{\color[rgb]{0,0,0}$B$}%
}}}}
\put(5472,-691){\makebox(0,0)[lb]{\smash{{\SetFigFont{12}{14.4}{\familydefault}{\mddefault}{\updefault}{\color[rgb]{0,0,0}$H_{i_1}= H_{i_3}$}%
}}}}
\put(6383,-431){\makebox(0,0)[lb]{\smash{{\SetFigFont{12}{14.4}{\familydefault}{\mddefault}{\updefault}{\color[rgb]{0,0,0}$H_{i_4}=H_{i_5}$}%
}}}}
\put(2816,-952){\makebox(0,0)[lb]{\smash{{\SetFigFont{12}{14.4}{\familydefault}{\mddefault}{\updefault}{\color[rgb]{0,0,0}$C$}%
}}}}
\put(327,-1295){\makebox(0,0)[lb]{\smash{{\SetFigFont{12}{14.4}{\familydefault}{\mddefault}{\updefault}{\color[rgb]{0,0,0}$H_{i_1}= H_{i_3}= H_{i_5}$}%
}}}}
\put(1622,-959){\makebox(0,0)[lb]{\smash{{\SetFigFont{12}{14.4}{\familydefault}{\mddefault}{\updefault}{\color[rgb]{0,0,0}$A$}%
}}}}
\put(2174,-958){\makebox(0,0)[lb]{\smash{{\SetFigFont{12}{14.4}{\familydefault}{\mddefault}{\updefault}{\color[rgb]{0,0,0}$B$}%
}}}}
\put(2340,-518){\makebox(0,0)[lb]{\smash{{\SetFigFont{12}{14.4}{\familydefault}{\mddefault}{\updefault}{\color[rgb]{0,0,0}$H_{i_4}$}%
}}}}
\put(1556,-532){\makebox(0,0)[lb]{\smash{{\SetFigFont{12}{14.4}{\familydefault}{\mddefault}{\updefault}{\color[rgb]{0,0,0}$H_{i_2}$}%
}}}}
\put(6136,-2060){\makebox(0,0)[lb]{\smash{{\SetFigFont{12}{14.4}{\familydefault}{\mddefault}{\updefault}{\color[rgb]{0,0,0}$H_{i_6}$}%
}}}}
\put(5033,-983){\makebox(0,0)[lb]{\smash{{\SetFigFont{12}{14.4}{\familydefault}{\mddefault}{\updefault}{\color[rgb]{0,0,0}$A$}%
}}}}
\end{picture}%
\end{center}

\noindent

\begin{center}
\begin{picture}(0,0)%
\includegraphics{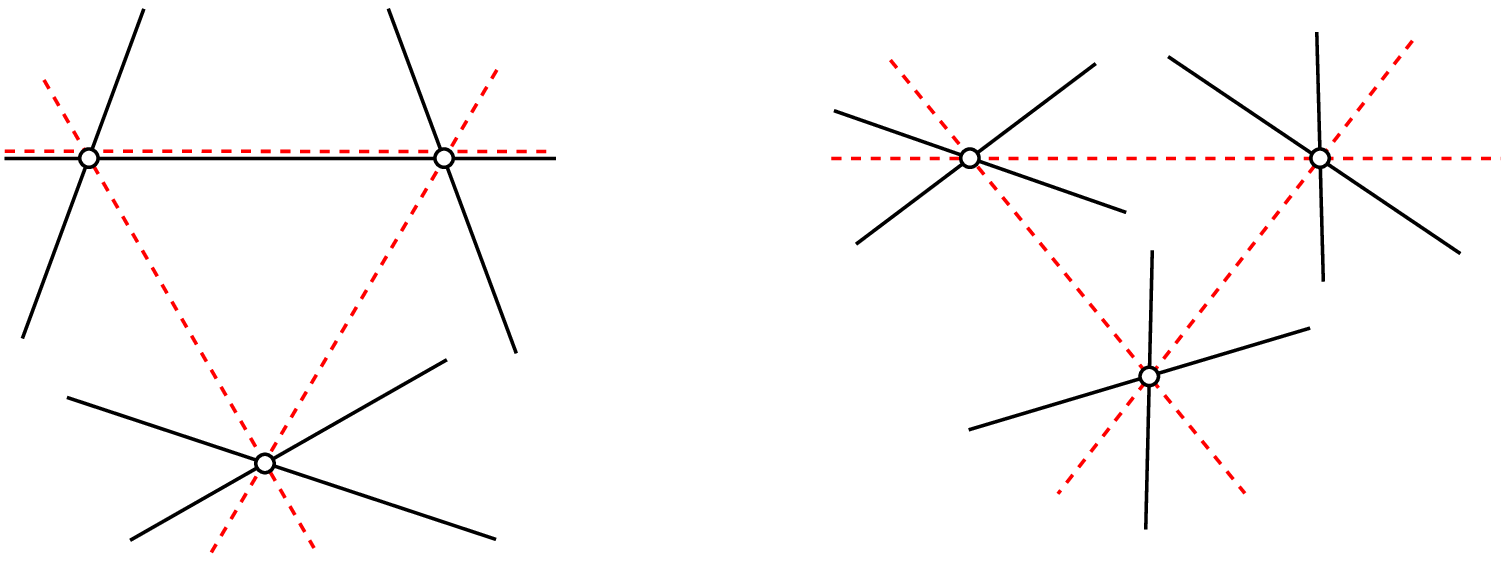}%
\end{picture}%
\setlength{\unitlength}{4144sp}%
\begingroup\makeatletter\ifx\SetFigFont\undefined%
\gdef\SetFigFont#1#2#3#4#5{%
  \reset@font\fontsize{#1}{#2pt}%
  \fontfamily{#3}\fontseries{#4}\fontshape{#5}%
  \selectfont}%
\fi\endgroup%
\begin{picture}(6884,2645)(699,-2284)
\put(1001,-2121){\makebox(0,0)[lb]{\smash{{\SetFigFont{12}{14.4}{\familydefault}{\mddefault}{\updefault}{\color[rgb]{0,0,0}$H_{i_6}$}%
}}}}
\put(5089,-297){\makebox(0,0)[lb]{\smash{{\SetFigFont{12}{14.4}{\familydefault}{\mddefault}{\updefault}{\color[rgb]{0,0,0}$A$}%
}}}}
\put(6100,-1529){\makebox(0,0)[lb]{\smash{{\SetFigFont{12}{14.4}{\familydefault}{\mddefault}{\updefault}{\color[rgb]{0,0,0}$C$}%
}}}}
\put(4928,-1556){\makebox(0,0)[lb]{\smash{{\SetFigFont{12}{14.4}{\familydefault}{\mddefault}{\updefault}{\color[rgb]{0,0,0}$H_{i_5}$}%
}}}}
\put(2406,-2220){\makebox(0,0)[lb]{\smash{{\SetFigFont{12}{14.4}{\familydefault}{\mddefault}{\updefault}{\color[rgb]{0,0,0}$H_{i_5}$}%
}}}}
\put(848,-610){\makebox(0,0)[lb]{\smash{{\SetFigFont{12}{14.4}{\familydefault}{\mddefault}{\updefault}{\color[rgb]{0,0,0}$A$}%
}}}}
\put(1455,-627){\makebox(0,0)[lb]{\smash{{\SetFigFont{12}{14.4}{\familydefault}{\mddefault}{\updefault}{\color[rgb]{0,0,0}$H_{i_1}= H_{i_3}$}%
}}}}
\put(2838,-609){\makebox(0,0)[lb]{\smash{{\SetFigFont{12}{14.4}{\familydefault}{\mddefault}{\updefault}{\color[rgb]{0,0,0}$B$}%
}}}}
\put(1859,-1642){\makebox(0,0)[lb]{\smash{{\SetFigFont{12}{14.4}{\familydefault}{\mddefault}{\updefault}{\color[rgb]{0,0,0}$C$}%
}}}}
\put(1411,187){\makebox(0,0)[lb]{\smash{{\SetFigFont{12}{14.4}{\familydefault}{\mddefault}{\updefault}{\color[rgb]{0,0,0}$H_{i_2}$}%
}}}}
\put(2177,214){\makebox(0,0)[lb]{\smash{{\SetFigFont{12}{14.4}{\familydefault}{\mddefault}{\updefault}{\color[rgb]{0,0,0}$H_{i_4}$}%
}}}}
\put(5632,-2143){\makebox(0,0)[lb]{\smash{{\SetFigFont{12}{14.4}{\familydefault}{\mddefault}{\updefault}{\color[rgb]{0,0,0}$H_{i_6}$}%
}}}}
\put(4312,-112){\makebox(0,0)[lb]{\smash{{\SetFigFont{12}{14.4}{\familydefault}{\mddefault}{\updefault}{\color[rgb]{0,0,0}$H_{i_2}$}%
}}}}
\put(4352,-726){\makebox(0,0)[lb]{\smash{{\SetFigFont{12}{14.4}{\familydefault}{\mddefault}{\updefault}{\color[rgb]{0,0,0}$H_{i_1}$}%
}}}}
\put(6883,-373){\makebox(0,0)[lb]{\smash{{\SetFigFont{12}{14.4}{\familydefault}{\mddefault}{\updefault}{\color[rgb]{0,0,0}$B$}%
}}}}
\put(6108, 47){\makebox(0,0)[lb]{\smash{{\SetFigFont{12}{14.4}{\familydefault}{\mddefault}{\updefault}{\color[rgb]{0,0,0}$H_{i_3}$}%
}}}}
\put(6769,143){\makebox(0,0)[lb]{\smash{{\SetFigFont{12}{14.4}{\familydefault}{\mddefault}{\updefault}{\color[rgb]{0,0,0}$H_{i_4}$}%
}}}}
\end{picture}%
\end{center}

Suppose to the contrary that there is an entire curve $f\colon \mathbb{C}\rightarrow \mathbb{P}^2(\mathbb{C})\setminus(\cup_{i=1}^8H_i\setminus A_{3,2})$. Similarly as above, cf. \eqref{-estimate-at-the-point-A}, \eqref{-estimate-at-the-point-B}, \eqref{-compare-the-counting-functions1}, we can show in all the four illustrated cases
\begin{equation*}\label{compare the counting functions2}
\sum_{i\in J}
 N_f^{[2]}(r,H_i)
\,
\leq
\,
3
\,
\big(
N_f(r,A)
+
N_f(r,B)
+
N_f(r,C)
\big).
\end{equation*}

Next, let $\mathcal{C}=\{\mathsf{c}=0\}$ be the degenerate cubic consisting of the three lines $AB=\{\ell_{AB}=0\}$, $BC=\{\ell_{BC}=0\}$, and $CA=\{\ell_{CA}=0\}$. Similarly as in \eqref{-compare-the-order_at-each-point}, we have
\begin{align*}
2
\,
\min_{1\leq j\leq 2}
\,
\ord_z (h_{i_j}\circ f)
&
\,
\leq
\,
\ord_z(\ell_{AB}\circ f)
+
\ord_z(\ell_{CA}\circ f)\\
&
\,
=
\,\ord_z(\mathsf{c}\circ f)
\qquad\qquad \qquad\qquad\quad\quad{\scriptstyle (z\,\in\,f^{-1}(A))}.
\end{align*}
We also have two other inequalities for $h_{i_3}$, $h_{i_4}$, $z\in f^{-1}(B)$ and for $h_{i_5}$, $h_{i_6}$, $z\in f^{-1 }(C)$. Summing these inequalities and integrating, we get
\begin{equation*}
\label{-compare-the-counting-function-at-points-and-cubic-C}
2
\,
\big(
N_f(r,A)
+
N_f(r,B)
+
N_f(r,C)
\big)
\,
\leq
\,
N_f(r,\mathcal{C}).
\end{equation*}

If the curve $f$ is linearly nondegenerate, then by proceeding as we did in \eqref{-last-estimate-conclude1}, we also get a contradiction.
\begin{align*}
5\,T_f(r)
&
\,
\leq
\,
\sum_{i=1}^{8}
N_f^{[2]}(r,H_i)
+
S_f(r)\\
&
\,
\leq
\,
3
\,
\big(
N_f(r,A)
+
N_f(r,B)
+
N_f(r,C)
\big)
+
S_f(r)\\
&
\,
\leq 
\,
\dfrac{3}{2}
\,
N_f(r,\mathcal{C})
+
S_f(r)\\
&
\,
\leq
\,
\dfrac{9}{2}
\,T_f(r)
+
S_f(r).
\end{align*} 
Thus the curve $f$ must be contained in some line. By analyzing the position of this line with respect to $\{H_i\}_{1\leq i\leq 8}\setminus \{A,B,C\}$ and by using Picard's theorem, we conclude as above.
\end{proof}

In $\mathbb{P}^3(\mathbb{C})$, the generic condition for the family of planes $\{H_i\}_{1\leq i\leq q}$ excludes the following cases.
\begin{itemize}
\item[\textbf{(1)}] There are three disjoint subsets $I$, $J$, $K$ of $\{1,\ldots,q\}$ with $|I|=3$, $|J|=2$, $|K|=3$ such that the diagonal (hyper)plane $H_{IJ}$ contains the point $\cap_{k\in K}H_k$.
\item[\textbf{(2)}] There are four disjoint subsets $I$, $J$, $K_1$, $K_2$ of $\{1,\ldots,q\}$ with $|I|=3$, $|J|=2$, $|K_1|=|K_2|=2$ such that the three points $(\cap_{k_1\in K_1}H_{k_1})\cap H_{IJ}$, $(\cap_{k_2\in K_2}H_{k_2})\cap H_{IJ}$ and $\cap_{i\in I}H_i$ are collinear.
\end{itemize}

\begin{center}
\begin{picture}(0,0)%
\includegraphics{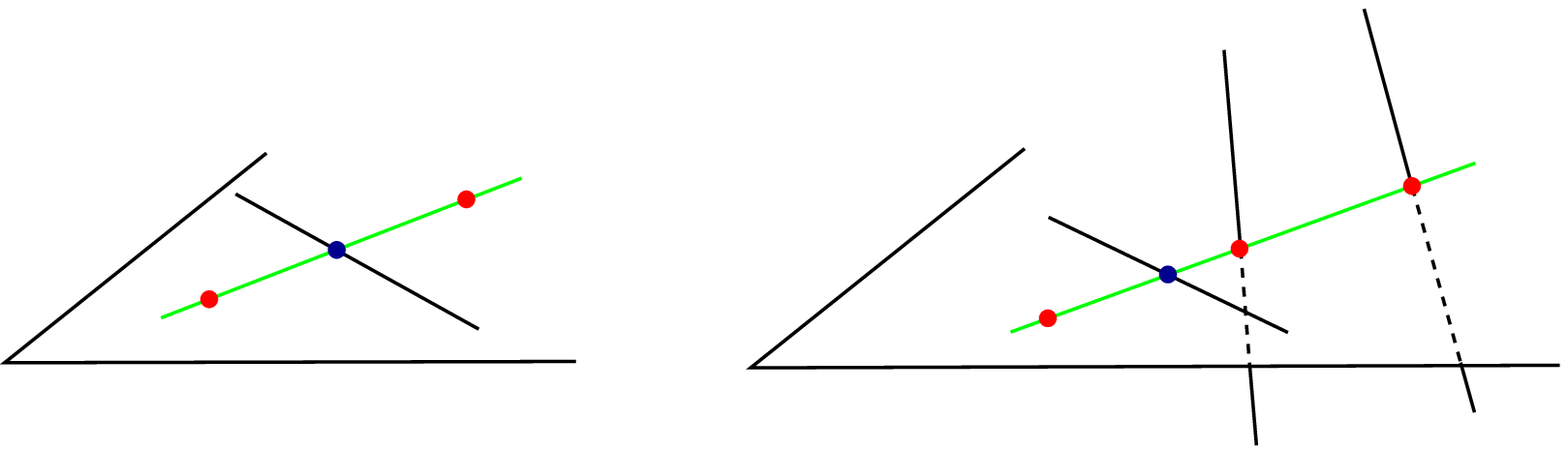}%
\end{picture}%
\setlength{\unitlength}{4144sp}%
\begingroup\makeatletter\ifx\SetFigFont\undefined%
\gdef\SetFigFont#1#2#3#4#5{%
  \reset@font\fontsize{#1}{#2pt}%
  \fontfamily{#3}\fontseries{#4}\fontshape{#5}%
  \selectfont}%
\fi\endgroup%
\begin{picture}(7385,2180)(15,-2312)
\put(987,-1726){\makebox(0,0)[lb]{\smash{{\SetFigFont{6}{7.2}{\familydefault}{\mddefault}{\updefault}{\color[rgb]{0,0,0}$\cap_{i\in I}H_i$}%
}}}}
\put(1190,-1096){\makebox(0,0)[lb]{\smash{{\SetFigFont{6}{7.2}{\familydefault}{\mddefault}{\updefault}{\color[rgb]{0,0,0}$\cap_{j\in J}H_j$}%
}}}}
\put(2187,-1253){\makebox(0,0)[lb]{\smash{{\SetFigFont{6}{7.2}{\familydefault}{\mddefault}{\updefault}{\color[rgb]{0,0,0}$\cap_{k\in K}H_k$}%
}}}}
\put(316,-1812){\makebox(0,0)[lb]{\smash{{\SetFigFont{6}{7.2}{\familydefault}{\mddefault}{\updefault}{\color[rgb]{0,0,0}$H_{IJ}$}%
}}}}
\put(3818,-1842){\makebox(0,0)[lb]{\smash{{\SetFigFont{6}{7.2}{\familydefault}{\mddefault}{\updefault}{\color[rgb]{0,0,0}$H_{IJ}$}%
}}}}
\put(5134,-444){\makebox(0,0)[lb]{\smash{{\SetFigFont{6}{7.2}{\familydefault}{\mddefault}{\updefault}{\color[rgb]{0,0,0}$\cap_{k_1\in K_1}H_{k_1}$}%
}}}}
\put(4966,-1805){\makebox(0,0)[lb]{\smash{{\SetFigFont{6}{7.2}{\familydefault}{\mddefault}{\updefault}{\color[rgb]{0,0,0}$\cap_{i\in I}H_i$}%
}}}}
\put(5777,-223){\makebox(0,0)[lb]{\smash{{\SetFigFont{6}{7.2}{\familydefault}{\mddefault}{\updefault}{\color[rgb]{0,0,0}$\cap_{k_2\in K_2}H_{k_2}$}%
}}}}
\put(4985,-1174){\makebox(0,0)[lb]{\smash{{\SetFigFont{6}{7.2}{\familydefault}{\mddefault}{\updefault}{\color[rgb]{0,0,0}$\cap_{j\in J}H_j$}%
}}}}
\end{picture}%
\end{center}

\begin{lem}\label{start_lem_P3}
 In $\mathbb{P}^3(\mathbb{C})$, if $m\leq 2$, all complements of the form \eqref{hyperbolic_complement} are hyperbolic.
\end{lem}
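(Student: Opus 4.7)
Our plan mirrors the proof of Lemma~\ref{start_lem_P2}: stratify by $m$ and by the types of star-subspaces in $A_{m,3}$, combining Dufresnoy's strengthened diagonal theorem (Theorem~\ref{diagonal_streng}), Cartan's Second Main Theorem, and Picard's theorem. The case $m=0$ is exactly Theorem~\ref{fujimoto-green} for $2n+1=7$ planes in general position. For $m=1$, the unique star-subspace in $A_{1,3}$ is either a star-point ($|I|=3$) or a star-line ($|I|=2$); an entire curve $f$ into the complement must avoid at least $5$ of the $8$ planes in general position, so Theorem~\ref{diagonal_streng} (with $n+p\geq 5$, hence $\lfloor n/p\rfloor\leq 1$) places the image of $f$ inside some line $L\subset\mathbb{P}^3(\mathbb{C})$. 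By the generic hypothesis $L$ meets the remaining planes in distinct points, which $f$ must avoid except possibly the one lying on the star-subspace; Picard's theorem then forces $f$ to be constant.

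For $m=2$, denote the two star-subspaces and their defining index sets by $S_1^*,S_2^*$ and $I_{S_1},I_{S_2}$. When $|I_{S_1}\cup I_{S_2}|\leq 4$, the curve $f$ avoids at least $5$ of the $9$ planes and Theorem~\ref{diagonal_streng} again puts $f$ in a line, finished by Picard. The delicate sub-cases are two disjoint star-points ($|I_A\cup I_B|=6$), two star-points sharing one index ($|I_A\cup I_B|=5$), and a star-point plus a disjoint star-line ($|I_A\cup I_B|=5$), where $f$ strictly avoids only $3$ or $4$ planes. Here we first aim to show $f$ is linearly degenerate. Cartan's Second Main Theorem applied to the nine planes gives
\[
5\,T_f(r)\,\leq\,\sum_{i=1}^9 N_f^{[3]}(r,H_i)+S_f(r),
\]
and the estimate analogous to \eqref{es at point A,z}, namely $\sum_{i\in I_A}\min\{\ord_z(h_i\circ f),3\}\leq 7\,\min_{i\in I_A}\ord_z(h_i\circ f)$ at each $z\in f^{-1}(A)$ together with its analog at $f^{-1}(B)$ (or along a star-line), bounds the right-hand side by $7(N_f(r,A)+N_f(r,B))+S_f(r)$; one then couples this with $N_f(r,A)+N_f(r,B)\leq T_f(r)+O(1)$ obtained by testing against a diagonal hyperplane through both star-subspaces (furnished by the generic condition), together with further Cartan-type inequalities applied to sub-collections of the nine planes adapted to each star-subspace, to force $f\subset\Pi$ for some plane $\Pi$. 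Once $f\subset\Pi$, we trace the configuration onto $\Pi$: any star-subspace contained in (or meeting) $\Pi$ makes three or two of the nine restricted lines concurrent, so Theorem~\ref{-hyperplane-not-in-general-position} confines $f$ to a single line of $\Pi$ and Picard concludes; otherwise the nine lines in $\Pi$ are in general position by preservation of the generic hypothesis and Theorem~\ref{fujimoto-green} applies since $9>2\cdot 2+1$.

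The main obstacle is the linear-degeneracy step in the delicate sub-cases of $m=2$. The multiplicity factor $7$ at a triple intersection in $\mathbb{P}^3$ exceeds the Cartan deficit $q-n-1=5$, so the direct estimate with a single plane through $A$ and $B$ only yields $5\,T_f\leq 7\,T_f+S_f$, which is not contradictory. Moreover, the ratio of vanishing multiplicity to degree for a hypersurface at two points is bounded by one, so improving the bound on $N_f(r,A)+N_f(r,B)$ via a single auxiliary hypersurface is impossible; several Cartan inequalities on different sub-families of the nine hyperplanes must therefore be combined, supplemented by the diagonal hyperplanes from the generic condition, to constrain $N_f(r,A)$ and $N_f(r,B)$ separately and obtain a contradiction in the linearly nondegenerate case. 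Once linear degeneracy is secured, the reduction to the toolkit of Lemma~\ref{start_lem_P2} (Theorem~\ref{-hyperplane-not-in-general-position}, Theorem~\ref{fujimoto-green}, and Picard) closes the argument.
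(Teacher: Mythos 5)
Your overall architecture (Brody reduction, stratification by the type of $A_{m,3}$, easy cases via Theorems~\ref{diagonal_streng} and \ref{diagonal} plus Picard, hard cases via Cartan) matches the paper, and your $m=0$, $m=1$, and ``$f$ avoids at least five planes'' sub-cases of $m=2$ are essentially the paper's arguments. However, in the delicate sub-cases of $m=2$ (a point plus a disjoint star-line, and two points spanning five or six indices) there is a genuine gap, and you have in fact located it yourself without closing it. As you observe, the local multiplicity factor at a triple point is $7$, which exceeds the Cartan deficit $q-n-1=5$, and a single auxiliary hypersurface through $A$ and $B$ cannot have multiplicity-to-degree ratio above $1$; so the chain $5\,T_f\leq 7\,(N_f(r,A)+N_f(r,B))+S_f\leq 7\,T_f+S_f$ proves nothing. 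Your proposed remedy --- ``several Cartan inequalities on different sub-families of the nine hyperplanes, supplemented by diagonal hyperplanes'' --- is not a proof: applying the Second Main Theorem to sub-families only weakens the left-hand side, the truncation level stays at $3$, and nothing in that toolkit lowers the factor $7$ below $5$ per point. No contradiction can be extracted this way.

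The missing idea is the one the paper builds the whole $m=2$ argument on: after the Brody reduction, the curve omits $3$ or $4$ coordinate hyperplanes, so by Theorem~\ref{omit n hyperplanes} and Theorem~\ref{omit-hyperplanes} it admits an explicit reduced representation $[1:e^{\lambda_1 z+\mu_1}:e^{\lambda_2 z+\mu_2}:e^{\lambda_3 z+\mu_3}]$ (case \textbf{(a)}) or $[1:e^{\lambda_1 z+\mu_1}:e^{\lambda_2 z+\mu_2}:g]$ (case \textbf{(b3)}), as in \eqref{form of f_case1} and \eqref{-form-of-f-case-2}. This rigidity is what makes the counting work: it forces the vanishing orders to be constant along each fiber $f^{-1}(A)$, it shows via an elementary linear-algebra argument that at most one of the three planes through $A$ can be tangent to order $\geq 3$ (cf.\ \eqref{-ord-z-h2,h3-<=2}, \eqref{-min-order-hi-hj-less-than-2}), cutting the factor from $7$ down to $5$ or $6$, and it produces auxiliary planes through $AB$ that are tangent to $f$ to order $2$ at $A$ or $B$, so that degenerate quadrics, cubics and quartics through $A$ and $B$ pick up multiplicity strictly greater than their degree would naively allow. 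Only with these refined local estimates does the Second Main Theorem close (e.g.\ $5\,T_f\leq 5\,T_f-N_f(r,l^*)+S_f$, forcing $N_f(r,l^*)=S_f$ and then a secondary contradiction $5\,T_f\leq 3\,T_f+S_f$). Separately, note that the paper's logic runs opposite to your phrasing: it first \emph{rules out} linear degeneracy by ad hoc planar arguments (using the generic condition, Theorem~\ref{-hyperplane-not-in-general-position} and Lemma~\ref{start_lem_P2}), and then derives the contradiction for a linearly \emph{nondegenerate} $f$; your plan to prove degeneracy first and finish in the plane founders on exactly the estimate you flagged. Without the exponential normal form, the hard half of $m=2$ remains unproved.
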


\begin{proof}
Without loss of generality, we can assume that $A_{m,3}$ is a set of $m$ elements belonging to:
\[
\big(
\cup_{1\leq i_1<i_2\leq 7+m}\,
(
H_{i_1}
\cap 
H_{i_2}
)^*
\big)\,
\bigcup\,
\big(
\cup_{1\leq i_1<i_2<i_3\leq 7+m}\,
H_{i_1}
\cap 
H_{i_2}
\cap H_{i_3}
\big).
\]
Suppose to the contrary that there exists a Brody curve $f\colon\mathbb{C}\rightarrow \mathbb{P}^3(\mathbb{C})\setminus(\cup_{i=1}^{7+m}H_i\setminus A_{m,3})$.

When $m=1$, the curve $f$ must avoid at least five planes. By Theorem~\ref{diagonal_streng}, its image is contained in some line $L$. By the generic condition, the number of intersection points between $L$ and $\cup_{i=1}^8 H_i\setminus A_{1,3}$ is at least $3$. 
By Picard's theorem, $f$ must be constant, which is a contradiction.

Next, we consider the case $m=2$. If $A_{2,3}=\{l_1^*,l_2^*\}$ where $l_1,l_2$ are lines, then the curve $f$ avoids five planes, say $\{H_i\}_{1\leq i\leq 5}$. By 
Theorems~\ref{diagonal_streng} and \ref{diagonal}, its image lands in some line $\mathcal{L}$, which is contained in a diagonal plane $\mathcal{P}$ of the family $\{H_i\}_{1\leq i\leq 5}$. We may assume that the plane $\mathcal{P}$ passes through the point $H_1\cap H_2\cap H_3$ and contains the line $H_4\cap H_5$. If the line $\mathcal{L}$ does not pass through the point $H_1\cap  H_2\cap H_3$, then it intersects $\{H_i\}_{1\leq i\leq 3}$ in three distinct points, hence $f$ is constant by Picard's theorem. Thus $\mathcal{L}$ must pass through the point $H_1\cap H_2\cap H_3$. In the plane $\mathcal{P}$, the curve $f$ can pass through the points $l_1\cap\mathcal{P}$ and $l_2\cap\mathcal{P}$. But by the generic condition, cf.~\textbf{(2)} above, the three points $H_1\cap H_2\cap H_3$, $l_1\cap\mathcal{P}$, $l_2\cap\mathcal{P}$ are not collinear. Hence, $f(\mathbb{C})$ is contained in a complement of at least three points in the line $\mathcal{L}$, which is impossible by Picard's theorem.

Two substantial cases remain:
\begin{itemize}
\smallskip\item[\textbf{(a)}] $A_{2,3}=\{A,l^*\}$, where $A$ is a point and $l$ is a line;
\smallskip\item[\textbf{(b)}] the set $A_{2,3}$ consists of two points.
\end{itemize}

We treat case \textbf{(a)}. If both $A$ and $l$ are contained in some common plane $H_i$, then $f$ avoids five planes. By Theorem~\ref{diagonal}, its image must be contained in some diagonal plane, which does not contain the point $A$ by the generic condition. Hence $f$ must avoid seven planes in general position, which is absurd by Theorem~\ref{fujimoto-green}. Thus, we can assume that $A=H_1\cap H_2\cap H_3$ and $l^*=(H_4\cap H_5)\setminus \cup_{i\not= 4,5} H_i$. Hence $f$ avoids four planes $H_i$ ($6\leq i\leq 9$).

First, we show that $f$ is linearly nondegenerate. Suppose to the contrary that $f(\mathbb{C})$ is contained in some plane $\mathcal{P}$. If $A\not\in\mathcal{P}$, then $f$ also avoids $H_1$, $H_2$, $H_3$, which is impossible by Theorem~\ref{fujimoto-green}. Hence the plane $\mathcal{P}$ must pass through the point $A$. If $f(\mathbb{C})$ is contained in some line $\mathcal{L}\subset\mathcal{P}$, then $\mathcal{L}$ must also pass through $A$, for the same reason. Note that the number of intersection points between $\mathcal{L}$ and $\{H_i\}_{6\leq i\leq 9}$ is at least $2$, and it equals $2$ only if either $\mathcal{L}$ passes through some point $H_{i_1}\cap H_{i_2}\cap H_{i_3}$ ($6\leq i_1<i_2<i_3\leq 9$) or $\mathcal{L}$ intersects two lines $H_{i_1}\cap H_{i_2}$, $H_{i_3}\cap H_{i_4}$ ($\{i_1,i_2,i_3,i_4\}=\{6,7,8,9\}$). If $\mathcal{L}$ has empty intersection with $H_4\cap H_5$, then $f$ avoids at least four points in the line $\mathcal{L}$, hence it is constant. If $\mathcal{L}$ intersects $H_4\cap H_5$, then by considering the diagonal plane passing through $A$ and containing $H_4\cap H_5$, the two cases where $|\mathcal{L}\cap \{H_i\}_{6\leq i\leq 9}|=2$ are excluded by the generic condition. Thus $f$ always avoids three distinct points in $\mathcal{L}$, hence it is constant.

Consequently, we can assume that $f$ does not land in any line in the plane $\mathcal{P}$. There are two possible positions of $\mathcal{P}$:
\begin{itemize}
\smallskip\item[\textbf{(a1)}] it is a diagonal plane containing $A$ and some line in $\cup_{6\leq i_1<i_2\leq 9}\,H_{i_1}\cap H_{i_2}$;
\smallskip\item[\textbf{(a2)}] it does not contain any line in $\cup_{6\leq i_1< i_2\leq 9}\, H_{i_1}\cap H_{i_2}$.
\end{itemize}

In case \textbf{(a1)}, assume that $\mathcal{P}$ contains the line $H_6\cap H_7$. Among $\{H_i\cap\mathcal{P}\}_{1\leq i\leq 9}$, two lines $H_6\cap \mathcal{P}$, $H_7\cap\mathcal{P}$ coincide, and dropping the line $H_1\cap\mathcal{P}$, by the generic condition, it remains seven lines $\{H_i\cap\mathcal{P}\}_{i\not=1,7}$ in general position in $\mathcal{P}$.
\begin{center}
\begin{picture}(0,0)%
\includegraphics{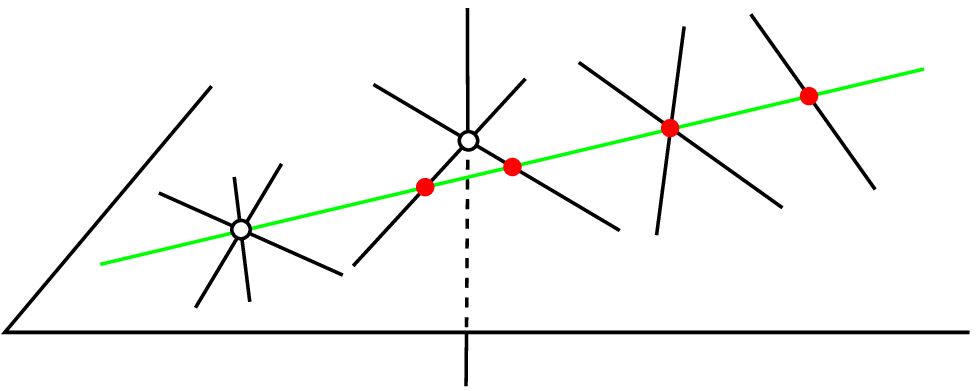}%
\end{picture}%
\setlength{\unitlength}{4144sp}%
\begingroup\makeatletter\ifx\SetFigFont\undefined%
\gdef\SetFigFont#1#2#3#4#5{%
  \reset@font\fontsize{#1}{#2pt}%
  \fontfamily{#3}\fontseries{#4}\fontshape{#5}%
  \selectfont}%
\fi\endgroup%
\begin{picture}(4454,1773)(609,-2444)
\put(3960,-1742){\makebox(0,0)[lb]{\smash{{\SetFigFont{10}{12.0}{\familydefault}{\mddefault}{\updefault}{\color[rgb]{0,0,0}$H_6\cap H_7$}%
}}}}
\put(2790,-2068){\makebox(0,0)[lb]{\smash{{\SetFigFont{10}{12.0}{\familydefault}{\mddefault}{\updefault}{\color[rgb]{0,0,0}$H_4\cap H_5$}%
}}}}
\put(1380,-1737){\makebox(0,0)[lb]{\smash{{\SetFigFont{10}{12.0}{\familydefault}{\mddefault}{\updefault}{\color[rgb]{0,0,0}$A$}%
}}}}
\put(2577,-1158){\makebox(0,0)[lb]{\smash{{\SetFigFont{10}{12.0}{\familydefault}{\mddefault}{\updefault}{\color[rgb]{0,0,0}$B$}%
}}}}
\put(844,-2108){\makebox(0,0)[lb]{\smash{{\SetFigFont{10}{12.0}{\familydefault}{\mddefault}{\updefault}{\color[rgb]{0,0,0}$\mathcal{P}$}%
}}}}
\end{picture}%
\end{center}

\noindent
Letting $B$ be the intersection point of the line $l=H_4\cap H_5$ with the plane $\mathcal{P}$, the curve $f$ lands in $\mathcal{P}\setminus (\cup_{1\leq i\leq 9}H_i\cap\mathcal{P})\setminus\{A,B\}))$. As in \eqref{-last-estimate-conclude1}, $f(\mathbb{C})$ is contained in some line, which is a contradiction.

Next, consider case \textbf{(a2)}.
\begin{center}
\begin{picture}(0,0)%
\includegraphics{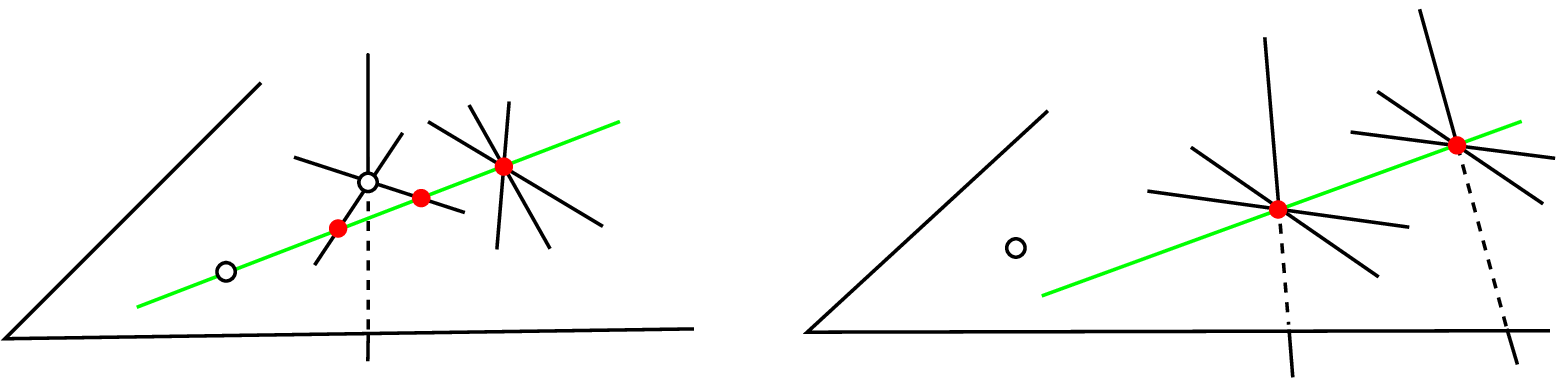}%
\end{picture}%
\setlength{\unitlength}{4144sp}%
\begingroup\makeatletter\ifx\SetFigFont\undefined%
\gdef\SetFigFont#1#2#3#4#5{%
  \reset@font\fontsize{#1}{#2pt}%
  \fontfamily{#3}\fontseries{#4}\fontshape{#5}%
  \selectfont}%
\fi\endgroup%
\begin{picture}(7131,1786)(24,-2149)
\put(887,-1626){\makebox(0,0)[lb]{\smash{{\SetFigFont{11}{13.2}{\familydefault}{\mddefault}{\updefault}{\color[rgb]{0,0,0}$A$}%
}}}}
\put(2538,-1234){\makebox(0,0)[lb]{\smash{{\SetFigFont{11}{13.2}{\familydefault}{\mddefault}{\updefault}{\color[rgb]{0,0,0}$H_6\cap H_7\cap H_8$}%
}}}}
\put(271,-1883){\makebox(0,0)[lb]{\smash{{\SetFigFont{11}{13.2}{\familydefault}{\mddefault}{\updefault}{\color[rgb]{0,0,0}$\mathcal{P}$}%
}}}}
\put(1742,-627){\makebox(0,0)[lb]{\smash{{\SetFigFont{11}{13.2}{\familydefault}{\mddefault}{\updefault}{\color[rgb]{0,0,0}$H_4\cap H_5$}%
}}}}
\put(6593,-510){\makebox(0,0)[lb]{\smash{{\SetFigFont{11}{13.2}{\familydefault}{\mddefault}{\updefault}{\color[rgb]{0,0,0}$H_8\cap H_9$}%
}}}}
\put(4641,-1448){\makebox(0,0)[lb]{\smash{{\SetFigFont{11}{13.2}{\familydefault}{\mddefault}{\updefault}{\color[rgb]{0,0,0}$A$}%
}}}}
\put(3992,-1859){\makebox(0,0)[lb]{\smash{{\SetFigFont{11}{13.2}{\familydefault}{\mddefault}{\updefault}{\color[rgb]{0,0,0}$\mathcal{P}$}%
}}}}
\put(5122,-619){\makebox(0,0)[lb]{\smash{{\SetFigFont{11}{13.2}{\familydefault}{\mddefault}{\updefault}{\color[rgb]{0,0,0}$H_6\cap H_7$}%
}}}}
\end{picture}%
\end{center}

\noindent
If $\mathcal{P}$ contains some point in $\cup_{6\leq i_1< i_2<i_3\leq 9}\, H_{i_1}\cap H_{i_2}\cap H_{i_3}$, say $H_6\cap H_7\cap H_8$, then the curve $f$ avoids three lines $H_i\cap\mathcal{P}$ ($6\leq i\leq 8$), which are not in general position. By Theorem~\ref{-hyperplane-not-in-general-position}, $f(\mathbb{C})$ must be contained in some line, which is a contradiction. Therefore, $\mathcal{P}$ does not contain any point in $\cup_{6\leq i_1< i_2<i_3\leq 9}\, H_{i_1}\cap H_{i_2}\cap H_{i_3}$. But then the curve $f$ avoids a collection of four lines $\{H_i\cap\mathcal{P}\}_{6\leq i\leq 9}$, which are in general position. By Theorem~\ref{diagonal}, its image must land in some diagonal line of this family, which is a contradiction.

Still in case \textbf{(a)}, we can therefore assume that $f$ is linearly nondegenerate. Assume that the omitted planes $H_6$, $H_7$, $H_8$, $H_9$ are given
in the homogeneous coordinates $[z_0:z_1:z_2:z_3]$ by equations $\{z_i=0\}$ ($0\leq i\leq 3$). By Theorems~\ref{omit-hyperplanes}, $f$ has a reduced representation of the form
\begin{equation}
\label{form of f_case1}
[1:e^{\lambda_1\,z+\mu_1}:e^{\lambda_2\,z+\mu_2}:e^{\lambda_3\,z+\mu_3}],
\end{equation}
where $\lambda_i$, $\mu_i$ are constants with $\lambda_i\not=0$ ($1\leq i\leq 3$ and $\lambda_i\not=\lambda_j$ ($i\not=j$). Let $\mathcal{D}$ be the diagonal plane passing through the point $A=H_1\cap H_2\cap H_3$ and containing the line $l=H_4\cap H_5$. By similar arguments as in Lemma~\ref{start_lem_P2}, cf. \eqref{-compare-the-order_at-each-point}, \eqref{-compare-the-counting-function-at-points-and-line-L}, we can show that
\begin{equation}
\label{-estimate-between-A-l_alpha}
N_f(r,A)
+
N_f(r,l^*)
\,
\leq
\,
N_f(r,\mathcal{D}).
\end{equation}
From the elementary inequality
\[
\min
\,
\{\ord_z(h_4\circ f),3\}
+
\min
\,
\{\ord_z(h_5\circ f),3\}
\,
\leq
\,
4
\min_{4\leq i\leq 5}
\,
\ord_z(h_i\circ f)\quad\quad {\scriptstyle(z\,\in\,f^{-1}(l^*))},
\]
by taking the sum on disks and then by integrating, we get
\begin{equation}
\label{-estimate-at-line-l}
N_f^{[3]}(r,H_4)
+
N_f^{[3]}(r,H_5)
\,
\leq
\,
4
\,
N_f(r,l^*).
\end{equation}

Next, we try to bound $N_f^{[3]}(r,H_i)$ 
($1\leq i\leq 3$) from above in terms of $N_f(r,A)$. Since $f$ is of the form \eqref{form of f_case1}, for any $z_1$, $z_2$ $\in f^{-1}(A)$, we have
\[
f^{(k)}(z_1)
\,
=
\,f^{(k)}(z_2)\quad\quad
{\scriptstyle (k\,\in\,\mathbb{N})},
\]
hence
\begin{equation}
\label{-ord-does-not-change}
\ord_{z_1}(h_i\circ f)
\,
=
\,
\ord_{z_2}(h_i\circ f)
\quad\quad 
{\scriptstyle (1\,\leq\,i\,\leq\,3)}.
\end{equation}
Thus, it suffices to consider the two cases:
\begin{itemize}
\smallskip \item[\textbf{(a3)}] $\ord_z (h_i\circ f)\leq 2$ for all $1\leq i\leq 3$ and for all $z\in f^{-1}(A)$;
\smallskip \item[\textbf{(a4)}] $\ord_z(h_i\circ f)\geq 3$ for some $i$ with $1\leq i\leq 3$ and for all $z\in f^{-1}(A)$.
\end{itemize}

In case \textbf{(a3)}, the elementary inequality 
\[
\sum_{i=1}^3
\min
\,
\{\ord_z(h_i\circ f),3\}
\,
\leq
\,
5
\min_{1\leq i\leq 3}
\,
\ord_z(h_i\circ f)\quad\quad
{\scriptstyle(z\,\in\,f^{-1}(A))},
\]
yields
\begin{equation}
\label{-estimate-at-point-A-case-1}
N_f^{[3]}(r,H_1)
+
N_f^{[3]}(r,H_2)
+
N_f^{[3]}(r,H_3)
\,
\leq
\,
5\,N_f(r,A).
\end{equation}
Since $f$ is linearly nondegenerate, we can proceed similarly as in \eqref{-last-estimate-conclude1}
\begin{align}\label{-smt-m=2-case1.1}
5\,T_f(r)
&
\,
\leq
\,
\sum_{i=1}^{9}
N_f^{[3]}(r,H_i)
+
S_f(r)\notag\\
&
\,
\leq
\,
5\,N_f(r,A)
+
4\,N_f(r,l^*)
+
S_f(r)\notag\\
&
\,
=
\,
5
\,
\big(
N_f(r,A)
+
N_f(r,l^*)\big)
-
N_f(r,l^*)
+S_f(r)\notag\\
&\,
\leq
\,
5\,N_f(r,\mathcal{D})
-
N_f(r,l^*)
+
S_f(r)\notag\\
&
\,
\leq
\,
5\,T_f(r)
-
N_f(r,l^*)
+
S_f(r).
\end{align}
This implies
\[
N_f(r,l^*)
=
S_f(r)
\]
and hence, by \eqref{-estimate-at-line-l}, we have
\[
N_f^{[3]}(r,H_4)
+
N_f^{[3]}(r,H_5)
\,
=
\,
S_f(r).
\]
Therefore, the first inequality of \eqref{-smt-m=2-case1.1} can be rewritten as
\begin{equation*}
5
\,
T_f(r)
\,
\leq
\,
\sum_{i=1}^3
N_f^{[3]}(r,H_i)
+
S_f(r).
\end{equation*}
By the First Main Theorem, the right-hand side of the above inequality is bounded from above by $3 \,T_f(r)+S_f(r)$. Thus we get
\[
5
\,
T_f(r)
\,
\leq
\,
3\,T_f(r)+S_f(r),
\]
which is absurd.

Next, we consider case \textbf{(a4)}. Assume that $\ord_z (h_1\circ f)\geq 3$ for all $z\in f^{-1}(A)$. Since $f$ is of the form \eqref{form of f_case1}, we claim that
\begin{equation}
\label{-ord-z-h2,h3-<=2}
\ord_z (h_i\circ f)
\,
\leq
\,
2\quad\quad {\scriptstyle(z\in \,f^{-1}(A),\quad\,2\,\leq\, i\,\leq\,3)}.
\end{equation}
Indeed, if $\ord_z(h_i\circ f)\geq 3$ for some $z\in f^{-1}(A)$ and for some $2\leq i\leq 3$, say $i=2$, then $(e^{\lambda_1\, z+\mu_1},e^{\lambda_2\, z+\mu_2},e^{\lambda_3\, z+\mu_3})$ is a solution of a system of six linear equations of the form
\[
\left\{
\aligned
0
&
\,=\,
a_{10}+a_{11}\,u+a_{12}\,v+a_{13}\,w,
\\
0
&
\,=\,
\ \ \ \ \ \ \ \
a_{11}\,\lambda_1\,u+a_{12}\,\lambda_2\,v+a_{13}\,\lambda_3\,w,
\\
0
&
\,=\,
\ \ \ \ \ \ \ \
a_{11}\,\lambda_1^2\,u+a_{12}\,\lambda_2^2\,v+a_{13}\,\lambda_3^2\,w,
\\
0
&
\,=\,
a_{20}+a_{21}\,u+a_{22}\,v+a_{23}\,w,
\\
0
&
\,=\,
\ \ \ \ \ \ \ \
a_{21}\,\lambda_1\,u+a_{22}\,\lambda_2\,v+a_{23}\,\lambda_3\,w,
\\
0
&
\,=\,
\ \ \ \ \ \ \ \
a_{21}\,\lambda_1^2\,u+a_{22}\,\lambda_2^2\,v+a_{23}\,\lambda_3^2\,w,\\
\endaligned\right.
\]
where $u$, $v$, $w$ are unknowns, and where $a_{ij}$ ($0\leq i\leq 3$) are the coefficients of $h_i$ ($1\leq i\leq 2$) in the homogeneous coordinate $[z_0:z_1:z_2:z_3]$. Since $\lambda_i$ are nonzero distinct constants, this forces the two linear forms $h_1$, $h_2$ to be linearly dependent, which is a contradiction.

It follows from \eqref{-ord-z-h2,h3-<=2} that
\begin{equation*}
\min
\,
\{\ord_z (h_2\circ f),3\}
+
\min
\,
\{\ord_z (h_3\circ f),3\}
\,
\leq 
\,
3
\min_{1\leq i\leq 3}
\,
\ord_z(h_i\circ f)\quad\quad {\scriptstyle(z\in \,f^{-1}(A))}.
\end{equation*}
By taking the sum on disks and by integrating, we get
\begin{equation}
\label{-estimate-counting-function-at-point-A-and-H2-H-3}
N_f^{[3]}(r,H_2)
+
N_f^{[3]}(r,H_3)
\,
\leq
\,
3\,N_f(r,A).
\end{equation}
We may therefore proceed similarly as in \eqref{-smt-m=2-case1.1}
\begin{align}\label{-using-smt-m=2-case-point+line-ord_z>=3}
5
\,
T_f(r)
&
\,
\leq
\,
\sum_{i=1}^{9}N_f^{[3]}(r,H_i)+S_f(r)\notag\\
&
\,
\leq
\,
N_f^{[3]}(r,H_1)+ 3\,N_f(r,A)+4\,N_f(r,l^*)+S_f(r)\notag\\
&
\,
\leq
\,
N_f(r,H_1)
+
4
\,
\big(
N_f(r,A)
+
N_f(r,l^*)
\big)
-
N_f(r,A)
+
S_f(r)\notag\\
&
\,
\leq 
\,
T_f(r)
+
4
\,
N_f(r,\mathcal{D})
-
N_f(r,A)
+
S_f(r)\notag\\
&
\,
\leq
\,
5
\,
T_f(r)
-
N_f(r,A)
+
S_f(r).
\end{align}
This implies
\[
N_f(r,A)
\,
=
\,
S_f(r).
\]
By \eqref{-estimate-counting-function-at-point-A-and-H2-H-3}, we have
\[
N_f^{[3]}(r,H_2)
+
N_f^{[3]}(r,H_3)
\,
=
\,
S_f(r).
\]
Hence we can rewrite the first inequality of \eqref{-using-smt-m=2-case-point+line-ord_z>=3} and use First Main Theorem to get a contradiction
\begin{align*}
5\,T_f(r)
&
\,
\leq
\,
N_f^{[3]}(r,H_1)
+
N_f^{[3]}(r,H_4)
+
N_f^{[3]}(r,H_5)
+
S_f(r)\\
&
\,
\leq
\,
3\,T_f(r)
+
S_f(r).
\end{align*}

Let us consider case \textbf{(b)}. Assume now $A_{2,3}=\{A,B\}$, where $A$, $B$ are two points contained in $\cup_{1\leq i_1<i_2<i_3\leq 9}\,H_{i_1}\cap H_{i_2}\cap H_{i_3}$. There are three possibilities for the positions of $A$ and $B$:
\begin{itemize}
\smallskip\item[\textbf{(b1)}] both $A$ and $B$ are contained in some line $H_i\cap H_j$;
\smallskip\item[\textbf{(b2)}] both $A$ and $B$ are contained in some plane $H_i$ but they are not contained in any line $H_i\cap H_j$;
\smallskip\item[\textbf{(b3)}] there is no plane $H_i$ containing both points $A$ and $B$.
\end{itemize}

In case \textbf{(b1)}, the curve $f$ avoids a family of five planes and, therefore, its image is contained in some diagonal plane of this family, which contains neither $A$ nor $B$ by the generic condition. Hence $f$ avoids all planes $H_i$, which is absurd by Theorem~\ref{fujimoto-green}.

Next, we consider case \textbf{(b2)}. Assume that $A=H_1\cap H_2\cap H_3$ and $B=H_1\cap H_4\cap H_5$, hence $f$ avoids  the $4$ planes $H_i$ ($6\leq i\leq 9$). Similarly as in case \textbf{(a)}, the generic condition allows us to assume that $f$ is linearly nondegenerate.

Since $f$ avoids four planes, it is of the form \eqref{form of f_case1} in some affine coordinates on $\mathbb{P}^3(\mathbb{C})$. Since $f$ has no singular point, we have
\begin{align}
\label{-order-equal-1}
\min_{i\in\{1,2,3\}}
\,
\ord_z (h_i\circ f)
&
\,
=
\,
1 \quad\quad {\scriptstyle(z\in \,f^{-1}(A))},\notag\\
\min_{i\in\{1,4,5\}}
\,
\ord_z (h_i\circ f)
&
\,
=
\,
1\quad\quad {\scriptstyle(z\in \,f^{-1}(B))}.
\end{align}
Hence by using these two equalities together 
with~\eqref{-ord-z-h2,h3-<=2},
\begin{equation*}
\sum_{i\in\{1,2,3\}}
\min\,\{\ord_z(h_i\circ f),3\}
\,
\leq
\,
6
\,
=
\,
6
\,
\min_{i\in\{1,2,3\}}
\,
\ord_z(h_i\circ f),\quad\quad {\scriptstyle(z\,\in\,f^{-1}(A))},
\end{equation*}
\begin{equation*}
\sum_{i\in\{1,4,5\}}
\min\,\{\ord_z(h_i\circ f),3\}
\,
\leq
\,
6
\,
=
\,
6
\,
\min_{i\in\{1,4,5\}}
\,
\ord_z(h_i\circ f),\quad\quad {\scriptstyle(z\,\in\,f^{-1}(B))}.
\end{equation*}
Thus, by taking the sum on disks of both sides of these inequalities and by integrating,
\begin{equation*}
\sum_{i=1}^5
N_f^{[3]}(r,H_i)
\,
\leq
\,
6
\,
\big(
N_f(r,A)
+
N_f(r,B)
\big).
\end{equation*}

Next, using again that $f$ is of the form \eqref{form of f_case1}, one can find two planes $\mathcal{P}_1=\{\mathsf{p}_1=0 \}$, $\mathcal{P}_2=\{\mathsf{p}_2=0 \}$ containing the line $AB$ such that
\[
\ord_z(\mathsf{p}_1\circ f)
\,
\geq
\,
2
\quad\quad {\scriptstyle(z\,\in\,f^{-1}(A))},
\]
\[
\ord_z(\mathsf{p}_2\circ f)
\,
\geq
\,
2
\quad\quad {\scriptstyle(z\,\in\,f^{-1}(B))}.
\]
Let $\mathcal{Q}=\{\mathsf{q} = \mathsf{p}_1\mathsf{p}_2=0\}$ be the degenerate quadric $\mathcal{P}_1\cup\mathcal{P}_2$. We have
\[
3
\,
=
\,
3
\,
\min_{i\in\{1,2,3\}}
\,
\ord_z(h_i\circ f)
\,
\leq
\,
\ord_z(\mathsf{p}_1\circ f)+\ord_z(\mathsf{p}_2\circ f)
\,
=
\,
\ord_z(\mathsf{q}\circ f) \quad\quad {\scriptstyle(z\,\in\,f^{-1}(A))},
\]
\[
3
\,
=
\,
3
\,
\min_{i\in\{1,4,5\}}\,
\ord_z(h_i\circ f)
\,
\leq
\,
\ord_z(\mathsf{p}_1\circ f)
+
\ord_z(\mathsf{p}_2\circ f)
\,
=
\,
\ord_z(\mathsf{q}\circ f) \quad\quad {\scriptstyle(z\,\in\,f^{-1}(B))},
\]
which implies, by integrating, that
\begin{equation*}
3
\,
\big(
N_f(r,A)
+
N_f(r,B)
\big)
\,
\leq
\,
N_f(r,\mathcal{T}).
\end{equation*}
We proceed similarly as above to get a contradiction
\begin{align*}
5
\,
T_f(r)
&
\,
\leq
\,
\sum_{i=1}^{9}
N_f^{[3]}(r,H_i)
+
S_f(r)\\
&
\,
\leq
\,
6
\,
\big(
N_f(r,A)
+
N_f(r,B)
\big)
+S_f(r)\\
&
\,
\leq
\,
2
\,
N_f(r,\mathcal{T})
+
S_f(r)\\
&
\,
\leq
\,
4
\,
T_f(r)
+
S_f(r).
\end{align*}

Now, we consider case \textbf{(b3)}. Assume that $A=H_1\cap H_2\cap H_3$, $B=H_4\cap H_5\cap H_6$, when $f$ avoids the three planes $H_7$, $H_8$, $H_9$. If $f(\mathbb{C})$ is contained in some plane $\mathcal{P}$, then it is not hard to see that $\mathcal{P}$ must pass through both $A$ and $B$. Furthermore, by using Theorem~\ref{-hyperplane-not-in-general-position}, one can show that $\mathcal{P}$ does not pass through the point $C=H_7\cap H_8\cap H_9$. One can then always find $7$ lines in general position in $\mathcal{P}$ among $\{H_i\cap\mathcal{P}\}_{1\leq i\leq 9}$. Hence one can use similar arguments as in Lemma~\ref{start_lem_P2}, case $m=2$, to get a contradiction. Thus, we can suppose that $f$ is linearly nondegenerate.

Assume that the omitted planes $H_7$, $H_8$, $H_9$ are given
in the homogeneous coordinates $[z_0:z_1:z_2:z_3]$ by the equations $\{z_0=0\}$, $\{z_1=0\}$, $\{z_2=0\}$. Since $\{H_i\}_{1\leq i\leq 9}$ is a family of planes in general position, the planes $H_i$ ($1\leq i\leq 6$) are given by
\[
h_i
\,
=
\,
\sum_{j=0}^3 a_{ij}\, z_j
\,
=
\,
0,
\]
with $a_{i3}\not=0$ ($1\leq i\leq 6$). Set $l_{i_1,i_2}=H_{i_1}\cap H_{i_2}$ ($1\leq i_1<i_2\leq 3$), $l_{j_1,j_2}=H_{j_1}\cap H_{j_2}$ ($4\leq j_1< j_2\leq 6$). For $1\leq i<j\leq3$ or $4\leq i<j\leq 6$, let $R_{i,j}=\{r_{i,j}=0\}$ be the plane containing the lines $AB$, $l_{i,j}$ and let $T_{i,j}=\{t_{i,j}=a_{j3}\,h_i-a_{i3}\,h_j=0\}$ be the plane passing through the point $C=[0:0:0:1]$ and containing the line $l_{i,j}$. We note that all $r_{i,j}$, $t_{i,j}$ are linear combinations of $h_i$ and $h_j$ with nonzero coefficients.

Since $f$ avoids three planes, by Theorem~\ref{omit-hyperplanes}  it has a reduced representation of the form
\begin{equation}
\label{-form-of-f-case-2}
[1:e^{\lambda_1\,z+\mu_1}:e^{\lambda_2\,z+\mu_2}:g],
\end{equation}
where $\lambda_1$, $\lambda_2$, $\mu_1$, $\mu_2$ are constants with $\lambda_1\not=\lambda_2$, $\lambda_1,\lambda_2\not=0$ and where $g$ is an entire function. Since $f$ has no singular point, we have
\begin{align}
\label{-order-equal-1}
\min_{1\leq i\leq 3}
\,
\ord_z (h_i\circ f)
&
\,
=
\,
1
\quad\quad {\scriptstyle(z\in \,f^{-1}(A))},\notag\\
\min_{4\leq j\leq 6}
\,
\ord_z (h_j\circ f)
&
\,
=
\,
1
\quad\quad {\scriptstyle(z\in \,f^{-1}(B))}.
\end{align}
Since $f$  is of the form \eqref{-form-of-f-case-2}, we claim that
\begin{align}
\label{-min-order-hi-hj-less-than-2}
\min
\,
\{
\ord_z(h_{i_1}\circ f)
,
\ord_z(h_{i_2}\circ f)
\}
&
\,
\leq
\,
2
\quad\quad {\scriptstyle(z\in \,f^{-1}(A)\quad\,1\,\leq\, i_1\,<\,i_2\,\leq\,3)},\notag\\
\min
\,
\{
\ord_z(h_{j_1}\circ f)
,
\ord_z(h_{j_2}\circ f)
\}
&
\,
\leq
\,
2
\quad\quad {\scriptstyle(z\in \,f^{-1}(B),\quad\,4\,\leq\, j_1\,<\,j_2\,\leq\,6)}.
\end{align}
Indeed, if one of these inequalities does not hold, say $\min\,\{\ord_z(h_1\circ f),\ord_z(h_2\circ f)\}\geq 3$ for some $z\in f^{-1}(A)$, then $z$ is a solution of the following system of equations
\[
\left\{
\aligned
0
&
\,=\,
(
t_{1,2}
\circ
f
)
(z)
,
\\
0
&
\,=\,
(
t_{1,2}
\circ
f
)'
(z),
\\
0
&
\,=\,
(
t_{1,2}
\circ
f
)''
(z).
\endaligned\right.
\]
Equivalently, $(e^{\lambda_1\,z+\mu_1},e^{\lambda_2\, z+\mu_2})$ is a solution of a system of three linear equations of the form
\[
\left\{
\aligned
0
&
\,=\,
(
a_{23}\,a_{10}
-a_{13}\,a_{20}
)
+
(
a_{23}\,a_{11}
-a_{13}\,a_{21}
)
\,
x
\ \ \ \
+
(
a_{23}\,a_{12}
-a_{13}\,a_{22}
)
\,
y,
\\
0
&
\,=\,
\ \ \ \ \ \ \ \ \ \ \ \ \ \ \ \ \ \ \ \ \ \ \ \ \ \ \
(
a_{23}\,a_{11}
-a_{13}\,a_{21}
)
\,
\lambda_1
\,
x
+
(
a_{23}\,a_{12}
-a_{13}\,a_{22}
)
\,
\lambda_2
\,
y,
\\
0
&
\,=\,
\ \ \ \ \ \ \ \ \ \ \ \ \ \ \ \ \ \ \ \ \ \ \ \ \ \ \
(
a_{23}\,a_{11}
-a_{13}\,a_{21}
)
\,
\lambda_1^2
\,
x
+
(
a_{23}\,a_{12}
-a_{13}\,a_{22}
)
\,
\lambda_2^2
\,
y,
\\
\endaligned\right.
\]
where $x$, $y$ are unknowns. Since $\lambda_1\not=\lambda_2$, $\lambda_1, \lambda_2\not=0$, this implies that the two linear forms $h_1$, $h_2$ must be linearly dependent, which is a contradiction.

It follows from \eqref{-order-equal-1} and \eqref{-min-order-hi-hj-less-than-2} that
\begin{align}
\label{-first-estimate-ord_z-h_i}
\sum_{i=1}^3
\min
\,
\{
\ord_z (h_{i}\circ f),
3
\}
&
\,
\leq 
\,
6
\quad\quad 
{\scriptstyle(z\,\in \,f^{-1}(A))},
\notag
\\
\sum_{j=4}^6
\min
\{
\ord_z (h_j\circ f),3
\}
&
\,
\leq
\,
6 \quad\quad {\scriptstyle(z\,\in \,f^{-1}(B))}.
\end{align}
Now we prove the following equality
\begin{claim}
\label{-claim}
\begin{equation}
\label{-equation-counting-A,B-order-case-b31}
T_f(r)
\,
=
\,
N_f(r,A)
+
N_f(r,B)
+
S_f(r).
\end{equation}
\end{claim}
\begin{proof}
Since $f$ is of the form \eqref{-form-of-f-case-2} and since $t_{i,j}$ does not contain the term $x_3$, we have
\begin{align}
\label{-ord-does-not-change-case2}
\ord_{z_1}(t_{i_1,i_2}\circ f)
&
\,
=
\,
\ord_{z_2}(t_{i_1,i_2}\circ f)
\quad\quad 
{\scriptstyle (z_1,z_2\,\in\,f^{-1}(A),\,1\,\leq\,i_1\,<i_2\,\,\leq\,3)},\notag\\
\ord_{z_1}(t_{j_1,j_2}\circ f)
&
\,
=
\,
\ord_{z_2}(t_{j_1,j_2}\circ f)
\quad\quad 
{\scriptstyle (z_1,z_2\,\in\,f^{-1}(B),\,4\,\leq\,j_1\,<j_2\,\,\leq\,6)}.
\end{align}
Thus, it suffices to consider the four cases depending on $f$ and $t_{i,j}$:
\begin{itemize}
\smallskip\item[\textbf{(b3.1)}] $\ord_z(t_{i_1,i_2}\circ f)=1$ for all $1\leq i_1<i_2\leq 3$, for all $z\in f^{-1}(A)$ and $\ord_z(t_{j_1,j_2}\circ f)=1$ for all $4\leq j_1<j_2\leq 6$, for all $z\in f^{-1}(B)$;
\smallskip\item[\textbf{(b3.2)}] $\ord_z(t_{i_1,i_2}\circ f)\geq 2$ for some $1\leq i_1<i_2\leq 3$, for all $z\in f^{-1}(A)$ and $\ord_z(t_{j_1,j_2}\circ f)=1$ for all $4\leq j_1<j_2\leq 6$, for all $z\in f^{-1}(B)$;
\smallskip\item[\textbf{(b3.3)}] $\ord_z(t_{i_1,i_2}\circ f)=1$ for all $1\leq i_1<i_2\leq 3$, for all $z\in f^{-1}(A)$ and $\ord_z(t_{j_1,j_2}\circ f)\geq 2$ for some $4\leq j_1<j_2\leq 6$, for all $z\in f^{-1}(B)$;
\smallskip\item[\textbf{(b3.4)}] $\ord_z(t_{i_1,i_2}\circ f)\geq 2$ for some $1\leq i_1<i_2\leq 3$, for all $z\in f^{-1}(A)$ and $\ord_z(t_{j_1,j_2}\circ f)\geq 2$ for some $4\leq j_1<j_2\leq 6$, for all $z\in f^{-1}(B)$.
\end{itemize}

Consider case \textbf{(b3.1)}. Since $t_{i,j}$ is a linear combination of $h_i$ and $h_j$ with nonzero coefficients, we have
\begin{align*}
\min
\,
\{
\ord_z(h_{i_1}\circ f)
,
\ord_z(h_{i_2}\circ f)
\}
&
\,
=
\,1
\quad\quad {\scriptstyle(z\in \,f^{-1}(A)\quad\,1\,\leq\, i_1\,<\,i_2\,\leq\,3)},\\
\min
\,
\{
\ord_z(h_{j_1}\circ f)
,
\ord_z(h_{j_2}\circ f)
\}
&
\,
=
\,
1
\quad\quad {\scriptstyle(z\in \,f^{-1}(B),\quad\,4\,\leq\, j_1\,<\,j_2\,\leq\,6)}.
\end{align*}
Using these equalities together with \eqref{-order-equal-1}, we get
\begin{align} 
\label{-ord-totall<=5}
\sum_{i=1}^3
\min
\,
\{
\ord_z (h_{i}\circ f)
,
3
\}
&
\,
\leq
\,
5
\,
=
\,
5
\,
\min_{1\leq i\leq 3}
\,
\ord_z (h_i\circ f)\quad\quad {\scriptstyle(z\in \,f^{-1}(A))},\notag\\
\sum_{i=4}^6
\min
\,
\{\ord_z (h_i\circ f),3\}
&
\,
\leq
\,
5
\,
=
\,
5
\,
\min_{4\leq i\leq 6}
\,
\ord_z (h_i\circ f) \quad\quad {\scriptstyle(z\in \,f^{-1}(B))}.
\end{align}
By taking the sum on disks and by integrating these two inequalities, we obtain
\begin{equation*}
\label{-compare-counting-function-case-b31-h1,h2,h3-A}
N_f^{[3]}(r,H_1)
+
N_f^{[3]}(r,H_2)
+
N_f^{[3]}(r,H_3)
\,
\leq
\, 
5
\,
N_f(r,A),
\end{equation*}
\begin{equation*}
\label{-compare-counting-function-case-b31-h4,h5,h6-B}
N_f^{[3]}(r,H_4)
+
N_f^{[3]}(r,H_5)
+
N_f^{[3]}(r,H_6)
\,
\leq 
\,
5
\,
N_f(r,B).
\end{equation*}
Letting $\mathcal{B}$ be a plane passing through $A$ and $B$, we proceed similarly as before
\begin{align}\label{-estimate-order-counting-case-b31}
5
\,
T_f(r)
&
\,
\leq
\,
\sum_{i=1}^{9}
N_f^{[3]}(r,H_i)
+
S_f(r)\notag\\
&
\,
\leq
\,
5
\,
N_f(r,A)
+
5
\,
N_f(r,B)
+
S_f(r)\notag\\
&
\,
\leq
\,
5
\,
N_f(r,\mathcal{B})
+
S_f(r)\notag\\
&
\,
\leq
\,
5
\,
T_f(r)
+
S_f(r).
\end{align}
Here, $S_f(r)=o(T_f(r))$ is negligible, hence all inequalities are equalities modulo $S_f(r)$. This gives \eqref{-equation-counting-A,B-order-case-b31}, as wanted.

Next, we consider case \textbf{(b3.2)}. Let us set
\begin{align*}
E_{t,A}
&
\,
=
\,
\{z\in\mathbb{C}\colon|z|<t,f(z)=A\},\\
E_{t,A,i}^1
&
\,
=
\,
\{z\in\mathbb{C}\colon|z|<t,f(z)=A,\ord_z (h_i\circ f)=1\}\quad\quad {\scriptstyle(1\,\leq\,i\,\leq\,3)},\\
E_{t,A,i}^{\geq 2}
&
\,
=
\,
\{z\in\mathbb{C}\colon|z|<t,f(z)=A,\ord_z (h_i\circ f)\geq2\}\quad\quad {\scriptstyle(1\,\leq\,i\,\leq\,3)},\\
E_{t,B}
&
\,
=
\,
\{z\in\mathbb{C}\colon|z|<t,f(z)=B\},\\
E_{t,B,i}^1
&
\,
=
\,
\{z\in\mathbb{C}\colon|z|<t,f(z)=B,\ord_z (h_i\circ f)=1\}\quad\quad {\scriptstyle(4\,\leq\,i\,\leq\,6)},\\
E_{t,B,i}^{\geq 2}
&
\,
=
\,
\{z\in\mathbb{C}\colon|z|<t,f(z)=B,\ord_z (h_i\circ f)\geq2\}\quad\quad {\scriptstyle(4\,\leq\,i\,\leq\,6)}.
\end{align*}
Assume that $\ord_z(t_{1,2}\circ f)\geq 2$ for all $z\in f^{-1}(A)$. Since $t_{1,2}$, $r_{1,2}$ are linear combinations of $h_1$ and $h_2$ with nonzero coefficients, we have
\begin{align}
E_{t,A,1}^{\geq 2}
&
\,
=
\,
E_{t,A,2}^{\geq 2},\notag\\
\label{-case3-ord-r-1}
\ord_z(r_{1,2}\circ f)
&
\,
\geq
\,
2
\quad\quad\quad\quad {\scriptstyle(z\,\in\, E_{t,A,1}^{\geq 2})}.
\end{align}
For the same reason
\[
E_{t,A,1}^1
\,
=
\,
E_{t,A,2}^1,
\]
which yields
\begin{equation}
\label{-case3-ord-h1,h2,h3,min,3,leq,5}
\sum_{i=1}^3
\min
\,
\{\ord_z (h_i\circ f),3\}
\,
\leq
\,
5
\quad\quad {\scriptstyle(z\,\in\, E_{t,A,1}^1)}.
\end{equation}
Letting $\mathcal{R}=\{\mathsf{r}=r_{1,2}\,r_{4,5}\,r_{5,6}\,r_{4,6}=0\}$ be the degenerate quartic $R_{1,2}\cup R_{4,5}\cup R_{5,6}\cup R_{4,6}$ whose four components pass through $A$ and $B$, we have
\begin{equation}
\label{-ord-A,B-always-greater-than-4}
\ord_z(\mathsf{r}\circ f)
\,
\geq
\,
4\quad\quad {\scriptstyle(z\in E_{t,A}\cup E_{t,B})}.
\end{equation}
Furthermore, it follows from~\eqref{-case3-ord-r-1} that
\[
\ord_z(\mathsf{r}\circ f)
\,
\geq
\,
5
\quad\quad {\scriptstyle(z\,\in\, E_{t,A,1}^{\geq 2})}.
\]
Using this inequality together with \eqref{-first-estimate-ord_z-h_i} and \eqref{-order-equal-1}, we get
\begin{equation*}
\sum_{i=1}^3
\min
\,
\{\ord_z (h_i\circ f),3\}
\,
\leq
\,
6
\,
=
\,
6
\,
\min_{1\leq i\leq 3}
\,
\ord_z(h_i\circ f)
\,
\leq
\,
\dfrac{6}{5}
\ord_z(\mathsf{r}\circ f)
\quad\quad {\scriptstyle(z\,\in\, E_{t,A,1}^{\geq 2})}.
\end{equation*}
Combining \eqref{-case3-ord-h1,h2,h3,min,3,leq,5}, \eqref{-order-equal-1} and \eqref{-ord-A,B-always-greater-than-4}, we receive
\begin{equation*}
\sum_{i=1}^3
\min
\,
\{\ord_z (h_i\circ f),3\}
\,
\leq
\,
5
\,
=
\,
5\min_{1\leq i\leq 3}
\,
\ord_z (h_i\circ f)
\,
\leq
\,
\dfrac{5}{4}\ord_z(\mathsf{r}\circ f)
\quad\quad {\scriptstyle(z\,\in\, E_{t,A,1}^1)}.
\end{equation*}
Since $\ord_z(t_{j_1,j_2}\circ f)=1$ for all $4\leq j_1<j_2\leq 6$, for all $z\in f^{-1}(B)$, by similar arguments as in \eqref{-ord-totall<=5} and by using \eqref{-ord-A,B-always-greater-than-4}, we also have
\begin{equation*}
\sum_{i=4}^6
\min
\,
\{\ord_z (h_i\circ f),3\}
\,
\leq
\,
5
\,
=
\,
5
\,
\min_{4\leq i\leq 6}
\,
\ord_z(h_i\circ f)
\,
\leq
\,
\dfrac{5}{4}
\ord_z(\mathsf{r}\circ f)
\quad\quad {\scriptstyle(z\in \,E_{t,B})}.
\end{equation*}
By taking the sum on disks and by integrating these three inequalities, we obtain
\begin{align}
\label{-estimate-ord-h1,h2,h3-r1,r4,r5,r6-Et,a,i}
\sum_{i=1}^3
\int_1^r
\dfrac{\sum_{z\in E_{t,A,1}^{\geq2}}
\min
\,
\{\ord_z (h_i\circ f),3\}}{t}\,\der t
&
\,
\leq
\,
6
\,
\int_1^r
\dfrac{\sum_{z\in E_{t,A,1}^{\geq2}}
\min_{1\leq i\leq 3}
\,
\ord_z (h_i\circ f)}{t}\,\der t\notag\\
&
\,
\leq
\,
\dfrac{6}{5}
\int_1^r
\dfrac{\sum_{z\in E_{t,A,1}^{\geq 2}}\ord_z (\mathsf{r}\circ f)}{t}\,\der t,\\
\label{-estimate-ord-h1,h2,h3-r1,r4,r5,r6-E't,a,i}
\sum_{i=1}^3
\int_1^r
\dfrac{\sum_{z\in E_{t,A,1}^1}
\min
\,
\{\ord_z (h_i\circ f),3\}}{t}\,\der t
&
\,
\leq
\,
5
\int_1^r
\dfrac{\sum_{z\in E_{t,A,1}^1}\min_{1\leq i\leq 3}
\,
\ord_z (h_i\circ f)}{t}\,\der t\notag\\
&
\,
\leq
\,
\dfrac{5}{4}
\int_1^r
\dfrac{\sum_{z\in E_{t,A,1}^1}\ord_z (\mathsf{r}\circ f)}{t}\,\der t,\\
\label{-estimate-ord-h4,h5,h6-r1,r4,r5,r6-E't,b,i}
\sum_{i=4}^6
\int_1^r
\dfrac{\sum_{z\in E_{t,B}}
\min
\,
\{\ord_z (h_i\circ f),3\}}{t}\,\der t
&
\,
\leq
\,
5\, N_f(r,B)\notag\\
&
\,
\leq
\,
\dfrac{5}{4}
\int_1^r
\dfrac{\sum_{z\in E_{t,B}}\ord_z (\mathsf{r}\circ f)}{t}\,\der t.
\end{align}
We then proceed similarly as before:
\begin{align}\label{-smt-tangent-1-line}
5
\,
T_f(r)
&
\,
\leq
\,
\sum_{i=1}^9
N_f^{[3]}(r,H_i)
+
S_f(r)\notag\\
&
\,
\leq
\,
\sum_{i=1}^3
N_f^{[3]}(r,H_i)
+
\sum_{i=4}^6
N_f^{[3]}(r,H_i)
+
S_f(r)\notag\\
&
\,
=
\,
\sum_{i=1}^3
\int_1^r
\dfrac{\sum_{z\in E_{t,A}}
\min
\,
\{\ord_z (h_i\circ f),3\}}{t}\,\der t
+
\sum_{i=4}^6
\int_1^r
\dfrac{\sum_{z\in E_{t,B}}
\min
\,
\{\ord_z (h_i\circ f),3\}}{t}\,\der t
+S_f(r)\notag\\
&
\,
=
\,
\sum_{i=1}^3
\bigg(
\int_1^r
\dfrac{\sum_{z\in E_{t,A,1}^{\geq2}}
\min
\,
\{\ord_z (h_i\circ f),3\}}{t}\,\der t
+
\int_1^r
\dfrac{\sum_{z\in E_{t,A,1}^1}
\min
\,
\{\ord_z (h_i\circ f),3\}}{t}\,\der t
\bigg)
\notag\\
&
\,
+
\,
\sum_{i=4}^6
\int_1^r
\dfrac{\sum_{z\in E_{t,B}}
\min
\,
\{\ord_z (h_i\circ f),3\}}{t}\,\der t
+
S_f(r)\notag\\
&
\,
\leq
\,
\dfrac{6}{5}
\int_1^r
\dfrac{\sum_{z\in E_{t,A,1}^{\geq2}}\ord_z (\mathsf{r}\circ f)}{t}\,\der t
+\dfrac{5}{4}
\int_1^r
\dfrac{\sum_{z\in E_{t,A,1}^1}\ord_z (\mathsf{r}\circ f)}{t}\,\der t\notag\\
&
\,
+
\,
\dfrac{5}{4}\int_1^r\dfrac{\sum_{z\in E_{t,B}}\ord_z (\mathsf{r}\circ f)}{t}\,\der t
+
S_f(r)
\notag\\
&
\,
=
\,
\dfrac{5}{4}
\,
\bigg(
\int_1^r
\dfrac{\sum_{z\in E_{t,A}}\ord_z (\mathsf{r}\circ f)}{t}\,\der t
+
\int_1^r
\dfrac{\sum_{z\in E_{t,B}}\ord_z (\mathsf{r}\circ f)}{t}\,\der t
\bigg)
\notag\\
&
\,
+
\,
\bigg(
\dfrac{6}{5}
-
\dfrac{5}{4}
\bigg)
\int_1^r
\dfrac{\sum_{z\in E_{t,A,1}^{\geq2}}\ord_z (\mathsf{r}\circ f)}{t}\,\der t
+
S_f(r)\notag\\
&
\,
\leq
\,
\dfrac{5}{4}
\,
N_f(r,\mathcal{R})
-
\dfrac{1}{20}
\int_1^r
\dfrac{\sum_{z\in E_{t,A,1}^{\geq2}}\ord_z (\mathsf{r}\circ f)}{t}\,\der t
+
S_f(r)\notag\\
&
\,
\leq
\,
5
\,
T_f(r)
-
\dfrac{1}{20}
\int_1^r
\dfrac{\sum_{z\in E_{t,A,1}^{\geq2}}\ord_z(\mathsf{r}\circ f)}{t}\,\der t
+
S_f(r).
\end{align}
This implies
\begin{equation}
\label{-small-equation-b32}
\int_1^r
\dfrac{\sum_{z\in E_{t,A,1}^{\geq2}}\ord_z (\mathsf{r}\circ f)}{t}\,\der t
\,
=
\,
S_f(r)
\end{equation}
and whence all inequalities in \eqref{-smt-tangent-1-line} become equalities modulo $S_f(r)$, which gives
\begin{align}
\label{-first-equation-b32}
\sum_{i=1}^6
N_f^{[3]}(r,H_i)
&
\,
=
\,
5
\,
T_f(r)
+
S_f(r),\\
\label{-second-equation-b32}
\sum_{i=1}^3
N_f^{[3]}(r,H_i)
&
\,
=
\,
\dfrac{5}{4}
\int_1^r
\dfrac{\sum_{z\in E_{t,A}}\ord_z (\mathsf{r}\circ f)}{t}\,\der t
+
S_f(r),\\
\label{-third-equation-b32}
\sum_{i=4}^6
N_f^{[3]}(r,H_i)
&
\,
=
\,
\dfrac{5}{4}
\int_1^r
\dfrac{\sum_{z\in E_{t,B}}\ord_z (\mathsf{r}\circ f)}{t}\,\der t
+
S_f(r).
\end{align}
It follows from \eqref{-estimate-ord-h4,h5,h6-r1,r4,r5,r6-E't,b,i} and \eqref{-third-equation-b32} that
\begin{equation}
\label{-equation-h4h5h6,B}
\sum_{i=4}^6
N_f^{[3]}(r,H_i)
\,
=
\,
5
\,
N_f(r,B)
+
S_f(r).
\end{equation}
Owing to \eqref{-small-equation-b32}, the two inequalities \eqref{-estimate-ord-h1,h2,h3-r1,r4,r5,r6-Et,a,i} become
\[
\sum_{i=1}^3
\int_1^r
\dfrac{\sum_{z\in E_{t,A,1}^{\geq2}}
\min
\,
\{\ord_z (h_i\circ f),3\}}{t}\,\der t
\,
=
\,
S_f(r)
\]
\[
\int_1^r
\dfrac{\sum_{z\in E_{t,A,1}^{\geq2}}
\min_{1\leq i\leq 3}
\,
\ord_z (h_i\circ f)}{t}\,\der t
\,
=
\,
S_f(r).
\]
Hence
\begin{equation}
\label{-equation-h1h2h3-again}
\sum_{i=1}^3
N_f^{[3]}(r,H_i)
\,
=
\,
\sum_{i=1}^3
\int_1^r
\dfrac{\sum_{z\in E_{t,A,1}^1}
\min\,
\{\ord_z (h_i\circ f),3\}}{t}\,\der t
+
S_f(r),
\end{equation}
\begin{equation}
\label{-equation-A-EtAi-again}
N_f(r,A)
\,
=
\,
\int_1^r
\dfrac{\sum_{z\in E_{t,A,1}^1}
\min_{1\leq i\leq 3}\,
\ord_z (h_i\circ f)}{t}\,\der t
+
S_f(r).
\end{equation}
Combining~\eqref{-estimate-ord-h1,h2,h3-r1,r4,r5,r6-E't,a,i}, \eqref{-equation-h1h2h3-again}, \eqref{-equation-A-EtAi-again}, we get
\begin{equation}
\label{-equation-h1h2h3,A}
\sum_{i=1}^3
N_f^{[3]}(r,H_i)
\,
=
\,
5
\,
N_f(r,A)
+
S_f(r).
\end{equation}
The equality \eqref{-equation-counting-A,B-order-case-b31} follows from \eqref{-first-equation-b32}, \eqref{-equation-h4h5h6,B}, \eqref{-equation-h1h2h3,A}.

\smallskip 

Case \textbf{(b3.3)} can be treated by similar arguments as for case \textbf{(b3.2)}.

\smallskip

Next, we consider case \textbf{(b3.4)}. Assume that
\[
\ord_z(t_{1,2}\circ f)
\,
\geq
\,
2 \quad\quad{\scriptstyle(z\,\in\,f^{-1}(A))},
\]
\[
\ord_z(t_{4,5}\circ f)
\,
\geq
\,
2 \quad\quad{\scriptstyle(z\,\in\,f^{-1}(B))}.
\]
By similar argument as in \eqref{-case3-ord-r-1}, we have $E_{t,A,1}^{\geq2}
=E_{t,A,2}^{\geq2}$, $E_{t,B,4}^{\geq2}=E_{t,B,5}^{\geq2}$, $E_{t,A,1}^1=E_{t,A,2}^1$, $E_{t,B,4}^1=E_{t,B,5}^1$, which implies
\begin{equation}
\label{-ord_r12_geq-2}
\ord_z(r_{1,2}\circ f)
\,
\geq
\,
2
\quad\quad {\scriptstyle(z\,\in\, E_{t,A,1}^{\geq2})},
\end{equation}
\begin{equation}
\label{-ord_r45_geq-2}
\ord_z(r_{4,5}\circ f)
\,
\geq
\,
2
\quad\quad {\scriptstyle(z\,\in\, E_{t,B,4}^{\geq2})},
\end{equation}
\[
\sum_{i=1}^3
\min
\,
\{\ord_z (h_i\circ f),3\}
\,
\leq
\,
5\quad\quad {\scriptstyle(z\,\in\, E_{t,A,1}^1)},
\]
\[
\sum_{i=4}^6
\min
\,
\{\ord_z (h_i\circ f),3\}
\,
\leq
\,
5\quad\quad {\scriptstyle(z\,\in\, E_{t,B,4}^1)}.
\]
Letting $\mathcal{S}=\{\mathsf{s}=r_{12}\,r_{4,5}=0\}$ be the degenerate quadric $R_{1,2}\cup R_{4,5}$, we see that
\[
\ord_z(\mathsf{s}\circ f)
\,
=
\,
\ord_z (r_{1,2}\circ f)
+
\ord_z (r_{4,5}\circ f)
\,
\geq
\,
2
\quad\quad {\scriptstyle(z\,\in\,E_{t,A}\,\cup\,E_{t,B})}.
\]
Furthermore, by using \eqref{-ord_r12_geq-2} and \eqref{-ord_r45_geq-2}, we have
\[
\ord_z(\mathsf{s}\circ f)
\,
=
\,
\ord_z (r_{1,2}\circ f)
+
\ord_z (r_{4,5}\circ f)
\,
\geq
\,
3
\quad\quad {\scriptstyle(z\,\in\, E_{t,A,1}^{\geq 2}\,\cup\,E_{t,B,4}^{\geq 2})}.
\]
Similarly as in the previous case, by using these inequalities together with \eqref{-order-equal-1} and \eqref{-first-estimate-ord_z-h_i}, we receive
\begin{equation*}
\sum_{i=1}^3
\min
\,
\{\ord_z (h_i\circ f),3\}
\,
\leq
\,
6
\,
=
\,
6
\,
\min_{1\leq i\leq 3}
\,
\ord_z(h_i\circ f)
\,
\leq
\,
\dfrac{6}{3}
\ord_z(\mathsf{s}\circ f)
\quad\quad {\scriptstyle(z\,\in\, E_{t,A,1}^{\geq 2})},
\end{equation*}
\begin{equation*}
\sum_{i=1}^3
\min
\,
\{\ord_z (h_i\circ f),3\}
\,
\leq
\,
5
\,
=
\,
5
\,
\min_{1\leq i\leq 3}
\,
\ord_z(h_i\circ f)
\,
\leq
\,
\dfrac{5}{2}
\ord_z(\mathsf{s}\circ f)
\quad\quad {\scriptstyle(z\,\in\, E_{t,A,1}^1)},
\end{equation*}
\begin{equation*}
\sum_{i=4}^6
\min
\,
\{\ord_z (h_i\circ f),3\}
\,
\leq
\,
6
\,
=
\,
6
\,
\min_{4\leq i\leq 6}
\,
\ord_z(h_i\circ f)
\,
\leq
\,
\dfrac{6}{3}
\ord_z(\mathsf{s}\circ f)
\quad\quad {\scriptstyle(z\,\in\, E_{t,B,4}^{\geq 2})},
\end{equation*}
\begin{equation*}
\sum_{i=4}^6
\min
\,
\{\ord_z (h_i\circ f),3\}
\,
\leq
\,
5
\,
=
\,
5
\,
\min_{4\leq i\leq 6}
\,
\ord_z(h_i\circ f)
\,
\leq
\,
\dfrac{5}{2}
\ord_z(\mathsf{s}\circ f)
\quad\quad {\scriptstyle(z\,\in\, E_{t,B,4}^1)}.
\end{equation*}
By taking the sum on disks and by integrating these four inequalities, we obtain
\begin{align*}
\label{-estimate-ord-h1,h2,h3-r1,r4-Et,a,i}
\sum_{i=1}^3
\int_1^r
\dfrac{\sum_{z\in E_{t,A,1}^{\geq 2}}
\min
\,
\{\ord_z (h_i\circ f),3\}}{t}\,\der t
&
\,
\leq
\,
6
\int_1^r
\dfrac{\sum_{z\in E_{t,A,1}^{\geq 2}}
\min_{1\leq i\leq 3}
\,
\ord_z (h_i\circ f)}{t}\,\der t\\
&
\,
\leq
\,
2
\int_1^r
\dfrac{\sum_{z\in E_{t,A,1}^{\geq 2}}
\ord_z (\mathsf{s}\circ f)}{t}\,\der t,
\end{align*}
\begin{align*}
\sum_{i=1}^3
\int_1^r
\dfrac{\sum_{z\in E_{t,A,1}^1}
\min
\,
\{\ord_z (h_i\circ f),3\}}{t}\,\der t
&
\,
\leq
\,
5
\int_1^r
\dfrac{\sum_{z\in E_{t,A,1}^1}
\min_{1\leq i\leq 3}
\,
\ord_z (h_i\circ f)}{t}\,\der t\\
&
\,
\leq
\,
\dfrac{5}{2}
\int_1^r
\dfrac{\sum_{z\in E_{t,A,1}^1}\ord_z (\mathsf{s}\circ f)}{t}\,\der t,
\end{align*}
\begin{align*}
\sum_{i=4}^6
\int_1^r
\dfrac{\sum_{z\in E_{t,B,4}^{\geq 2}}
\min
\,
\{\ord_z (h_i\circ f),3\}}{t}\,\der t
&
\,
\leq
\,
6
\int_1^r
\dfrac{\sum_{z\in E_{t,B,4}^{\geq 2}}
\min_{4\leq i\leq 6}
\,
\ord_z (h_i\circ f)}{t}\,\der t\\
&
\,
\leq
\,
2
\int_1^r
\dfrac{\sum_{z\in E_{t,B,4}^{\geq 2}}\ord_z (\mathsf{s}\circ f)}{t}\,\der t,
\end{align*}
\begin{align*}
\sum_{i=4}^6
\int_1^r
\dfrac{\sum_{z\in E_{t,B,4}^1}
\min
\,
\{\ord_z (h_i\circ f),3\}}{t}\,\der t
&
\,
\leq
\,
5
\int_1^r
\dfrac{\sum_{z\in E_{t,B,4}^1}
\min_{4\leq i\leq 6}
\,
\ord_z (h_i\circ f)}{t}\,\der t\\
&
\,
\leq
\,
\dfrac{5}{2}
\int_1^r
\dfrac{\sum_{z\in E_{t,B,4}^1}\ord_z (\mathsf{s}\circ f)}{t}\,\der t.
\end{align*}
Now, we proceed similarly as above
\begin{align*}
5
\,
T_f(r)
&
\,
\leq
\,
\sum_{i=1}^9
N_f^{[3]}(r,H_i)
+
S_f(r)
\\
&
\,
=
\,
\sum_{i=1}^3
\int_1^r
\dfrac{\sum_{z\in E_{t,A}}
\min
\,
\{\ord_z (h_i\circ f),3\}}{t}\,
\der t
+
\sum_{i=4}^6
\int_1^r
\dfrac{\sum_{z\in E_{t,B}}
\min
\,
\{\ord_z (h_i\circ f),3\}}{t}\,\der t
+
S_f(r)
\\
&
\,
=
\,
\sum_{i=1}^3
\bigg(
\int_1^r
\dfrac{\sum_{z\in E_{t,A,1}^1}
\min
\,
\{\ord_z (h_i\circ f),3\}}{t}\,\der t
+
\int_1^r
\dfrac{\sum_{z\in E_{t,A,1}^{\geq 2}}
\min
\,
\{\ord_z (h_i\circ f),3\}}{t}\,\der t
\bigg)\\
&
\,
+
\,
\sum_{i=4}^6
\bigg(
\int_1^r
\dfrac{\sum_{z\in E_{t,B,1}^1}
\min
\,
\{\ord_z (h_i\circ f),3\}}{t}\,\der t
+
\int_1^r
\dfrac{\sum_{z\in E_{t,B,4}^{\geq 2}}
\min
\,
\{\ord_z (h_i\circ f),3\}}{t}\,\der t
\bigg)
+
S_f(r)
\\
&
\,
\leq
\,
\dfrac{5}{2}
\,
\bigg(
\int_1^r
\dfrac{\sum_{z\in E_{t,A}}\ord_z (\mathsf{s}\circ f)}{t}\,\der t
+
\int_1^r
\dfrac{\sum_{z\in E_{t,B}}\ord_z (\mathsf{s}\circ f)}{t}\,\der t
\bigg)
\\
&
\,
+
\,
\bigg(
2-
\dfrac{5}{2}
\bigg)
\bigg(
\int_1^r
\dfrac{\sum_{z\in E_{t,A,1}^{\geq 2}}\ord_z (\mathsf{s}\circ f)}{t}\,\der t
+
\int_1^r
\dfrac{\sum_{z\in E_{t,B,4}^{\geq 2}}\ord_z (\mathsf{s}\circ f)}{t}\,\der t
\bigg)
+
S_f(r)
\\
&
\,
\leq
\,
\dfrac{5}{2}
\,
N_f(r,\mathcal{S})
-
\dfrac{1}{2}
\,
\bigg(
\int_1^r
\dfrac{\sum_{z\in E_{t,A,1}^{\geq 2}}\ord_z (\mathsf{s}\circ f)}{t}\,\der t
+
\int_1^r
\dfrac{\sum_{z\in E_{t,B,4}^{\geq 2}}\ord_z (\mathsf{s}\circ f)}{t}\,\der t
\bigg)
+
S_f(r)
\\
&
\,
\leq
\,
5
\,
T_f(r)
-
\dfrac{1}{2}
\,
\bigg(
\int_1^r
\dfrac{\sum_{z\in E_{t,A,1}^{\geq 2}}\ord_z (\mathsf{s}\circ f)}{t}\,\der t
+
\int_1^r
\dfrac{\sum_{z\in E_{t,B,4}^{\geq 2}}\ord_z (\mathsf{s}\circ f)}{t}\,\der t
\bigg)
+
S_f(r).
\end{align*}
This implies
\begin{align*}
\int_1^r
\dfrac{\sum_{z\in E_{t,A,1}^{\geq 2}}\ord_z (\mathsf{s}\circ f)}{t}\,\der t
&
\,
=
\,
S_f(r),\\
\int_1^r
\dfrac{\sum_{z\in E_{t,B,4}^{\geq 2}}\ord_z (\mathsf{s}\circ f)}{t}\,\der t
&
\,
=
\,
S_f(r),\\
\sum_{i=1}^6
N_f^{[3]}(r,H_i)
&
\,
=
\,
5
\,
T_f(r)
+
S_f(r),\\
\sum_{i=1}^3
N_f^{[3]}(r,H_i)
&
\,
=
\,
\dfrac{5}{2}
\int_1^r
\dfrac{\sum_{z\in E_{t,A}}\ord_z (\mathsf{r}\circ f)}{t}\,\der t
+
S_f(r),\\
\sum_{i=4}^6
N_f^{[3]}(r,H_i)
&
\,
=
\,
\dfrac{5}{2}
\int_1^r
\dfrac{\sum_{z\in E_{t,B}}\ord_z (\mathsf{r}\circ f)}{t}\,\der t
+
S_f(r).
\end{align*}
By proceeding similarly as in \eqref{-equation-h1h2h3,A}, we receive
\begin{align*}
\sum_{i=1}^3 N_f^{[3]}(r,H_i)
&
\,
=
\,
5
\,
N_f(r,A)
+
S_f(r),
\\
\sum_{i=4}^6 N_f^{[3]}(r,H_i)
&
\,
=
\,
5\,
N_f(r,B)
+
S_f(r).
\end{align*}
Hence, the equality \eqref{-equation-counting-A,B-order-case-b31} also holds in this case. Claim \ref{-claim} is thus proved.
\end{proof}
Next, since $f$ is of the form~\eqref{-form-of-f-case-2}, one can find a plane $\mathcal{K}=\{\mathsf{k}=0\}$ passing through $A$ and $C$ such that
\[
\ord_z(\mathsf{k}\circ f)
\,
\geq
\,
2\quad\quad{\scriptstyle(z\,\in \,f^{-1}(A))}.
\]
Let $\mathcal{B}_i=\{\mathsf{b}_i=0\}$ be the plane containing the two lines $AB$, $H_i\cap \mathcal{K}$ ($1\leq i\leq 3$). Since $\mathsf{b}_i$ is a linear combination of $h_i$ and $\mathsf{k}$ with nonzero coefficients, we have 
\[
\ord_z (\mathsf{b}_i\circ f)
\,
\geq
\,
2\quad\quad{\scriptstyle(z\,\in\, E_{t,A,i}^{\geq 2})},
\]
which yields
\begin{equation}
\label{-estimate-counting-h1,h2,h3-A-b1,b2,b3-E'tAi}
\sum_{i=1}^3
\ord_z(\mathsf{b}_i\circ f)
\geq
 4\quad\quad {\scriptstyle(z\,\in\,\cup_{i=1}^3\,E_{t,A,i}^{\geq 2})}.
\end{equation}
Let $\mathcal{C}=\{\mathsf{c}=\mathsf{b}_1\,\mathsf{b}_2\,\mathsf{b}_3=0\}$ be the degenerate cubic $\cup_{1\leq i\leq 3}\,\mathcal{B}_i$. It follows from~\eqref{-order-equal-1} and~\eqref{-estimate-counting-h1,h2,h3-A-b1,b2,b3-E'tAi} that
\begin{align*}
\min_{1\leq i\leq 3}
\,
\ord_z (h_i\circ f)
\,
&=
\,
1
\,
\leq
\,
\dfrac{1}{4}
\sum_{i=1}^3
\ord_z (\mathsf{b}_i\circ f)
\,
=
\,
\dfrac{1}{4}
\ord_z(\mathsf{c}\circ f)
\quad\quad {\scriptstyle(z\,\in\,\cup_{i=1}^3\,E_{t,A,i}^{\geq 2})},\\
\min_{1\leq i\leq 3}
\,
\ord_z(h_i\circ f)
\,
&=
\,
1
\,
\,
\leq
\,
\dfrac{1}{3}
\sum_{i=1}^3
\ord_z(\mathsf{b}_i\circ f)
\,
=
\,
\dfrac{1}{3}
\ord_z(\mathsf{c}\circ f)\quad\quad {\scriptstyle(z\,\in\,E_{t,A}\setminus\,\cup_{i=1}^3\,E_{t,A,i}^{\geq 2})},
\\
\min_{4\leq i\leq 6}
\,
\ord_z(h_i\circ f)
\,
&=
\,
1
\,
\,
\leq
\,
\dfrac{1}{3}
\sum_{i=1}^3
\ord_z(\mathsf{b}_i\circ f)
\,
=
\,
\dfrac{1}{3}
\ord_z(\mathsf{c}\circ f)\quad\quad {\scriptstyle(z\,\in\,E_{t,B})}.
\end{align*}
By taking the sum on disks and by integrating these inequalities,
\begin{align*}
\int_1^r
\dfrac{\sum_{z\in\cup_{i=1}^3
E_{t,A,i}^{\geq 2}}\min_{1\leq i\leq 3}
\,
\ord_z(h_i\circ f)}{t}\,\der t
&
\,
\leq
\,
\dfrac{1}{4}
\int_1^r
\dfrac{\sum_{z\in\cup_{i=1}^3 E_{t,A,i}^{\geq 2}}
\ord_z(\mathsf{c}\circ f)}{t}\,\der t,\\
\int_1^r
\dfrac{\sum_{z\in E_{t,A}\setminus\cup_{i=1}^3 E_{t,A,i}^{\geq 2}}
\min_{1\leq i\leq 3}
\,
\ord_z(h_i\circ f)}{t}\,\der t
&\,
\leq
\,
\dfrac{1}{3}
\int_1^r
\dfrac{\sum_{z\in E_{t,A}\setminus\cup_{i=1}^3 E_{t,A,i}^{\geq 2}}
\ord_z(\mathsf{c}\circ f)}{t}\,\der t,\\
N_f(r,B)
\,
=
\,
\int_1^r
\dfrac{\sum_{z\in E_{t,B}}
\min_{4\leq i\leq 6}
\,
\ord_z(h_i\circ f)}{t}\,\der t
&
\,
\leq
\,
\dfrac{1}{3}
\int_1^r
\dfrac{\sum_{z\in E_{t,B}}
\,
\ord_z(\mathsf{c}\circ f)}{t}\,\der t.
\end{align*}
By using these inequalities together with \eqref{-equation-counting-A,B-order-case-b31}, we receive
\begin{align*}
5
\,
T_f(r)
&
\,
=
\,
5
\,
N_f(r,A)
+
5
\,
N_f(r,B)
+
S_f(r)
\\
&
\,
=
\,
5
\,
\bigg(
\int_1^r
\dfrac{\sum_{z\in\cup_{i=1}^3 E_{t,A,i}^{\geq 2}}
\min_{1\leq i\leq 3}
\,
\ord_z(h_i\circ f)}{t}\,\der t
+
\int_1^r
\dfrac{\sum_{z\in E_{t,A}\setminus\cup_{i=1}^3 E_{t,A,i}^{\geq 2}}
\min_{1\leq i\leq 3}
\,
\ord_z(h_i\circ f)}{t}\,\der t
\bigg)
\\
&
\,
+
\,
5
\int_1^r
\dfrac{\sum_{z\in E_{t,B}}
\min_{4\leq i\leq 6}
\,
\ord_z(h_i\circ f)}{t}\,\der t
+
S_f(r)
\\
&
\,
\leq
\,
\dfrac{5}{3}
\,
\bigg(
\int_1^r
\dfrac{\sum_{z\in E_{t,A}}\ord_z(\mathsf{c}\circ f)}{t}\,\der t
+
\int_1^r
\dfrac{\sum_{z\in E_{t,B}}\ord_z (\mathsf{c}\circ f)}{t}\,\der t
\bigg)
\\
&
\,
+
\,
\bigg(
\dfrac{5}{4}
-
\dfrac{5}{3}
\bigg)
\int_1^r
\dfrac{\sum_{z\in\cup_{i=1}^3 E_{t,A,i}^{\geq 2}}
\ord_z(\mathsf{c}\circ f)}{t}\,\der t
+
S_f(r)
\\
&
\,
\leq
\,
\dfrac{5}{3}
\,
N_f(r,\mathcal{C})
-
\dfrac{5}{12}
\int_1^r
\dfrac{\sum_{z\in\cup_{i=1}^3 E_{t,A,i}^{\geq 2}}\ord_z(\mathsf{c}\circ f)}{t}\,\der t
+
S_f(r)
\\
&
\,
\leq
\,
5
\,
T_f(r)
-
\dfrac{5}{12}
\int_1^r
\dfrac{\sum_{z\in\cup_{i=1}^3 E_{t,A,i}^{\geq 2}}\ord_z(\mathsf{c}\circ f)}{t}\,\der t
+
S_f(r).
\end{align*}
This implies
\begin{equation}
\label{-tangent-not-often-A}
\int_1^r
\dfrac{\sum_{z\in\cup_{i=1}^3 E_{t,A,i}^{\geq 2}}
\ord_z(\mathsf{c}\circ f)}{t}\,\der t
\,
=
\,
S_f(r).
\end{equation}
By using \eqref{-first-estimate-ord_z-h_i} and \eqref{-estimate-counting-h1,h2,h3-A-b1,b2,b3-E'tAi}, we get
\[
\sum_{i=1}^3
\min
\,
\{\ord_z (h_{i}\circ f),3\}
\,
\leq
\,
6
\,
\leq
\,
\dfrac{3}{2}
\ord_z(\mathsf{c}\circ f)\quad\quad{\scriptstyle (z\,\in\cup_{i=1}^3 E_{t,A,i}^{\geq 2})},
\]
which yields 
\begin{align}
\label{-small-counting-1}
\int_1^r
\dfrac{\sum_{z\in\cup_{i=1}^3 E_{t,A,i}^{\geq 2}}
\min_{1\leq i\leq 3}
\,
\ord_z(h_i\circ f)}{t}\,\der t
&
\,
\leq
\,
\dfrac{3}{2}
\int_1^r
\dfrac{\sum_{z\in\cup_{i=1}^3 E_{t,A,i}^{\geq 2}}\ord_z(\mathsf{c}\circ f)}{t}\,\der t\notag\\
\explain{Use \eqref{-tangent-not-often-A}}
\ \ \ \ \ \ \ \ \ \ \ \ \ \ \ \ \ \ \ \ \ \ \ \ \ \ \ \ \ \ \ 
&
\,
=
\,
S_f(r).
\end{align}
Moreover, we also have
\[
\sum_{i=1}^3
\min
\,
\{\ord_z (h_{i}\circ f),3\}
\,
=
\,
3
\,
=
\,
3
\,
\min_{1\leq i\leq 3}
\,
\ord_z(h_i\circ f)\quad\quad{\scriptstyle (z\,\in\, E_{t,A}\,\setminus\,\cup_{i=1}^3 E_{t,A,i}^{\geq 2})},
\]
which implies, by integrating, that
\begin{equation}
\label{-counting-h1,h2,h3,counting-at-A,set-outside-E'tA,i}
\sum_{i=1}^3
\int_1^r
\dfrac{\sum_{z\in E_{t,A}\setminus\cup_{i=1}^3 E_{t,A,i}^{\geq 2}}
\min
\,
\{\ord_z (h_i\circ f),3\}}{t}\,\der t
\,
\leq
\,
3
\int_1^r
\dfrac{\sum_{z\in E_{t,A}\setminus\cup_{i=1}^3 E_{t,A,i}^{\geq 2}}\min_{1\leq i\leq 3}
\,
\ord_z(h_i\circ f)}{t}\,\der t.
\end{equation}
By combining \eqref{-small-counting-1} and \eqref{-counting-h1,h2,h3,counting-at-A,set-outside-E'tA,i}, we get
\begin{align*}
\sum_{i=1}^3
N_f^{[3]}(r,H_i)
&
\,
=
\,
\sum_{i=1}^3
\int_1^r
\dfrac{\sum_{z\in E_{t,A}}
\min
\,
\{\ord_z (h_i\circ f),3\}}{t}\,\der t\\
&
\,
=
\,
\sum_{i=1}^3
\bigg(
\int_1^r
\dfrac{\sum_{z\in E_{t,A}\setminus\cup_{i=1}^3 E_{t,A,i}^{\geq 2}}
\min
\,
\{\ord_z (h_i\circ f),3\}}{t}\,\der t\\
&
\,
+
\,
\int_1^r
\dfrac{\sum_{z\in\cup_{i=1}^3 E_{t,A,i}^{\geq 2}}
\min
\,
\{\ord_z (h_i\circ f),3\}}{t}\,\der t
\bigg)
+
S_f(r)
\\
&
\,
\leq
\,
3
\int_1^r
\dfrac{\sum_{z\in E_{t,A}\setminus\cup_{i=1}^3 E_{t,A,i}^{\geq 2}}
\min_{1\leq i\leq 3}
\,
\ord_z(h_i\circ f)}{t}\,\der t
+
S_f(r)\\
&
\,
\leq
\,
3
\,
N_f(r,A)
+
S_f(r).
\end{align*}
By symmetry, we also have
\begin{equation*}
\label{-estimate-counting-h4,h5,h6-tangent-often}
\sum_{i=4}^6
N_f^{[3]}(r,H_i)
\,
\leq
\,
3
\,
N_f(r,B)
+
S_f(r).
\end{equation*}
Hence we can rewrite \eqref{-estimate-order-counting-case-b31} to get a contradiction:
\begin{align*}
5
\,
T_f(r)
&
\,
\leq
\,
\sum_{i=1}^{9}
N_f^{[3]}(r,H_i)
+
S_f(r)\\
&
\,
\leq
\,
3
\,
N_f(r,A)
+
3\,N_f(r,B)
+
S_f(r)\\
&
\,
\leq
\,
3
\,N_f(r,\mathcal{B})
+
S_f(r)\\
&
\,
\leq
\,
3
\,
T_f(r)
+
S_f(r).
\end{align*}
\end{proof}

In $\mathbb{P}^4(\mathbb{C})$, by the generic condition for the family of hyperplanes $\{H_i\}_{1\leq i\leq q}$, when $q\geq 10$, we see that, for all three disjoint subsets $I$, $J$, $K$ of $\{1,\ldots,q\}$ with $|I|\geq 2$, $|J|\geq 2$, $|I|+|J|=6$, $|K|=4$, the diagonal hyperplane $H_{IJ}$ does not contain the point $\cap_{k\in K}H_k$.

\begin{lem}\label{start_lem_P4}
In $\mathbb{P}^4(\mathbb{C})$, all complements of the form \eqref{hyperbolic_complement} are hyperbolic if $m=1$.
\end{lem}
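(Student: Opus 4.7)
The plan is to follow the short strategy used in Lemma~\ref{start_lem_P3} for the case $m=1$, adapted to $\mathbb{P}^4(\mathbb{C})$. Without loss of generality we may assume $A_{1,4}=P_{k,I_k}^*$ for some $k\in\{0,1,2\}$, so $A_{1,4}$ is either a single point, a star-line, or a star-plane. Suppose for contradiction that there is an entire curve $f\colon\mathbb{C}\to\mathbb{P}^4(\mathbb{C})\setminus(\cup_{i=1}^{10}H_i\setminus A_{1,4})$. Since $f$ must completely avoid every hyperplane not containing $A_{1,4}$, it avoids at least $6+k$ hyperplanes in general position.

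By Theorem~\ref{diagonal_streng}, the image of $f$ will be contained in a linear subspace of dimension at most $\lfloor 4/(2+k)\rfloor$, which equals $1$ for $k\in\{1,2\}$ and $2$ for $k=0$. When $f(\mathbb{C})$ lies inside a line $\mathcal{L}$, I first dispose of the possibility $\mathcal{L}\subset H_i$: this forces $f(\mathbb{C})\subset A_{1,4}$, and a quick count gives a contradiction (for $k=0$ the set $A_{1,4}$ is a point, while for $k=1,2$ the image would have to avoid too many hyperplanes inside the $k$-dimensional star-subspace, which is ruled out by Picard or Fujimoto-Green). Hence $\mathcal{L}$ meets each $H_i$ in exactly one point, and a short case-analysis using the generic condition---specifically the stated property that a diagonal hyperplane $H_{IJ}$ does not contain the quadruple-intersection point $\bigcap_{i\in K}H_i$ for appropriate disjoint $I,J,K$---shows that at least three of these intersection points lie outside $A_{1,4}$. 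Picard's theorem then forces $f$ to be constant, contradicting non-constancy.

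The remaining case is $k=0$, where $A=H_1\cap H_2\cap H_3\cap H_4$ and the image of $f$ lies in some $2$-plane $\mathcal{P}$. If $A\notin\mathcal{P}$, then $f$ avoids every line $H_i\cap\mathcal{P}$, giving at least nine lines in general position in $\mathcal{P}$ to avoid, which contradicts Fujimoto-Green (Theorem~\ref{fujimoto-green}). So $A\in\mathcal{P}$, and the six lines $L_i=H_i\cap\mathcal{P}$ for $i=5,\ldots,10$, which cannot pass through $A$ by general position of five hyperplanes, are entirely avoided by $f$. If those six lines are in general position in $\mathcal{P}\cong\mathbb{P}^2$, Fujimoto-Green once again forces $f$ to be constant. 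Otherwise some three of them are concurrent, so by Theorem~\ref{-hyperplane-not-in-general-position} the image of $f$ is contained in a line of $\mathcal{P}$, which reduces back to the line analysis above. The main obstacle will be the combinatorial verification that the generic condition always produces at least three distinct forbidden points on every line $\mathcal{L}$ containing $f(\mathbb{C})$; tracing through the possible multi-fold intersections $\mathcal{L}$ might pass through, and applying the diagonal-hyperplane exclusion from the generic condition, should eliminate every bad configuration.
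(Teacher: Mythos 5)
Your overall framing (reduce to $A_{1,4}=P_{k,I_k}^*$, use Dufresnoy to confine the curve to a low-dimensional linear subspace, then count omitted points) matches the paper for the cases where $A_{1,4}$ is a star-line or a star-plane, and there your treatment is no less complete than the paper's own (which is equally terse about ``analyzing the position of $L$''). The divergence, and the real problem, is the case where $A_{1,4}$ is a point $A=\cap_{i=1}^4H_i$. The paper does not pass through a $2$-plane at all: it applies Theorem~\ref{diagonal} to the six avoided hyperplanes $H_5,\dots,H_{10}$, which are $n+2$ hyperplanes in general position in $\mathbb{P}^4(\mathbb{C})$, so $f(\mathbb{C})$ lies in a diagonal hyperplane $H_{JK}$ of that family; by the consequence of genericity recorded just before the lemma ($H_{IJ}$ with $|I|+|J|=6$ never contains $\cap_{k\in K}H_k$ with $|K|=4$), this diagonal hyperplane misses $A$, so $f$ omits all ten hyperplanes and is constant by Theorem~\ref{fujimoto-green}. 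That is a complete three-line argument.

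Your replacement route (Dufresnoy gives a $2$-plane $\mathcal{P}$, then either Fujimoto--Green in $\mathcal{P}$ or Theorem~\ref{-hyperplane-not-in-general-position} pushes $f$ into a line $\mathcal{L}$ through $A$) terminates in a claim you explicitly defer: that every such $\mathcal{L}$ meets $\cup_{i=5}^{10}H_i$ in at least three points distinct from $A$. This is not automatic. Since a point of $\mathbb{P}^4(\mathbb{C})$ in general position can lie on up to four of the $H_i$, the six intersection points $\mathcal{L}\cap H_i$ ($5\le i\le 10$) could a priori collapse to two locations, say a point of $H_5\cap H_6\cap H_7$ and a point of $H_8\cap H_9\cap H_{10}$, leaving only two omitted points on $\mathcal{L}$ besides possibly $A$ itself --- not enough for Picard. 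The stated generic condition constrains intersections of diagonal hyperplanes $H_{IJ_1},\dots,H_{IJ_k}$ only for \emph{pairwise disjoint} sets $J_1,\dots,J_k$, and the natural diagonal hyperplanes containing such an $\mathcal{L}$ (e.g.\ $H_{I\{5,6\}}$, $H_{I\{5,7\}}$, $H_{I\{6,7\}}$ with $I=\{1,2,3,4\}$) have overlapping $J$'s, so the exclusion you invoke does not visibly apply. You should either carry out this combinatorial verification in full or, better, replace the point case by the diagonal-hyperplane argument above, which sidesteps it entirely.
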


\begin{proof}
We can assume that $A_{1,4}$ is a set consisting of one element in
\[
\big(
\cup_{1\leq i_1<i_2\leq 10}\,
(
H_{i_1}
\cap 
H_{i_2}
)^*
\big)\,
\bigcup\,
\big(
\cup_{1\leq i_1<i_2<i_3\leq 10}\,
H_{i_1}
\cap 
H_{i_2}
\cap H_{i_3}
\big)^*
\bigcup\,
\big(
\cup_{1\leq i_1<i_2<i_3<i_4\leq 10}\,
H_{i_1}
\cap 
H_{i_2}
\cap H_{i_3}
\cap H_{i_4}
\big).
\]
Suppose to the contrary that there is an entire curve $f\colon\mathbb{C}\rightarrow\mathbb{P}^4(\mathbb{C})\setminus\big(\cup_{i=1}^{10}H_i\setminus A_{1,4}\big)$. If $A_{1,4}$ is not a set of a point, then $f$ avoids at least seven hyperplanes. By Theorem~\ref{diagonal_streng}, its image is contained in a line $L$ and we can continue to analyze the position of $L$ with respect to $\cup_{i=1}^{10}H_i\setminus A_{1,4}$ to get a contradiction. Consider the remaining case where $A_{1,4}$ consists of a point, say $\cap_{i=1}^4\,H_i$. By Theorem \ref{diagonal}, the curve $f$ lands in some diagonal hyperplane of the family $\{H_i\}_{5\leq i\leq 10}$, which does not contain the point $\cap_{i=1}^4 H_i$ by the generic condition. Hence, $f$ must avoid all $H_i$ ($1\leq i\leq 10$), which is impossible by Theorem~\ref{fujimoto-green}.
\end{proof}
\subsection{Stability of intersections}

We will also invoke the following known complex analysis fact.

\begin{namedthm*}{Stability of intersections}\label{stability}
 Let $X$ be a complex manifold and let $H \subset X$ be an analytic hypersurface. Suppose that a sequence $(f_n)$ of entire curves in $X$ converges toward an entire curve $f$. If $f(\mathbb{C})$ is not contained in $H$, then we have
\[
f(\mathbb{C})
\cap H
\subset
\lim f_n(\mathbb{C})\cap H.
\]
\end{namedthm*}

\section{Proof of the Main Theorem}

We keep the notation of the previous section. Let $S$ be a hypersurface of degree $2n$, which is in general position with respect to the family $\{H_i\}_{1\leq i\leq 2n}$. We would like to determine what conditions $S$ should satisfy for $\Sigma_{\epsilon}$ to be hyperbolic. Suppose that $\Sigma_{\epsilon_k}$ is not hyperbolic for a sequence $(\epsilon_k)$ converging to $0$. Then we can find entire curves $f_{\epsilon_k}\colon\mathbb{C}\rightarrow\Sigma_{\epsilon_k}$. By the Brody lemma, after reparameterization and extraction, we may assume that the sequence $(f_{\epsilon_k})$ converges to an entire curve $f\colon\mathbb{C}\rightarrow \cup_{i=1}^{2n}H_i$. The curve $f(\mathbb{C})$ lands inside some hyperplane $H_i$. Moreover, it cannot land inside any subspace of dimension 1 (a line). Indeed, if $f(\mathbb{C})\subset \cap_{i\in I} H_i$ for some subset $I$ of the index set $\mathbf{Q}=\{1,\ldots,2n\}$ having cardinality $n-1$, then for all $j\in \mathbf{Q}\setminus I$, by stability of intersections, one has
\[
f(\mathbb{C})\cap H_j
\subset
\lim f_{\epsilon_k}(\mathbb{C})\cap H_j\subset
\lim\Sigma_{\epsilon_k}\cap H_j\subset S\cap H_j.
\]
Thus $f(\mathbb{C})$ and $H_j$ have empty intersection by the general position. Hence the curve $f(\mathbb{C})$ lands in
\[
\cap_{i\in I}H_i\setminus\big(\cup_{j\in \mathbf{Q}\setminus I} H_j\big).
\]
This is a contradiction, because the complement of $n+1$ ($n\geq 3$) points in a line is hyperbolic by Picard's theorem.

Now, let $I$ be the largest subset of $\mathbf{Q}$ such that the curve $f(\mathbb{C})$ lands in $\cap_{i\in I}H_i$. We have $|I|\leq n-2$. By stability of intersections, $f(\mathbb{C})\cap H_j$ is contained in $S$ for all $l\in \mathbf{Q}\setminus I$. Therefore the curve $f(\mathbb{C})$ lands in
\begin{equation}\label{reduced_problem}
\cap_{i\in I}H_i\setminus\big(\cup_{j\in \mathbf{Q}\setminus I}H_j\setminus S\big).
\end{equation}
So, the problem reduces to finding a hypersurface $S$ of degree $2n$ such that all complements of the form \eqref{reduced_problem} are hyperbolic, where $I$ is a subset of $\mathbf{Q}$ of cardinality at most $n-2$. For example when $n=3$ (\cite{duval2014}), we need to find a sextic curve $S$ such that all complements of the form $H_i\setminus\big(\cup_{j\not=i}H_j\setminus S\big)$ are hyperbolic. In this case, we have the complement of five lines in the hyperplane $H_i$ on which all points of intersection with $S$ are deleted.

\begin{center}
\begin{picture}(0,0)%
\includegraphics{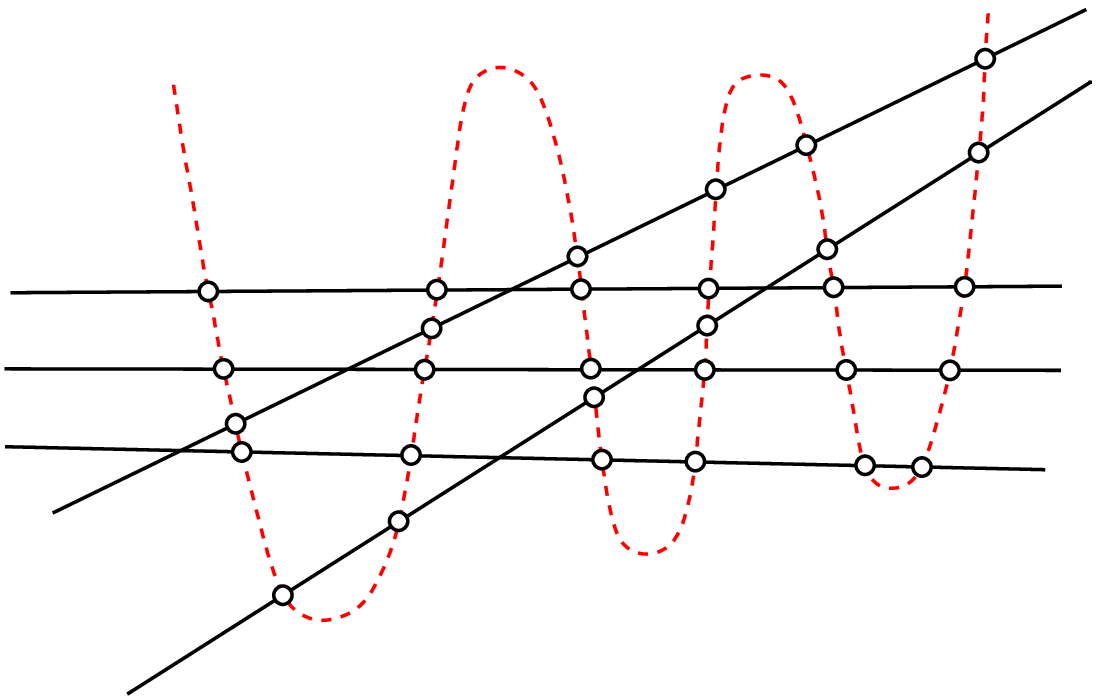}%
\end{picture}%
\setlength{\unitlength}{4144sp}%
\begingroup\makeatletter\ifx\SetFigFont\undefined%
\gdef\SetFigFont#1#2#3#4#5{%
  \reset@font\fontsize{#1}{#2pt}%
  \fontfamily{#3}\fontseries{#4}\fontshape{#5}%
  \selectfont}%
\fi\endgroup%
\begin{picture}(5007,3174)(637,-3077)
\end{picture}%
\end{center}

We will construct such $S$ by deformation, step by step. For $2\leq l\leq n-1$, let $\Delta_l$ be a finite collection of subspaces of dimension $n-l$, in the sense of section~\ref{-starting-lemmas}. Let $D_l\not \in \Delta_l$ be another subspace of dimension $n-l$, defined as $D_l=\cap_{i\in I_{D_l}}H_i$. For a hypersurface $S=\{s=0\}$ in general position with respect to the family $\{H_i\}_{1\leq i\leq 2n}$ and $\epsilon\not=0$, we set
\[
S_{\epsilon}
=
\big\{\epsilon s+\Pi_{i\not\in I_{D_l}}h_i^{n_i}=0\big\},
\]
where $n_i\geq 1$ are chosen (freely) so that $\sum_{i\not\in I_{D_l}}n_i=2n$. It is not hard to see that the hypersurface $S_{\epsilon}$ is also in general position with respect to the family $\{H_i\}_{1\leq i\leq 2n}$. We denote by $\overline{\Delta}_l$ the family of all subspaces of dimension $n-l$ ($2\leq l\leq n$) with the convention $\overline{\Delta}_n=\emptyset$.

\begin{lem}\label{lem_deformation} Assume that all complements of the form
\begin{equation}\label{start_form}
\cap_{i\in I}H_i\setminus\big(\cup_{j\in J}H_j\setminus(((\Delta_l\cup\overline{\Delta}_{l+1})\cap S)\cup A_{m,n-|I|})\big)
\end{equation}
are hyperbolic where $I,J$ are two disjoint subsets of $\{1,\dots, 2n\}$ such that $|I|\leq n-2$, $|J|+2|I|\geq 2n+1$ and $m\leq|J|+2|I|- (2n+1)$. Here, $A_{m,n-|I|}$ is a set of at most $m$ star-subspaces coming from the family of hyperplanes $\{\cap_{i\in I}H_i\cap H_j\}_{j\in J}$ in $\cap_{i\in I}H_i\cong\mathbb{P}^{n-|I|}(\mathbb{C})$. Then all complements of the form
\begin{equation}\label{end_form}
\cap_{i\in I}H_i\setminus\big(\cup_{j\in J}H_j\setminus(((\Delta_l\cup D_l\cup\overline{\Delta}_{l+1})\cap S_{\epsilon})\cup A_{m,n-|I|})\big)
\end{equation}
are also hyperbolic for sufficiently small $\epsilon\not=0$.
\end{lem}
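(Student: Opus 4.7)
I will argue by contradiction via a Brody--limit analysis. Suppose there exists a sequence $\epsilon_k\to 0$ such that the complement of the form~\eqref{end_form} associated to $S_{\epsilon_k}$ contains an entire curve $f_{\epsilon_k}$. Applying the Brody lemma in its ``sequences of entire curves'' form inside the compact variety $\cap_{i\in I}H_i$, I extract, after reparameterization, a subsequence converging to a nonconstant Brody curve $f\colon\mathbb{C}\to\cap_{i\in I}H_i$. Let $I'\supseteq I$ be the maximal subset for which $f(\mathbb{C})\subseteq\cap_{i\in I'}H_i$, and set $J''=J\setminus I'$; the Picard-type argument already used at the start of Section~4 to derive~\eqref{reduced_problem} ensures $|I'|\le n-2$.

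The central step is to identify the Hausdorff limit of the allowed sets $(\Delta_l\cup D_l\cup\overline{\Delta}_{l+1})\cap S_{\epsilon_k}$ as $\epsilon_k\to 0$. The specific choice $S_\epsilon=\{\epsilon s+\prod_{i\notin I_{D_l}}h_i^{n_i}=0\}$ is engineered to produce a sharp dichotomy. Any $P\in\Delta_l$ satisfies $P\ne D_l$ and $|I_P|=l=|I_{D_l}|$, so some $i\in I_P$ lies outside $I_{D_l}$; hence $\prod_{i\notin I_{D_l}}h_i^{n_i}$ vanishes identically on $P$ and $S_{\epsilon_k}\cap P=S\cap P$ for every $k$. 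In contrast, $D_l\not\subseteq H_i$ for any $i\notin I_{D_l}$, so $S_{\epsilon_k}\cap D_l$ Hausdorff-converges to $\bigcup_{i\notin I_{D_l}}(D_l\cap H_i)$, a union of $(n-l-1)$-dimensional subspaces belonging to $\overline{\Delta}_{l+1}$. The analogous dichotomy on each $Q\in\overline{\Delta}_{l+1}$ gives either $Q\cap S$ or further subspaces in $\overline{\Delta}_{l+2}$. Combined with Stability of Intersections, this yields, for each $j\in J''$,
\[
f(\mathbb{C})\cap H_j\ \subseteq\ \big((\Delta_l\cup\overline{\Delta}_{l+1})\cap S\big)\cup\Big(\bigcup_{i\notin I_{D_l}}D_l\cap H_i\Big)\cup A_{m,n-|I|}.
\]

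It then remains to absorb the extras $D_l\cap H_i$ into the framework of the old form~\eqref{start_form}. By the generic condition, and its preservation under passage to smaller-dimensional subspaces noted in the Introduction, each $D_l\cap H_i$ meets the hyperplanes $\{H_j\}_{j\in J''}$ only in a controlled, finite collection of star-subspaces of $\cap_{i\in I'}H_i$, which I would incorporate into an enlarged set $A_{m',n-|I'|}$. The arithmetic budget $m'\le|J''|+2|I'|-(2n+1)$ is respected because the passage from $(I,J)$ to $(I',J'')$ increases this quantity by $|I'\setminus I|\ge 0$, providing room for the finitely many added star-subspaces. Consequently $f$ lies in a complement of the old form~\eqref{start_form}, hyperbolic by hypothesis, which contradicts the nonconstancy of $f$ guaranteed by the Brody normalization. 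The hardest part will be exactly this final bookkeeping: enumerating precisely how many new star-subspaces the extras produce after intersecting with $\{H_j\}_{j\in J''}$, and verifying the arithmetic budget in every subcase, in particular in the degenerate scenario $I'\supsetneq I$ where $\Delta_l$, $D_l$ and $\overline{\Delta}_{l+1}$ must be reinterpreted as subspace collections inside $\cap_{i\in I'}H_i\cong\mathbb{P}^{n-|I'|}(\mathbb{C})$.
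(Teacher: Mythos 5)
Your overall strategy---Brody limit, stability of intersections, and the key observation that $S_{\epsilon}$ agrees with $S$ on every subspace $\cap_{m\in M}H_m$ with $M\cap(\mathbf{Q}\setminus I_{D_l})\neq\emptyset$ while $S_{\epsilon_k}\cap D_l$ degenerates onto $D_l\cap\big(\cup_{i\notin I_{D_l}}H_i\big)$---is exactly the paper's. (One small slip: there is no ``further dichotomy'' on $Q\in\overline{\Delta}_{l+1}$; since $|I_Q|=l+1>|I_{D_l}|$ one has $S_{\epsilon}\cap Q=S\cap Q$ outright. Also, for $j\notin I_{D_l}$ one gets the sharper statement $H_j\cap D_l\cap S_{\epsilon_k}\subset\overline{\Delta}_{l+1}\cap S$ exactly, with no extra terms at all; lumping all $j\in J''$ into one inclusion throws this away.) The genuine gap is in the absorption step, which you defer as ``bookkeeping'' but which is where the content of the lemma lies, and the mechanism you propose for it fails in two of the three configurations that actually occur.

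First, when $I'=I$ your budget increase $|I'\setminus I|$ is zero, so there is no room whatsoever to enlarge $A_{m,n-|I|}$: the hypothesis only covers at most $m\leq|J|+2|I|-(2n+1)$ star-subspaces and that quota may already be saturated. Here one must show the extras contribute \emph{nothing new}: a limit point of $f_{\epsilon_k}(\mathbb{C})\cap H_j$ lying on $D_l\cap H_i$ with $i\notin I_{D_l}$ lies in particular on $H_i$; since $\cap_{i\in I}H_i$ is the minimal subspace containing $f(\mathbb{C})$, one has $f(\mathbb{C})\not\subset H_i$, and running the stability argument again with $H_i$ in place of $H_j$ (using $S_{\epsilon_k}\cap D_l\cap H_i=S\cap D_l\cap H_i\subset\overline{\Delta}_{l+1}\cap S$) puts the point back inside $\big((\Delta_l\cup\overline{\Delta}_{l+1})\cap S\big)\cup A_{m,n-|I|}$. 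Your proposal has no substitute for this. Second, when $I'\supsetneq I$, all the extras are contained in the single subspace $Y=\cap_{i\in I'}H_i\cap D_l$, and the correct move is to add at most the \emph{one} star-subspace $Y^{*}$---not one per component $D_l\cap H_i$ or per pair $(i,j)$, which would immediately exceed the available increase $|I'\setminus I|$. Moreover, when $Y$ has codimension $1$ in $\cap_{i\in I'}H_i$ it is a hyperplane there and cannot be recorded as an allowed star-subspace (these must have dimension at most $n-|I'|-2$); the paper instead deletes the corresponding hyperplane $H_y$ from the excluded family and re-verifies the cardinality inequality. The same phenomenon hits the traces of the old $A_{m,n-|I|}$ on $\cap_{i\in I'}H_i$: those that become codimension $1$ force further deletions from the excluded family, and the inequality $|J''|\geq 2(n-|I'|)+1+m'$ must be checked after all these deletions (together with the separate verification that $|I'|\leq n-2$, which itself uses this count plus the general position of $S$). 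Without these case distinctions the reduction to the form~\eqref{start_form} does not go through.
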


\begin{proof}
By the definition of $S_{\epsilon}$, we see that $S_{\epsilon}\cap\big(\cap_{m\in M}H_m\big)= S\cap\big(\cap_{m\in M}H_m\big)$ when $M\cap(\mathbf{Q}\setminus I_{D_l})\not=\emptyset$, hence
\[
(\Delta_l\cup D_l\cup\overline{\Delta}_{l+1})\cap S_{\epsilon}=
((\Delta_l\cup \overline{\Delta}_{l+1})\cap S)\cup(D_l\cap S_{\epsilon}).
\]
When $|I|\geq l$, using this, we observe that two complements \eqref{start_form}, \eqref{end_form} coincide.

Assume therefore $|I|\leq l-1$. Suppose by contradiction that there exists a sequence of entire curves $(f_{\epsilon_k}(\mathbb{C}))_k$, $\epsilon_k\rightarrow 0$ contained in the complement
\[
\cap_{i\in I}H_i\setminus\big(\cup_{j\in J}H_j\setminus(((\Delta_l\cup D_l\cup\overline{\Delta}_{l+1})\cap S_{\epsilon_k})\cup A_{m,n-|I|})\big).
\]
By the Brody Lemma,  we may assume that $(f_{\epsilon_k})$ converges to an entire curve $f(\mathbb{C})\subset \cap_{i\in I}H_i$. Our aim is to prove that the curve $f(\mathbb{C})$ lands in some complement of the form \eqref{start_form}. Let $\cap_{k\in K}H_k$ be the smallest subspace containing $f(\mathbb{C})$. It is clear that $K\supset I$. Take an index $j$ in $J\setminus K$. By stability of intersections, one has
\begin{align}\label{intersection}
f(\mathbb{C})\cap H_j& \subset \lim f_{\epsilon_k}(\mathbb{C})\cap H_j\notag\\
&\subset ((\Delta_l\cup \overline{\Delta}_{l+1})\cap S)\cup A_{m,n-|I|}\cup\lim(D_l\cap S_{\epsilon_k}).
\end{align}
If the index $j$ does not belong to $I_{D_l}$, then $H_j\cap D_l\cap S_{\epsilon_k}\subset \overline{\Delta}_{l+1}\cap S$. It follows from \eqref{intersection} that
\begin{equation}\label{intersection1}
f(\mathbb{C})\cap H_j
\subset ((\Delta_l\cup \overline{\Delta}_{l+1})\cap S)\cup A_{m,n-|I|}.
\end{equation}
If the index $j$ belongs to $I_{D_l}$, noting that $\lim (D_l\cap S_{\epsilon_k})$ is contained in $D_l\cap (\cup_{i\not\in I_{D_l}} H_i)$, again from \eqref{intersection}, one has
\begin{equation}\label{intersection2}
f(\mathbb{C})\cap H_j
\subset
((\Delta_l\cup \overline{\Delta}_{l+1})\cap S)\cup A_{m,n-|I|}\cup (D_l\cap(\cup_{i\not\in I_{D_l}}H_i)).
\end{equation}

Assume first that $K=I$. We claim that \eqref{intersection1} also holds when the index $j\in J\setminus I$ belonging to $I_{D_l}$. Indeed, for the supplementary part in \eqref{intersection2}, we have
\[
f(\mathbb{C})\cap H_j\cap
\big(
D_l\cup_{i\not\in I_{D_l}}
H_i
\big)
\subset
\cup_{i\not\in I_{D_l}}
(f(\mathbb{C})\cap H_j\cap H_i),
\]
so that \eqref{intersection1} applies here to all $i\not\in I_{D_l}$. Hence, the curve $f(\mathbb{C})$ lands inside
\[
\cap_{i\in I}H_i
\setminus
\big(
\cup_{j\in J}H_j\setminus(((\Delta_l\cup\overline{\Delta}_{l+1})\cap S)\cup A_{m,n-|I|})\big),
\]
contradicting the hypothesis.\\

Assume now that $I$ is a proper subset of $K$. Let us set
\[
A_{m,n-|I|,K}
=
\{X\cap(\cap_{k\in K}H_k)|X\in A_{m,n-|I|}\}.
\]
This set consists of star-subspaces of $\cap_{k\in K}H_k\cong\mathbb{P}^{n-|K|}(\mathbb{C})$. Let $B_{m,K}$ be the subset of $A_{m,n-|I|,K}$ containing all star-subspaces of dimension $n-|K|-1$ (i.e., of codimension $1$ in $\cap_{k\in K}H_k$), and let $C_{m,K}$ be the remaining part. A star-subspace in $B_{m,K}$ is of the form $(\cap_{k\in K}H_k\cap H_j)^*$ for some index $j\in J\setminus K$. Then let $R$ denote the set of such indices $j$, so that
\[
|R|=|B_{m,K}|.
\]
We consider two cases separately, depending on the dimension of the subspace $Y=\cap_{k\in K}H_k\cap D_l$.
\begin{itemize}
\item[\textbf{Case 1.}] $Y$ is a subspace of dimension $n-|K|-1$. In this case, $Y$ is of the form $(\cap_{k\in K}H_k)\cap H_y$ for some index $y$ in $I_{D_l}$. It follows from \eqref{intersection}, \eqref{intersection1}, \eqref{intersection2} that the curve $f(\mathbb{C})$ lands inside the set
\[
\cap_{k\in K}H_k
\setminus\big(\cup_{j\in (J\setminus K)\setminus (R\cup\{y\})}H_j\setminus(((\Delta_l\cup\overline{\Delta}_{l+1})\cap S)\cup C_{m,K})\big).
\]
Now we need to show that this set is of the form \eqref{start_form}. First, we verify the corresponding required inequality between cardinalities
\begin{align*}
|(J\setminus K)\setminus (R\cup\{y\})|
&
\,
\geq
\,
|J\setminus K|-|B_{m,K}|-1\\
&
\,
\geq
\,
|J|-|J\cap K|-\big(|J|+2|I|-2n-1-|C_{m,K}|\big)-1\\
&
\,
=
\,
2\,
(n-|K|)+|C_{m,K}|+ 2|K\setminus I|-|J\cap K|\\
&
\,
\geq
\,
2
\,
(n-|K|)+1+|C_{m,K}|,
\end{align*}
where the last inequality holds because $I$ and $J$ are two disjoint sets and $I$ is a proper subset of $K$. Secondly, we verify that the set $K$ is of cardinality at most $n-2$. Indeed, if $|K|=n-1$, then since $S$ is in general position with respect to $\{H_i\}_{1\leq i\leq 2n}$, we see that
\[
\cap_{k\in K}H_k
\setminus\big(\cup_{j\in (J\setminus K)\setminus (R\cup\{y\})}H_j\setminus(((\Delta_l\cup\overline{\Delta}_{l+1})\cap S)\cup C_{m,K})\big)
=
\cap_{k\in K}H_k
\setminus\big(\cup_{j\in (J\setminus K)\setminus (R\cup\{y\})}H_j\setminus C_{m,K}\big).
\]
Owing to the inequality $|(J\setminus K)\setminus (R\cup\{y\})|\geq 3+|C_{m,K}|$, the curve $f$ lands in a complement of at least three points in a line. By Picard's theorem, $f$ is constant, which is a contradiction.
\item[\textbf{Case 2.}] $Y$ is a subspace of dimension at most $n-|K|-2$. In this case, the curve $f(\mathbb{C})$ lands inside
\[
\cap_{k\in K}H_k
\setminus
\big(
\cup_{j\in (J\setminus K)\setminus R}H_j
\setminus
(((\Delta_l\cup\overline{\Delta}_{l+1})\cap S)\cup C_{m,K}\cup Y^*)
\big).
\]
This set is of the form \eqref{start_form} since
\[
|\{j\in(J\setminus K)\setminus R\}|
\,
\geq
\,
2
\,
(n-|K|)+1+|C_{m,K}\cup Y^*|,
\]
which also implies $|K|\leq n-2$ by similar argument as in \textbf{Case 1}.
\end{itemize}
The lemma is thus proved.
\end{proof}

\begin{proof}[End of proof of the Main Theorem]
We now come back to the proof of the Main Theorem. Keep the notation as in Lemma~\ref{lem_deformation}. We claim that $\{\cap_{i\in I}H_i\cap H_j\}_{j\in J}$ is also a family of generic hyperplanes in the projective space $\cap_{i\in I}H_i\cong\mathbb{P}^{n-|I|}(\mathbb{C})$. Indeed, let $\mathcal{I}$, $\mathcal{J}$, $\mathcal{J}_1,\ldots,\mathcal{J}_k$ be disjoint subsets of $J$ such that $|\mathcal{I}|, |\mathcal{J}_i|\geq 2$, $|\mathcal{I}|+|\mathcal{J}_i|=(n-|I|)+2$, $1\leq i\leq k$ and let $\{i_1,\ldots,i_l\}$ be a subset of $\mathcal{I}$. Let us set $\mathbf{I}=I\cup\mathcal{I}$; then the intersection between the $|\mathcal{J}|$ hyperplanes $H_j,j\in \mathcal{J}$, the $k$ diagonal hyperplanes $H_{\mathbf{I}\mathcal{J}_1},\ldots,H_{\mathbf{I}\mathcal{J}_k}$, and the $|I|+l$ hyperplanes $H_i$ ($i\in I$), $H_{i_1},\ldots,H_{i_l}$ is a linear subspace of codimension $\min\{k+|I|+l,|\mathbf{I}|\}+|\mathcal{J}|$, with the convention that when $\min\{k+|I|+l,|\mathbf{I}|\}+|\mathcal{J}|>n$, this intersection is empty. Since
\[
\min\{k+|I|+l,|\mathbf{I}|\}+|\mathcal{J}|
=
\min\{k+l,|\mathcal{I}|\}+|I|+|\mathcal{J}|
\]
we deduce that in the projective space $\cap_{i\in I}H_i$, the intersection between the $|\mathcal{J}|$ hyperplanes $H_j,j\in \mathcal{J}$, the $k$ diagonal hyperplanes $H_{\mathcal{I}\mathcal{J}_1},\ldots,H_{\mathcal{I}\mathcal{J}_k}$, and the $l$ hyperplanes $H_{i_1},\ldots,H_{i_l}$ is a linear subspace of codimension $\min\{k+l,|\mathcal{I}|\}+|\mathcal{J}|$, with the convention that when $\min\{k+l,|\mathcal{I}|\}+|\mathcal{J}|>n-|I|$, this intersection is empty.

\textbf{Starting point of the process by deformation:} We start with the hyperbolicity of all complements of the forms
\[
\cap_{i\in I}H_i\setminus\big(\cup_{j\in J}H_j\setminus A_{m,n-|I|}\big),
\]
where $I$, $J$, $A_{m,n-|I|}$ are as in Lemma~\ref{lem_deformation}. More precisely,
\begin{itemize}
\item[$\bullet$] when $n=3$, we start with the hyperbolicity of all complements $H_i\setminus\big(\cup_{j\not=i}H_j\big)$, which follows from Theorem~\ref{fujimoto-green} in $\mathbb{P}^2(\mathbb{C})$;
\item[$\bullet$] when $n=4$, we start with the hyperbolicity of all complements
\begin{align*}
& H_i\setminus\big(\cup_{j\not=i}H_j\big),\\
&\cap_{i\in I}H_i\setminus\big(\cup_{j\in J}H_j\setminus A_{1,2}\big)\quad\quad {\scriptstyle(|I|\,=\,2\,,\,5+|A_{1,2}|\,\leq\,|J|\,\leq\,6)},
\end{align*}
which follows from Theorem~\ref{fujimoto-green} in $\mathbb{P}^3(\mathbb{C})$ and Lemma \ref{start_lem_P2} for $m=1$;
\item[$\bullet$] when $n=5$, we start with the hyperbolicity of all complements
\begin{align*}
& H_i\setminus\big(\cup_{j\not=i}H_j\big),\\
&\cap_{i\in I}H_i\setminus\big(\cup_{j\in J}H_j\setminus A_{1,3}\big)\quad\quad {\scriptstyle(|I|\,=\,2\,,\,7+|A_{1,3}|\,\leq\,|J|\,\leq\,8)},\\
&\cap_{i\in I}H_i\setminus\big(\cup_{j\in J}H_j\setminus A_{2,2}\big)\quad\quad {\scriptstyle(|I|\,=\,3\,,\,5+|A_{2,2}|\,\leq\,|J|\,\leq 7)},
\end{align*}
which follows from Theorem~\ref{fujimoto-green} in $\mathbb{P}^4(\mathbb{C})$, Lemma~\ref{start_lem_P3} for $m=1$, and Lemma~\ref{start_lem_P2} for $m=2$;
\item[$\bullet$] when $n=6$, we start with the hyperbolicity of all complements
\begin{align*}
& H_i\setminus\big(\cup_{j\not=i}H_j\big),\\
&\cap_{i\in I}H_i\setminus\big(\cup_{j\in J}H_j\setminus A_{1,4}\big)\quad\quad {\scriptstyle(|I|\,=\,2\,,\,9+|A_{1,4}|\,\leq\,|J|\,\leq\,10)},\\
&\cap_{i\in I}H_i\setminus\big(\cup_{j\in J}H_j\setminus A_{2,3}\big)\quad\quad {\scriptstyle(|I|\,=\,3\,,\,7+|A_{2,3}|\,\leq\,|J|\,\leq\,9)},\\
&\cap_{i\in I}H_i\setminus\big(\cup_{j\in J}H_j\setminus A_{3,2}\big)\quad\quad {\scriptstyle(|I|\,=\,4\,,\,5+|A_{3,2}|\,\leq\,|J|\,\leq\,8)},
\end{align*}
which follows from Theorem~\ref{fujimoto-green} in $\mathbb{P}^5(\mathbb{C})$, Lemma~\ref{start_lem_P4} for $m=1$, Lemma~\ref{start_lem_P3} for $m=2$, and Lemma~\ref{start_lem_P2} for $m=3$.
\end{itemize}

\textbf{Details of the process by deformation:} In the first step, we apply inductively Lemma \ref{lem_deformation} for $l=n-1$ and get at the end a hypersurface $S_1$ such that all complements of the forms
\begin{align*}
&\cap_{i\in I}H_i
\setminus
\big(
\cup_{j\in J}H_j\setminus(S_1\cup A_{m,n-|I|})
\big)\quad\quad\quad\quad\quad\quad\, {\scriptstyle(|I|\,=\,n-2)},\\
&\cap_{i\in I}H_i
\setminus
\big(
\cup_{j\in J}H_j\setminus((\overline{\Delta}_{n-1}\cap S_1)\cup A_{m,n-|I|})
\big)\quad\quad {\scriptstyle(|I|\,\leq\, n-3)}
\end{align*}
are hyperbolic. Considering this as the starting point of the second step, we apply inductively Lemma \ref{lem_deformation} for $l=n-2$. Continuing this process, we get at the end of the $(n-2)\text{th}$ step a hypersurface $S=S_{n-2}$ satisfying the required properties.
\end{proof}
\section{Some discussion}

Actually, our method works for a family of at least $2n$ generic hyperplanes in $\mathbb{P}^n(\mathbb{C})$. We hope that the Main Theorem is true for all $n\geq 3$. As we saw above, the problem reduces to proving the following conjecture.
\begin{namedthm*}{Conjecture}
All complements of the form \eqref{hyperbolic_complement} are hyperbolic.
\end{namedthm*}
We already know it to be true for $n=2$, since Lemma \ref{start_lem_P2} holds generally, without restriction on $m$.
\begin{lem}\label{conjecture_P2}
In $\mathbb{P}^2(\mathbb{C})$, all complements of the form \eqref{hyperbolic_complement} are hyperbolic
\end{lem}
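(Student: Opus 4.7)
The plan is to extend Lemma~\ref{start_lem_P2} to arbitrary $m$ by induction, with the case $m\leq 3$ supplied by that lemma itself. Assume the conclusion holds for all $m'<m$, and let $f\colon\mathbb{C}\rightarrow\mathbb{P}^2(\mathbb{C})\setminus(\cup_{i=1}^{5+m}H_i\setminus A_{m,2})$ be any entire curve. If $f$ missed some allowed point $A_k\in A_{m,2}$, then $f$ would factor through a complement of the same shape with $m-1$ allowed points and hence be constant by the inductive hypothesis; so $f(\mathbb{C})$ meets every point of $A_{m,2}=\{A_1,\ldots,A_m\}$. A standard Picard-plus-genericity argument (if $f$ were contained in a line $L$, then $L$ would meet $\cup_i H_i\setminus A_{m,2}$ in at least five points, forcing $f$ to be constant) rules out the linearly degenerate case, so $f$ is linearly nondegenerate.

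Cartan's Second Main Theorem then yields
\[
(m+2)\,T_f(r)\,\leq\,\sum_{i=1}^{5+m}N_f^{[2]}(r,H_i)+S_f(r),
\]
and the elementary inequality $\min\{a,2\}+\min\{b,2\}\leq 3\min\{a,b\}$ for integers $a,b\geq 1$, applied at each $z\in f^{-1}(A_k)$ exactly as in the $m=2,3$ cases of Lemma~\ref{start_lem_P2}, gives
\[
\sum_{i=1}^{5+m}N_f^{[2]}(r,H_i)\,\leq\,3\sum_{k=1}^m N_f(r,A_k).
\]
The core step is to bound $\sum_k N_f(r,A_k)$ by constructing an auxiliary plane curve $\mathcal{C}$ of degree $d$ with multiplicity $\geq a$ at every $A_k$; the First Main Theorem then gives $a\sum_k N_f(r,A_k)\leq N_f(r,\mathcal{C})\leq d\,T_f(r)$, and combined with Cartan this produces the contradiction $(m+2)T_f\leq(3d/a)T_f+S_f$ as soon as $d/a<(m+2)/3$. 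Imitating the $m=3$ case one first tries $\mathcal{C}=\cup_{1\leq k<\ell\leq m}L_{k\ell}$, the union of the $\binom{m}{2}$ lines joining pairs $A_k,A_\ell$: by the generic condition (no three of the $A_k$ are collinear) these lines are all distinct and $\mathcal{C}$ has multiplicity $m-1$ at each $A_k$, giving the ratio $d/a=m/2$, which works precisely when $m\leq 3$.

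For $m\geq 5$ a much smaller $\mathcal{C}$ suffices: already a conic through any five of the $A_k$ (which exists since five points in $\mathbb{P}^2$ always lie on a conic) achieves $d/a=2<(m+2)/3$, and for larger $m$ an auxiliary curve with the required ratio is furnished by the dimension count $\binom{d+2}{2}>m\binom{a+1}{2}$ in the linear system of plane curves of degree $d$ with imposed multiplicity $a$ at each $A_k$, exploiting the genericity of the points. The main obstacle is the borderline case $m=4$: for four generic points in $\mathbb{P}^2$ the optimal ratio among plane curves passing through them with a prescribed multiplicity is exactly $d/a=2=(m+2)/3$, so the strategy above yields only equality and not a strict contradiction. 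To close this gap one exploits the equality case of the previous inequalities: Eremenko's theorem forces the Brody curve $f$ (omitting at least seven lines) to have the explicit reduced form $[1:e^{\lambda_1 z+\mu_1}:e^{\lambda_2 z+\mu_2}]$ in suitable homogeneous coordinates, and a Borel-type rank analysis on the linear system in $e^{\lambda_1 z},e^{\lambda_2 z}$ obtained from the orders of vanishing of $h_i\circ f$ at the preimages $f^{-1}(A_k)$, carried out exactly in the spirit of cases~\textbf{(a4)} and~\textbf{(b3.4)} of Lemma~\ref{start_lem_P3}, forces two of the linear forms $h_i$ to be linearly dependent and contradicts the generic condition on $\{H_i\}_{1\leq i\leq 9}$.
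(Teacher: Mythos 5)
Your overall strategy coincides with the paper's for $m\geq 5$: reduce to a linearly nondegenerate curve, combine Cartan's Second Main Theorem with the elementary inequality $\min\{a,2\}+\min\{b,2\}\leq 3\min\{a,b\}$ at the points of $A_{m,2}$, and then beat the ratio $(m+2)/3$ by an auxiliary curve of degree $d$ with multiplicity $\geq a$ at every $A_k$ produced by the dimension count $\binom{d+2}{2}>m\binom{a+1}{2}$ (the paper takes $d=(m+2)M$, $a=3M+1$, and uses the strict inequality to choose the curve not containing $f(\mathbb{C})$ — note that your single conic only literally works for $m=5$, and there the conic is unique, so you lose the freedom needed to ensure it does not contain $f(\mathbb{C})$). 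You also correctly identify $m=4$ as the borderline case where this ratio degenerates to equality.

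However, your resolution of the $m=4$ case contains a genuine gap. You assert that $f$ omits at least seven lines and hence, by Eremenko's theorem and Theorem~\ref{omit-hyperplanes}, has the reduced form $[1:e^{\lambda_1 z+\mu_1}:e^{\lambda_2 z+\mu_2}]$. This is false: the four points of $A_{4,2}$ can involve up to eight distinct lines among the nine, so $f$ is only guaranteed to omit \emph{one} line entirely (and in $\mathbb{P}^2$ one would need to omit all three coordinate lines to get the pure exponential form). With no exponential normal form available, the proposed Borel-type rank analysis in $e^{\lambda_1 z},e^{\lambda_2 z}$ has nothing to act on. You also overlook the sub-case where three of the $A_k$ lie on a common line $H_i$ (allowed by the genericity hypothesis, since the three points then come from pairs sharing the line $H_i$). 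The paper closes $m=4$ quite differently: in the collinear sub-case it uses the degenerate quintic consisting of the three lines $A_{i_j}A_{i_4}$ together with $H_i$ counted twice, which has multiplicity $3$ at each point and degree $5$, so $3>\tfrac{3\cdot 5}{6}$; in the non-collinear sub-case it introduces the eight conics $E_{i_j}$ through all four points and tangent to $H_{i_j}$ at $A_i$, whose union $\mathcal{E}$ has degree $16$ and satisfies $\min\{\ord_z(h_{i_1}\circ f),2\}+\min\{\ord_z(h_{i_2}\circ f),2\}\leq\tfrac{1}{3}\ord_z(\mathsf{e}\circ f)$ at every $z\in f^{-1}(A_i)$, yielding $6\,T_f(r)\leq\tfrac{16}{3}\,T_f(r)+S_f(r)$. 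Some such additional geometric input is indispensable at $m=4$, and your sketch does not supply it.
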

\begin{proof}
Assume now $m\geq 4$ and $A_{m,2}=\{A_1,\ldots,A_m\}$, where $A_i=H_{i_1}\cap H_{i_2}$ ($1\leq i\leq m$). We denote by $I$ the index set $\{i_j\colon 1\leq i\leq m,1\leq j\leq 2\}$. Suppose to the contrary that there exists an entire curve $f\colon\mathbb{C}\rightarrow \mathbb{P}^2(\mathbb{C})\setminus (\cup_{i=1}^{5+m}H_i\setminus A_{m,2})$. By the generic condition, we can assume that $f$ is linearly nondegenerate. By similar arguments as in Lemma~\ref{start_lem_P2} (cf.~\eqref{-compare-the-counting-functions1}), we have
\begin{equation*}
\label{-estimate-counting-Hi,Ai}
\sum_{i\in I}
N_f^{[2]}(r,H_i)
\,
\leq
\,
3
\,
\sum_{i=1}^m
N_f(r,A_i).
\end{equation*}
Let $\mathcal{C}_m=\{\mathsf{c}_m=0\}$ be an algebraic curve in $\mathbb{P}^2(\mathbb{C})$ of degree $d$ passing through all points in $A_{m,2}$ with multiplicity at least $k$ which does not contain the curve $f(\mathbb{C})$.
Starting from the inequality
\[
\min_{1\leq j\leq 2}
\,
\ord_z(h_{i_j}\circ f)
\,
\leq
\,
\dfrac{1}{k}
\ord_z(\mathsf{c}_m\circ f)\quad\quad
{\scriptstyle(z\,\in\,f^{-1}(A_i))}
\]
and proceeding as in \eqref{-compare-the-counting-function-at-points-and-line-L}, we get
\begin{equation*}
\label{-estimate-couting-at-pointAi-couting-of-cm}
\sum_{i=1}^m
N_f(r,A_i)
\,
\leq
\,
\dfrac{1}{k}
\,
N_f(r,\mathcal{C}_m).
\end{equation*}
We may then proceed similarly as in \eqref{-last-estimate-conclude1}
\begin{align}
\label{-last-estimate1}
(m+2)
\,
T_f(r)
&
\,
\leq
\,
\sum_{i=1}^{5+m}
N_f^{[2]}(r,H_i)
+
S_f(r)
\notag\\
&
\,
\leq
\,
3
\sum_{i=1}^m
N_f(r,A_i)
+
S_f(r)\notag\\
&
\,
\leq
\,
\dfrac{3}{k}
\,
N_f(r,\mathcal{C}_m)
+
S_f(r)\notag\\
&
\,
\leq
\,
\dfrac{3d}{k}
\,
T_f(r)
+
S_f(r).
\end{align}
When $m\geq 5$, the following claim yields a concluding contradiction.
\begin{claim}If $m\geq 5$, we can find some curve $\mathcal{C}_m$ which does not contain $f(\mathbb{C})$ such that 
\begin{equation}\label{k,d relation 1}
k
\,
>
\,
\dfrac{3d}{m+2}.
\end{equation}
\end{claim}
Indeed, the degree of freedom for the choice of a curve of degree $d$ is
\[
\dfrac{(d+1)(d+2)}{2}-1.
\]
We want $\mathcal{C}_m$ to pass through all points in $A_{m,2}$ with multiplicity at least $k$. The number of equations (with the coefficients of $\mathcal{C}_m$ as unknowns) for this is not greater than
\begin{equation*}
\label{-condition}
m\,\dfrac{k(k+1)}{2}.
\end{equation*}
Thus, for the existence of $\mathcal{C}_m$, it is necessary that
\begin{equation}\label{k,d relation 2}
\dfrac{(d+1)(d+2)}{2}-1
\,
>\,
m
\,
\dfrac{k(k+1)}{2}.
\end{equation}
We try to find two natural numbers $k,d$ satisfying \eqref{k,d relation 1} and \eqref{k,d relation 2}. This can be done by choosing $d=(m+2)M$ and $k=3M+1$ for large enough $M$. Using the remaining freedom in the choice of $\mathcal{C}_m$, we can choose it not containing $f(\mathbb{C})$, which proves the claim.

Next, we consider the remaining case where $m=4$.
\begin{center}
\begin{picture}(0,0)%
\includegraphics{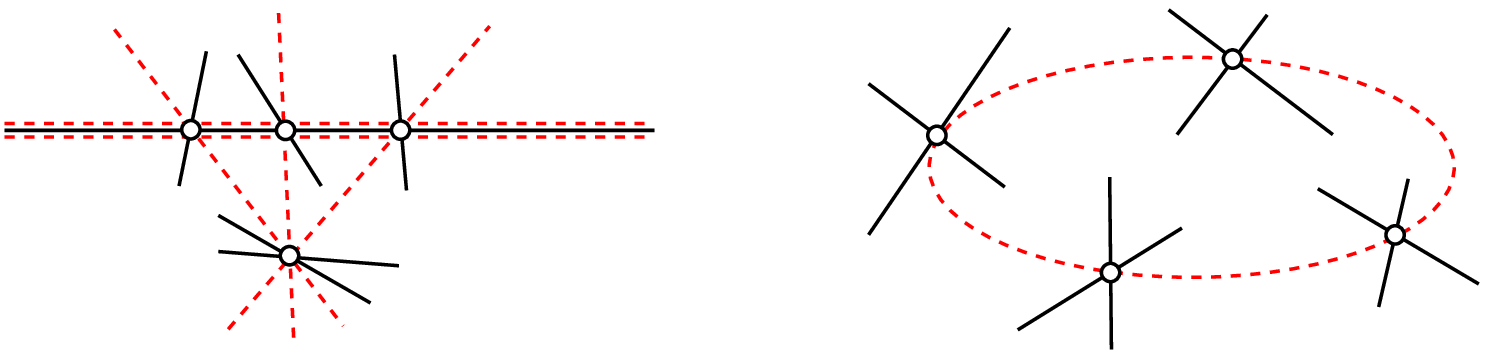}%
\end{picture}%
\setlength{\unitlength}{4144sp}%
\begingroup\makeatletter\ifx\SetFigFont\undefined%
\gdef\SetFigFont#1#2#3#4#5{%
  \reset@font\fontsize{#1}{#2pt}%
  \fontfamily{#3}\fontseries{#4}\fontshape{#5}%
  \selectfont}%
\fi\endgroup%
\begin{picture}(6783,1618)(261,-2089)
\put(6789,-1581){\makebox(0,0)[lb]{\smash{{\SetFigFont{12}{14.4}{\familydefault}{\mddefault}{\updefault}{\color[rgb]{0,0,0}$A_3$}%
}}}}
\put(6715,-915){\makebox(0,0)[lb]{\smash{{\SetFigFont{12}{14.4}{\familydefault}{\mddefault}{\updefault}{\color[rgb]{0,0,0}$E_{1_j}$}%
}}}}
\put(2913,-959){\makebox(0,0)[lb]{\smash{{\SetFigFont{12}{14.4}{\familydefault}{\mddefault}{\updefault}{\color[rgb]{0,0,0}$H_i$}%
}}}}
\put(1606,-953){\makebox(0,0)[lb]{\smash{{\SetFigFont{12}{14.4}{\familydefault}{\mddefault}{\updefault}{\color[rgb]{0,0,0}$A_{i_2}$}%
}}}}
\put(694,-944){\makebox(0,0)[lb]{\smash{{\SetFigFont{12}{14.4}{\familydefault}{\mddefault}{\updefault}{\color[rgb]{0,0,0}$A_{i_1}$}%
}}}}
\put(2164,-1271){\makebox(0,0)[lb]{\smash{{\SetFigFont{12}{14.4}{\familydefault}{\mddefault}{\updefault}{\color[rgb]{0,0,0}$A_{i_3}$}%
}}}}
\put(1168,-1798){\makebox(0,0)[lb]{\smash{{\SetFigFont{12}{14.4}{\familydefault}{\mddefault}{\updefault}{\color[rgb]{0,0,0}$A_{i_4}$}%
}}}}
\put(4240,-1139){\makebox(0,0)[lb]{\smash{{\SetFigFont{12}{14.4}{\familydefault}{\mddefault}{\updefault}{\color[rgb]{0,0,0}$A_1$}%
}}}}
\put(5390,-1902){\makebox(0,0)[lb]{\smash{{\SetFigFont{12}{14.4}{\familydefault}{\mddefault}{\updefault}{\color[rgb]{0,0,0}$A_2$}%
}}}}
\put(6054,-689){\makebox(0,0)[lb]{\smash{{\SetFigFont{12}{14.4}{\familydefault}{\mddefault}{\updefault}{\color[rgb]{0,0,0}$A_4$}%
}}}}
\put(4515,-630){\makebox(0,0)[lb]{\smash{{\SetFigFont{12}{14.4}{\familydefault}{\mddefault}{\updefault}{\color[rgb]{0,0,0}$H_{1_j}$}%
}}}}
\end{picture}%
\end{center}

If there exists a collinear subset $\{A_{i_1},A_{i_2},A_{i_3}\}$ of $A_{4,2}$, then by the generic condition, it must be contained in some line $H_i$. Let $A_{i_4}$ be the remaining point of the set $A_{4,2}$ and let $\mathcal{C}_4$ be the degenerate quintic consisting of the three lines $A_{i_j}A_{i_4}$ ($1\leq j\leq 3$) and of the line $H_i$ with multiplicity $2$. Since $\mathcal{C}_4$ passes through all points in $A_{4,2}$ with multiplicity at least $3$, the inequality \eqref{k,d relation 1} is satisfied. By using \eqref{-last-estimate1}, we get a contradiction.

Now we assume that any subset of $A_{4,2}$ containing three points is not collinear. Let $E_{i_j}=\{\mathsf{e}_{i_j}=0\}$ ($1\leq i\leq 4$, $1\leq j\leq2$) be the eight conics passing through all points of $A_{4,2}$, tangent to the line $H_{i_j}$ at the point $A_i$ ($1\leq i\leq 4$, $1\leq j\leq2$). Let $\mathcal{E}=\{\mathsf{e}=0\}$ be the degenerate curve of degree $16$ consisting of all these $E_{i_j}$. We claim that $f$ does not land in $\mathcal{E}$. Otherwise, it lands in some conic $E_{i_j}$. Since the number of intersection points between $E_{i_j}$ and $\cup_{i=1}^9\, H_i\setminus A_{4,2}$ is $>3$ and since any complement of three distinct points in an irreducible curve is hyperbolic, $f$ must be constant, which is a contradiction.

Letting $z$ be a point in $f^{-1}(A_i)$, we have 
\[
\ord_z(\mathsf{e}_{i_j}\circ f)
\,
\geq
\,
1 \quad\quad {\scriptstyle(1\,\leq\,i\,\leq\, 4\,,\,1\,\leq \, j\,\leq\, 2)}.
\]
By the construction of $E_{i_j}$, if $\ord_z(h_{i_j}\circ f)\geq 2$ for some $1\leq j\leq 2$, then we also have $\ord_z(\mathsf{e}_{i_j}\circ f)\geq 2$. Furthermore, if $\ord_z(h_{i_j}\circ f)\geq 2$ for all $1\leq j\leq 2$, then $\ord_z(\mathsf{e}_{i_j}\circ f)\geq 2$ for all $1\leq i\leq 4$, $1\leq i\leq 2$. Thus, the following inequality holds:
\begin{align*}
\min
\,
\{\ord_z(h_{i_1})\circ f,2\}
+
\min
\,
\{\ord_z(h_{i_2})\circ f,2\}
&
\,
\leq
\,
\dfrac{1}{3}
\sum_{i=1}^4
\sum_{j=1}^2\, \ord_z(\mathsf{e}_{i_j}\circ f)\\
&
\,
=
\,
\dfrac{1}{3}
\ord_z(\mathsf{e}\circ f)\quad\quad
{\scriptstyle(z\,\in\,f^{-1}(A_i))}.
\end{align*}
This implies
\begin{equation*}
\sum_{i\in I}
N_f^{[2]}(r,H_i)
\,
\leq
\,
\dfrac{1}{3}
\,
N_f(r,\mathcal{E}).
\end{equation*}
We proceed similarly as before to derive a contradiction
\begin{align*} 
6
\,
T_f(r)
&
\,
\leq
\,
\sum_{i=1}^{9}
N_f^{[2]}(r,H_i)
+
S_f(r)\\
&
\,
\leq
\,
\dfrac{1}{3}
\,
N_f(r,\mathcal{E})
+
S_f(r)\\
&
\,
\leq
\,
\dfrac{16}{3}
\,
T_f(r)
+
S_f(r).
\end{align*}
Lemma~\ref{conjecture_P2} is thus proved.
\end{proof}
\centering
\bibliographystyle{plain}
\bibliography{article}
\Addresses
\end{document}